\numberwithin{equation}{section}
\definecolor{cof}{RGB}{219,144,71}
\definecolor{pur}{RGB}{186,146,162}
\definecolor{greeo}{RGB}{91,173,69}
\definecolor{greet}{RGB}{52,111,72}
\definecolor{pur}{RGB}{186,146,162}
\newtheorem{thm}{Theorem}[section]
\newtheorem*{thm*}{Theorem}
\newtheorem{prop}[thm]{Proposition}
\newtheorem{lem}[thm]{Lemma}
\newtheorem{cor}[thm]{Corollary}
\newtheorem{conj}[thm]{Conjecture}
\theoremstyle{definition}
\newtheorem{defn}[thm]{Definition}
\newtheorem{ex}[thm]{Example}
\newtheorem{rmk}[thm]{Remark}
\newcommand{\longhookrightarrow}{\lhook\joinrel\longrightarrow}
\newcommand{\tri}{\ol{\nabla}}
\newcommand{\bV}{\mathbb{V}}
\newcommand{\lifts}{\boxslash}
\newcommand{\cin}{\subseteq}
\newcommand\ol[1]{\ensuremath{\overline{#1}}}
\newcommand\abs[1]{\ensuremath{\lvert#1\rvert}}
\newcommand{\nospace}[1]{\makebox[0pt][l]{\,#1}}
\newcommand{\mcW}{\mathcal{W}}
\newcommand{\mcE}{\mathcal{E}}
\newcommand{\mcT}{\mathcal{T}}
\newcommand{\mcU}{\mathcal{U}}
\newcommand{\mcA}{\mathcal{A}}
\newcommand{\mcV}{\mathcal{V}}
\newcommand{\op}{\operatorname{op}}
\newcommand{\id}{\operatorname{id}}
\newcommand{\sk}{\operatorname{sk}}
\newcommand{\Hom}{\operatorname{Hom}}
\newcommand{\SpHorn}{\operatorname{SpHorn}}
\newcommand{\SpOutHorn}{\operatorname{SpOH}}
\newcommand{\InnHorn}{\operatorname{IH}}
\newcommand{\Set}{\operatorname{Set}}
\newcommand{\sSet}{\operatorname{sSet}}
\newcommand{\Cat}{\operatorname{Cat}}
\newcommand{\Min}{\operatorname{Min}}
\begin{document}

\title{Generalizing quasi-categories via model structures on simplicial sets}

\author[M. Feller]{Matt Feller}

\address{Department of Mathematics, University of Virginia, Charlottesville, VA 22904}

\email{feller@virginia.edu}

\date{\today}

\thanks{The author was partially supported by NSF RTG grant DMS-1839968 and NSF grant DMS-1906281.}

\begin{abstract}
    We use Cisinski's machinery to construct and study model structures on the category of simplicial sets whose classes of fibrant objects generalize quasi-categories. We identify a lifting condition that captures the homotopical behavior of quasi-categories without the algebraic aspects and show that there is a model structure whose fibrant objects are precisely those that satisfy this condition. We also identify a localization of this model structure whose fibrant objects satisfy a ``special horn lifting'' property similar to the one satisfied by quasi-categories. This special horn model structure leads to a conjectural characterization of the bijective-on-0-simplices trivial cofibrations of the Joyal model structure. We also discuss how these model structures all relate to one another and to the minimal model structure.
\end{abstract}

\maketitle

\setcounter{tocdepth}{1}

\tableofcontents

\section{Introduction}

The theory of quasi-categories has proven to be a powerful tool across many areas of mathematics, including algebraic geometry, topology, and beyond. The basic idea of a quasi-category is that it is a simplicial set that behaves like a category ``up to homotopy.'' This paper explores how one can generalize this idea, where we have simplicial sets modeling up-to-homotopy versions of structures that are weaker than categories. Our motivating example of such a structure weaker than categories is the 2-Segal sets of Dyckerhoff-Kapranov \cite{DK} and G\'alvez-Kock-Tonks \cite{GKT:partI}. In a follow-up paper, we define ``quasi-2-Segal sets'' that are up-to-homotopy versions of 2-Segal sets, building on the groundwork laid here.

A foundational result of quasi-category theory is the existence of a corresponding model structure on the category of simplicial sets, called the Joyal model structure \cite{Joyal:theory}. A desirable quality of any generalization of quasi-categories is therefore the existence of a similar associated model structure. Taking this idea to heart, one could say that within simplicial sets the search for robust generalizations of quasi-categories is equivalent to the search for model structures. Hence, the aim of this paper is to dive into the sea of possible model structures and retrieve a few with properties that should prove useful for further study.

\subsection{Model structures on simplicial sets}

The two most prominent model structures on $\sSet$, the category of simplicial sets, are the Kan-Quillen model structure \cite{Quillen} and the Joyal model structure \cite{Joyal:theory}. In both model structures, all objects are cofibrant, so the well-behaved objects are precisely the fibrant objects. In the Kan-Quillen model structure, the fibrant objects are the Kan complexes which provide a model of spaces/$\infty$-groupoids, and the fibrant objects of the Joyal model structure are the quasi-categories which give us a model of $(\infty,1)$-categories. 
These model structures are both examples of \emph{Cisinski model structures} on $\sSet$, meaning that they are cofibrantly generated and their cofibrations are precisely the monomorphisms. The Kan-Quillen model structure is a localization of the Joyal model structure in the sense that it has the same cofibrations and its class of fibrant objects (Kan complexes) is contained in the class of Joyal fibrant objects (quasi-categories). In general, the process of localizing a model structure to another with the same cofibrations and fewer fibrant objects is well understood; see \cite{Hirschhorn}. There are other localizations of the Joyal model structure in the literature, for example in \cite{CL} and in Chapter 9 of \cite{Cisinski:Asterisque}.

By starting with the Joyal model structure and localizing, one ends up with fibrant objects that are quasi-categories with extra structure. If we instead want to do the opposite and generalize the notion of quasi-category, then we want to ``de-localize.'' The goal of this paper is to lay the groundwork for constructing such de-localizations of the Joyal model structure. Our approach is to focus on the homotopical aspects of quasi-categories, constructing various model structures that maintain those aspects but lack a notion of composition. In particular, for morphisms $f$ and $g$ in a quasi-category $Q$, we consider a homotopy from $f$ to $g$ to be given by a 2-simplex
\[
\adjustbox{scale=0.9}{
\begin{tikzcd}[row sep=small]
	x && y \\
	& x
	\arrow["{s_0 x}"', dashed, from=1-1, to=2-2]
	\arrow["f", from=1-1, to=1-3]
	\arrow["g"', from=2-2, to=1-3]
\end{tikzcd}
}
\]
with degenerate edge $0\to 1$ as indicated. We say that $Q$ is \emph{homotopically-behaved}, in the sense that all of the higher invertibility and compositionality we would expect from a good notion of homotopy are satisfied by the 2-simplices of this form, as well as by higher $n$-simplices whose edge $i\to i+1$ for some $0\leq i\leq n-1$ is degenerate. The purpose of this paper is to study Cisinski model structures on $\sSet$ whose fibrant objects are homotopically-behaved, which we call \emph{homotopically-behaved model structures}. Our main result is to construct and describe the homotopically-behaved model structure with the smallest possible class of weak equivalences.

\begin{thm*}\label{thm:main}
There exists a minimal homotopically-behaved model structure on $\sSet$. The fibrant objects in this model structure are the simplicial sets with lifts of certain modified horn inclusions, which we call \emph{$J$-augmented horn inclusions}.
\end{thm*}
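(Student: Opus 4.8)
The plan is to produce the desired model structure as an output of Cisinski's machinery and then argue minimality by comparing classes of anodyne extensions. Equip $\sSet$ with the exact cylinder $X \mapsto X \times J$, where $J$ is the nerve of the free-living isomorphism — the cylinder underlying the Joyal model structure — together with the standard cellular model $\{\partial\Delta^n \hookrightarrow \Delta^n\}_{n\geq 0}$. Let $S$ be the (small) set of $J$-augmented horn inclusions. Cisinski's theorem then yields a Cisinski model structure $\mcM_S$ on $\sSet$ whose cofibrations are the monomorphisms and whose fibrant objects are exactly the simplicial sets with the right lifting property against the saturated class $\mathsf{An}(S)$ of $S$-anodyne extensions relative to this cylinder. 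The theorem reduces to two claims: (1) $\mathrm{Fib}(\mcM_S)$ equals the class of homotopically-behaved simplicial sets, and (2) every homotopically-behaved Cisinski model structure on $\sSet$ has weak equivalences containing $W_{\mcM_S}$.

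First I would prove (1). One inclusion is free: since $S \subseteq \mathsf{An}(S)$, every $\mcM_S$-fibrant object lifts against the $J$-augmented horn inclusions and is therefore homotopically-behaved. The other inclusion is the technical heart. It suffices to show that a simplicial set lifting against all of $S$ automatically lifts against all of $\mathsf{An}(S)$ — equivalently, that the pushout-products of the cylinder endpoint inclusions $\Delta^0 \rightrightarrows J$ with the boundary inclusions $\partial\Delta^n \hookrightarrow \Delta^n$ already lie in the saturation of $S$. This is precisely what the word ``$J$-augmented'' is meant to buy: the modified horns are designed to encode the $J$-interval data alongside the degenerate-edge coherence data, so that lifting against $S$ subsumes the $J$-cylinder anodyne extensions. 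Granting this combinatorial input, $\mathrm{Fib}(\mcM_S)$ is exactly the class of homotopically-behaved simplicial sets, and in particular $\mcM_S$ is a homotopically-behaved model structure.

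Then I would prove (2). Let $\mcM'$ be a homotopically-behaved Cisinski model structure on $\sSet$; its fibrant objects are homotopically-behaved, hence $\mcM_S$-fibrant. I claim $\mathsf{An}(S)$ is contained in the class of $\mcM'$-trivial cofibrations, from which the result follows: that class is saturated, so it is enough to check it on generators, and then, since $W_{\mcM'}$ is a localizer (in Cisinski's sense) containing $\mathsf{An}(S)$ while $W_{\mcM_S}$ is the smallest localizer doing so, we get $W_{\mcM_S} \subseteq W_{\mcM'}$. To see each generator of $\mathsf{An}(S)$ is an $\mcM'$-trivial cofibration: the generating $J$-cylinder maps are $\mcM'$-trivial cofibrations because the homotopically-behaved condition forces $J$ to be a homotopical interval in $\mcM'$ (the degenerate $2$-simplices of the indicated form are exactly what witnesses this), and each genuinely new horn-type map in $S$ admits an explicit mapping-cylinder-style $J$-deformation retraction, which makes it a weak equivalence in any Cisinski model structure for which $J$ is a homotopical cylinder, $\mcM'$ included. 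Finally, $\mcM_S$ is the \emph{unique} minimal homotopically-behaved model structure, since a Cisinski model structure on $\sSet$ has a fixed class of cofibrations (the monomorphisms) and is therefore determined by its class of weak equivalences.

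I expect the main obstacle to be the reverse inclusion in (1): one must pin down the definition of the $J$-augmented horn inclusions carefully enough that lifting against them is at once (a) equivalent to the informal ``homotopically-behaved'' requirement on simplices with a degenerate edge $i \to i+1$, and (b) strong enough to absorb the $J$-cylinder anodyne extensions, so that Cisinski's construction does not secretly impose a smaller class of fibrant objects. Balancing (a) and (b) within a single family of maps, and then carrying out the saturated-class bookkeeping that certifies (b), is where the real work lies; once the explicit $J$-deformation retractions are recorded, the minimality argument in (2) is essentially formal.
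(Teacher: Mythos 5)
Your proposal has the right overall shape (run Cisinski's machinery on a set of augmented horn inclusions, then argue minimality by comparing localizers), but two of its load-bearing steps are respectively deferred and incorrect. First, the entire combinatorial content of the existence claim is the verification that the class generated by your set $S$ is anodyne for your chosen cylinder, and you explicitly grant this ("Granting this combinatorial input"). With the Cartesian cylinder $X\mapsto J\times X$ this is where the construction is hardest: axiom (An1) demands that every $(J\times\partial\Delta[n])\cup(\{\varepsilon\}\times\Delta[n])\hookrightarrow J\times\Delta[n]$ lie in the saturation of $S$, and $J\times\Delta[n]$ has infinitely many nondegenerate simplices to fill. The paper avoids this by replacing $J\times -$ with the \emph{pointwise} cylinder $J\odot -$ (only gluing copies of $J$ over $0$-simplices), for which the filling argument is a finite explicit filtration (Lemma \ref{lem:an1satisfied}). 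Moreover, your generating set is too small for axiom (An2): verifying it forces one to perform horn pushouts along edges that are not $J$-edges but are only forced to be "invertible" by iterated simplicial 2-out-of-3 (the $J$-edges themselves do not satisfy 2-out-of-3 in an arbitrary simplicial set). This is why the paper generates with the \emph{almost}-$J$-augmented horn inclusions, indexed by augmented unordered triangulations, and only afterwards shows (Proposition \ref{prop:almostIbecomesI}) that lifting against the plain $J$-augmented horns already characterizes the fibrant objects.

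Second, your minimality argument fails. You claim each $J$-augmented horn inclusion admits a "$J$-deformation retraction" making it a weak equivalence in any Cisinski model structure for which $J$ is a homotopical interval. But $J\to\ast$ has the right lifting property against all monomorphisms, so it is a weak equivalence in \emph{every} Cisinski model structure; your argument would therefore make every Cisinski model structure homotopically-behaved, which is false (e.g.\ $\Delta[2]^{\ast}_{0\to 1}$ is fibrant in the minimal model structure but lifts against no $J$-augmented $2$-horn inclusion, so that localization is nontrivial). Indeed no simplicial retraction $\Delta[n]^J_{i\to i+1}\to\Lambda^j[n]^J_{i\to i+1}$ exists: already for $n=2$ the would-be image of the $2$-simplex would need two distinct nondegenerate faces while every $2$-simplex of the augmented horn is either degenerate or lies in the glued copy of $J$. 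The correct bridge from the definition of homotopically-behaved (the collapse maps $\Delta[m+1]^{\ast}_{k\to k+1}\to\Delta[m]$ being weak equivalences) to the $J$-augmented horns — which your proposal never actually invokes — goes through their sections, the (generalized) pinched horn inclusions, and a left-properness-plus-2-out-of-3 comparison of pinched with $J$-augmented horns (Lemma \ref{lem:sectionsweakequiv} through Corollary \ref{cor:characterizedhtpicalMS}); that equivalence is what makes the constructed model structure both homotopically-behaved and minimal among such.
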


We state this theorem in more detail as Theorem \ref{thm:summarythm}. The terms \emph{homotopically-behaved} and \emph{$J$-augmented horn inclusion} are defined explicitly in Subsection \ref{sub:behavedandaugmented}.

\subsection{2-Segal motivation}

Our motivation for considering de-localizations of the Joyal model structure is to construct a ``quasi-2-Segal set'' model structure, where the fibrant objects satisfy an up-to-homotopy version of the the 2-Segal condition introduced in \cite{DK} and \cite{GKT:partI}. Recall that the (strict) Segal (or ``1-Segal'') condition encodes unique composition; the simplicial sets satisfying this condition (the ``1-Segal sets'') are equivalent to categories. The simplicial sets satisfying a weakened, up-to-homotopy version of the 1-Segal condition are the quasi-categories. The 2-Segal condition is generalization of the 1-Segal condition that encodes partially-defined, not-necessarily-unique composition which is still associative in a particular sense. It is therefore natural to try to extend this generalization from 1-Segal to 2-Segal to the up-to-homotopy setting, i.e., to look for a robust definition of ``quasi-2-Segal sets.'' A compelling justification for a particular definition would be the existence of a model structure analogous to the Joyal model structure. Since the quasi-2-Segal sets should generalize quasi-categories, such a model structure should be a de-localization of the Joyal model structure. In follow-up work \cite{Feller:q2} we construct such a quasi-2-Segal model structure by localizing our minimal homotopically-behaved model structure with respect to maps that encode the 2-Segal condition.

\subsection{Cisinski's theory and the minimal model structure}

Proving the existence of a model structure from scratch is generally cumbersome and highly technical, but fortunately for our particular situation Cisinski's theory provides a powerful framework for building model structures that requires checking a much more manageable set of conditions. One aspect of this theory is the existence of a minimal model structure, whose class of fibrant objects contains the fibrant objects of every other Cisinski model structure.\footnote{One could also call this ``maximal,'' but we prefer ``minimal'' since the class of weak equivalences is as small as possible, and the fibrant objects have the least structure.} Therefore, one approach to de-localizing the Joyal model structure is to de-localize all the way back to the minimal model structure, and then localize from there.

As we see in a companion paper \cite{Feller:minimal}, we lose a lot by de-localizing all the way down to the minimal model structure. In particular, the main result of that paper is a new characterization of the fibrant objects in the minimal model structure. What we find is that the notion of ``homotopy'' familiar from quasi-categories does not behave well in the fibrant objects of this model structure.
In particular, the existence of a homotopy from $f$ to $g$ given by a 2-simplex with degenerate edge need not imply the existence a homotopy from $g$ to $f$. Homotopies of this form also need not compose.
One can devise an alternative notion of homotopy which behaves well in the minimal model structure, essentially by demanding that all higher invertibility data be present from the start, but then one sacrifices the simplicity that comes from a homotopy being embodied by a single simplex.

\subsection{Homotopically-behaved model structures and augmented horns}\label{sub:behavedandaugmented}

We denote by $\Delta[n]^{\ast}_{i\to i+1}$ the standard $n$-simplex with the edge $i\to (i+1)$ collapsed to a degeneracy. The idea that an $n$-simplex with degenerate $i\to (i+1)$ edge is a homotopy of $(n-1)$-simplices can be expressed by the surjective map $\Delta[n]^{\ast}_{i\to i+1}\to \Delta[n-1]$ being a weak equivalence. Thus, we introduce the terminology \emph{homotopically-behaved} for Cisinski model structures on $\sSet$ where each of these surjective maps $\Delta[n]^{\ast}_{i\to i+1}\to \Delta[n-1]$ is a weak equivalence. In Section \ref{sec:behaved} we construct the minimal homotopically-behaved model structure, whose fibrant objects have the least possible structure while maintaining the desirable homotopical aspects of quasi-categories. Localizing this model structure with respect to the maps from the 2-Segal condition yields a ``quasi-2-Segal set'' model structure whose fibrant objects must then also have the desirable homotopical aspects of quasi-categories.

At the same time, we construct a nontrivial localization of this model structure at $K\to \ast$, where $K$ is the simplicial set one gets by gluing in a left and right inverse to $\Delta[1]$; see Example \ref{ex:K}. Although this model structure does not appear to be directly useful for defining quasi-2-Segal sets, it may be of independent interest in understanding the broader picture of model structures on simplicial sets.

The key inspiration for our approach comes from the special outer horn extension property of quasi-categories. Recall that a \emph{horn} $\Lambda^i[n]$ is the union of all of the faces of the $n$-simplex $\Delta[n]$ except for $d_i\Delta[n]$, which we say is \emph{inner} if $0<i<n$ and is \emph{outer} if $i=0$ or $i=n$. We refer to these horns as \emph{ordinary horns} to distinguish them from the augmented horns we introduce below. A \emph{quasi-category} is a simplicial set $Q$ such that every inner horn $\Lambda^i[n]\to Q$ extends to an $n$-simplex $\Delta[n]\to Q$. A quasi-category need not have extensions of an outer horn such as $\Lambda^0[n]\to Q$. However, if the edge $0\to 1$ is sent to an edge in $Q$ that is invertible in a certain sense, then we say $\Lambda^0[n]\to Q$ is an example of a \emph{special outer horn}, and it turns out that quasi-categories do have extensions of all special outer horns. 

In a general homotopically-behaved model structure, the fibrant objects need not have extensions of ordinary horns. However, the central idea of our approach is to create augmented horns, where we glue a simplicial set onto a particular edge to ``invert'' it. In Section \ref{sec:augmentation}, we see how certain augmented horn inclusions are forced to be weak equivalences in a homotopically-behaved model structure. Furthermore, we can characterize the fibrant objects in the minimal homotopically-behaved model structure in terms of lifts of $J$-augmented horn inclusions, as we stated in Theorem \ref{thm:main}. We denote by $J$ the nerve of the free-living isomorphism, so and define \emph{$J$-augmented horn inclusions} to be ordinary horn inclusions $\Lambda^j[n]\hookrightarrow \Delta[n]$ with a copy of $J$ glued in along either the $(j-1)\to j$ edge or the $j\to (j+1)$ edge. We see in Corollary \ref{cor:KbehavedlocalizestoJoyal} that the minimal homotopically-behaved model structure localizes to the Joyal model structure.

\subsection{Special horns}

The notion of invertibility in Section \ref{sec:augmentation} comes from attaching a simplicial set $I$ that is contractible in the sense that the map $I\to \ast$ is a weak equivalence, with our main examples being $I=J$ and $I=K$. In Section \ref{sec:special}, we see that this notion does not account for all of the edges we want to consider invertible in the context of quasi-categories, and identify a separate class of augmented horn inclusions which we call the \emph{special horn inclusions}. We show that there is a model structure whose fibrant objects are precisely the simplicial sets with special horn inclusions.

Recall that we deemed the localization of the minimal homotopically-behaved model structure at $K\to \ast$ to be unsuitable for our ultimate purposes in defining quasi-2-Segal sets. This special horn model structure is a further localization, and hence is also not suitable. However, this model structure may be of interest with regards to studying the Joyal model structure. In particular, we conjecture that the special horn inclusions, together with the inner horn inclusions, generate the class of bijective-on-0-simplices trivial cofibrations in the Joyal model structure; see Conjecture \ref{conj:joyalbijon0simp}.

\subsection{Pointwise cylinders}

A central element of Cisinski's theory is the \emph{exact cylinder}, which is a functorial choice of simplicial set $E\otimes X$ for each simplicial set $X$, satisfying certain axioms. Many applications of Cisinski's work use an exact cylinder given by the Cartesian product $E\otimes X=I\times X$ for some simplicial set $I$ with distinct vertices $\{0\}\hookrightarrow I$ and $\{1\}\hookrightarrow I$. For our purposes, the necessary proofs are greatly simplified by instead using an alternative kind of exact cylinder, which we call a \emph{pointwise cylinder}. We introduce pointwise cylinders in Subsection \ref{sub:pointwise}. In Subsection \ref{sub:augMS}, we see that the minimal homotopically-behaved model structure is also minimal with respect to pointwise cylinders, in the sense that we construct it using a pointwise cylinder in Cisinski's machinery and any other such constructed model structure is a localization of it.

\subsection{Organization}

In Section \ref{sec:background}, we cover basic definitions and notation, and then summarize Cisinski's theory. In Section \ref{sec:augmentation}, we define homotopically-behaved model structures and augmented horn inclusions. In Section \ref{sec:behaved} we show that there is a minimal homotopically-behaved model structure and that its fibrant objects are the simplicial sets with extensions of certain augmented horns. In Section \ref{sec:special}, we define \emph{special horns} as a separate kind of augmented horn, and use Cisinski's machinery to show that there exists a model structure whose fibrant objects are simplicial sets with special horn extensions. In Section \ref{sec:comparison} we summarize and compare the various model structures constructed in this work.

\subsection{Acknowledgement}

I would like to thank Julie Bergner for her extraordinarily helpful feedback.

\section{Background}\label{sec:background}

We recall some basic notions, as well as the necessary aspects of Cisinski's theory.

\subsection{Basics of simplicial sets and model structures}

Let $\Delta$ denote the category whose objects are the finite non-empty ordered sets $[n]=\{0\leq 1\leq \ldots\leq n\}$ for $n\geq 0$ and whose morphisms are order-preserving maps. We write $d^i\colon [n]\to [n+1]$ and $s^i\colon [n+1]\to [n]$ for the co-face and co-degeneracy maps, respectively, which generate the morphisms of $\Delta$. A \emph{simplicial set} is a functor $\Delta^{\op}\to \Set$. We denote the category of simplicial sets by $\sSet$ and the representable simplicial sets by $\Delta[n]$, except that we often denote $\Delta[0]$ instead by $\ast$ since it is the terminal object in $\sSet$. 
Our notation for the $i$th horn of $\Delta[n]$ is $\Lambda^i[n]$. For more background on simplicial sets, see \cite{GJ}.
We write $f\lifts g$ if $g$ has the right lifting property with respect to $f$.
The class of morphisms with the right lifting property with respect to a set of maps $\mathcal{A}$ is denoted by $\mathcal{A}^{\lifts}$, and the class of morphisms $f$ such that $f\lifts \mathcal{B}$ is denoted by $^{\lifts}\mathcal{B}$. Given a set $S$ of morphisms, the class $^{\lifts}(S^{\lifts})$ is the closure of $S$ under taking pushouts, transfinite compositions, and retracts. For this reason, we sometimes say that $S$ \emph{generates} the class $^{\lifts}(S^{\lifts})$.


We restrict our focus to \emph{Cisinski} model structures on $\sSet$, which are cofibrantly generated model structures whose cofibrations are precisely the monomorphisms. A model structure is \emph{cofibrantly generated} if there are sets $\mathcal{I}$ and $\mathcal{J}$ such that $\mathcal{I}$ generates the cofibrations and $\mathcal{J}$ generates the trivial cofibrations.
For more background on model categories, see \cite{Hirschhorn} or \cite{Hovey}.

\subsection{Cisinski's theory}

The main result we use from Cisinski is Theorem \ref{thm:cisinskiMS}, which says that if a set of monomorphisms $\Lambda$ satisfies certain properties, then $\Lambda$ characterizes the fibrant objects of some model category via a lifting property. The purpose of this subsection is to recall the background necessary to state this result precisely.

We begin by recalling the notion of a cylinder.

\begin{defn} \cite[Def.~2.4.6]{Cisinski:Cambridge}
A \emph{cylinder} of a simplicial set $X$ is a factorization
\[
\begin{tikzcd}
X\sqcup X \arrow[r, "{(\partial_0,\partial_1)}", hook] & I\otimes X \arrow[r] & X
\end{tikzcd}
\]
of the canonical fold map $(\id_X, \id_X)$, where the first map is a monomorphism. The maps $\partial_{\varepsilon}$ for $\varepsilon=0,1$ pick out copies of $X$ that do not intersect inside of $I\otimes X$.
\end{defn}

Beware that the notation $I\otimes X$ in the above definition is purely formal. The simplicial set $I\otimes X$ need not be a monoidal product or tensor of any kind.

\begin{rmk}
In an arbitrary model category, we define cylinders similarly, where the first map is required to be a cofibration. In the context of Cisinski's theory, all of the model structures we consider have as cofibrations precisely the class of monomorphisms, justifying this definition.
\end{rmk}

A \emph{functorial cylinder} is a compatible choice of cylinder for every simplicial set. To make this definition rigorous, we first notice that $X\mapsto X\sqcup X$ and $X\mapsto X$ are endofunctors of $\sSet$, which we denote by $1\sqcup 1$ and $1$, respectively. There is a natural transformation $(\id,\id)\colon 1\sqcup 1\Longrightarrow 1$ whose component at each $X$ is the canonical fold map $(\id_X,\id_X)$.

\begin{defn}\cite[Def.~2.4.8]{Cisinski:Cambridge}
A \emph{functorial cylinder} is a factorization
\[
\begin{tikzcd}
1\sqcup 1 \arrow[r, "{(\partial_0,\partial_1)}", Rightarrow] & I\otimes - \arrow[r, Rightarrow] & 1
\end{tikzcd}
\]
of the natural transformation $(\id, \id)$, where each component of the natural transformation $(\partial_0,\partial_1)\colon 1\sqcup 1\Longrightarrow I\otimes -$ is a monomorphism.
\end{defn}

The motivation behind these definitions is to generalize the idea (from the Kan-Quillen model structure on $\sSet$) of $\Delta[1]\times X$ being a cylinder of a simplicial set $X$, in the sense that a map $X\times \Delta[1]\to Y$ gives a homotopy of maps $X\to Y$. Imposing some further conditions helps maintain the spirit of the original setting, where we imagine $I\otimes X$ as something like a stretched out copy of $X$.

\begin{defn}\label{def:exactcylinder}\cite[Def.~2.4.8]{Cisinski:Cambridge}
An \emph{exact cylinder}\footnote{The origin of this definition is \cite{Cisinski:Asterisque}, where the term is ``\emph{donn\'ee homotopique \'el\'ementaire},'' hence ``DH.''} is a functorial cylinder satisfying the following axioms.
\begin{enumerate}[start=1,label={(DH\arabic*).\ }, widest=(An2$'$.), leftmargin=*]
\item The functor $I\otimes -$ commutes with small colimits and preserves monomorphisms.
\item For any monomorphism of simplicial sets $j\colon A\hookrightarrow B$, the square
\[
\begin{tikzcd}
A \arrow[r, "j", hook] \arrow[d, "(\partial_{\varepsilon})_A"'] & B \arrow[d, "(\partial_{\varepsilon})_B"] \\
I\otimes A \arrow[r, "I\otimes j"']                             & I\otimes B                               
\end{tikzcd}
\]
is a pullback for each $\varepsilon = 0,1$.
\end{enumerate}
\end{defn}

\begin{rmk}\label{rmk:exactpullback}
Assuming $I\otimes -$ preserves monomorphisms (as (DH1) calls for), all of the morphisms in the square in (DH2) are monomorphisms. We can interpret the condition that the square be a pullback as saying that the intersection of $I\otimes A\cin I\otimes B$ with $B\cin I\otimes B$ is precisely $A$.
\end{rmk}

\begin{ex}\label{ex:exact}
Let $I$ be a cylinder of the terminal simplicial set $\Delta[0]$. In other words, choose a monomorphism of simplicial sets $\Delta[0]\sqcup \Delta[0]\hookrightarrow I$. The functor defined by $I\otimes X= I\times X$ determines an exact cylinder. In particular, when $I=\Delta[1]$, we recover the familiar notion of cylinder from the Kan-Quillen model structure.

Many applications of this theory involve cylinders defined by Cartesian product, but our approach uses a new kind of exact cylinder which we introduce in Subsection \ref{sub:pointwise}.
\end{ex}

Let us follow Remark 2.4.9 in \cite{Cisinski:Cambridge} and establish some more notation. Given an exact cylinder $I\otimes -$, by taking the pushout of the left and top maps in the square for axiom (DH2), we get the inclusion map
\[
(I\otimes A) \cup B \longhookrightarrow I\otimes B.
\]
To emphasize the dependence of this inclusion on whether $\varepsilon=0$ or 1 in the inclusion $\partial_{\varepsilon}\colon \Delta[0]\hookrightarrow I$, we rewrite this map as
\[
(I\otimes A) \cup (\{\varepsilon\} \otimes B) \longhookrightarrow I\otimes B,
\]
thinking of $\{0\}$ and $\{1\}$ as endpoints of $I=I\otimes \Delta[0]$.

In this spirit, we also write $\partial I$ for the union of $\{0\}$ and $\{1\}$ inside of $I$, and we write $\partial I\otimes X$ as the union of $\{0\}\otimes X$ and $\{1\}\otimes X$ in $I\otimes X$. Then we have a canonical inclusion
\[
(I\otimes A) \cup (\partial I \otimes B) \longhookrightarrow I\otimes B,
\]
arising from a diagram akin to the square in axiom (DH2).

We are now ready for one of the key definitions we need from Cisinski.

\begin{defn}\cite[Def.~2.4.11]{Cisinski:Cambridge}\label{def:anodyneclass}
Given an exact cylinder $I\otimes -$, we say that a class of morphisms ${}^\lifts(\Lambda^\lifts)$ generated by a set $\Lambda$ of monomorphisms is an \emph{$(I\otimes -)$-anodyne class} if the following conditions hold.
\begin{enumerate}[start=1,label={(An\arabic*).\ }, widest=(An2$'$.), leftmargin=*]
\item For each monomorphism of simplicial sets $X\hookrightarrow Y$ and $\varepsilon=0,1$, the induced map $(I\otimes X)\cup (\{\varepsilon\}\otimes Y)\hookrightarrow I\otimes Y$ is in ${}^\lifts(\Lambda^\lifts)$.
\item For each $A\hookrightarrow B$ in ${}^\lifts(\Lambda^\lifts)$, the induced map $(I\otimes A)\cup (\partial I \otimes B)\to I\otimes B$ is also in ${}^\lifts(\Lambda^\lifts)$.
\end{enumerate}
\end{defn}

We can restate each of axioms (An1) and (An2) in a form that is easier to check.

\begin{lem}\label{lem:altaxioms}
Let $I\otimes-$ be an exact cylinder and let $\Lambda$ be a set of monomorphisms. Then axiom \emph{(An1)} is equivalent to \emph{(An1$'$)} below and axiom \emph{(An2)} is equivalent to \emph{(An2$'$)} below.
\end{lem}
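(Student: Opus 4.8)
The plan is to treat both equivalences with a single device. Each primed axiom is the restriction of its unprimed counterpart to a generating set --- boundary inclusions $\partial\Delta[n]\hookrightarrow\Delta[n]$ for (An1$'$), and the members of $\Lambda$ for (An2$'$) --- so the implications (An1)$\Rightarrow$(An1$'$) and (An2)$\Rightarrow$(An2$'$) are immediate and the content lies in the converses. The common engine is: a class of monomorphisms cut out by a ``Leibniz-product-lands-in-the-anodyne-class'' condition is weakly saturated (closed under pushout, transfinite composition, and retract), so a closure property verified on a set of generators propagates automatically to the weakly saturated class it generates.

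For (An1$'$)$\Rightarrow$(An1), fix $\varepsilon\in\{0,1\}$ and let $\mathcal{C}_\varepsilon$ be the class of monomorphisms $u\colon X\hookrightarrow Y$ such that $(I\otimes X)\cup(\{\varepsilon\}\otimes Y)\hookrightarrow I\otimes Y$ lies in ${}^{\lifts}(\Lambda^{\lifts})$. The first step is to check that $\mathcal{C}_\varepsilon$ is weakly saturated. Here axiom (DH1) does the work: since $I\otimes-$ commutes with small colimits and preserves monomorphisms, the assignment $u\mapsto\big((I\otimes X)\cup(\{\varepsilon\}\otimes Y)\hookrightarrow I\otimes Y\big)$ carries pushout squares of monomorphisms to pushout squares, transfinite towers to transfinite towers, and retract diagrams to retract diagrams; axiom (DH2), together with the discussion preceding Definition \ref{def:anodyneclass}, guarantees that the relevant unions are honest subobjects of $I\otimes Y$ computed as pushouts $(\{\varepsilon\}\otimes Y)\cup_{\{\varepsilon\}\otimes X}(I\otimes X)$. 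As ${}^{\lifts}(\Lambda^{\lifts})$ is itself weakly saturated, $\mathcal{C}_\varepsilon$ inherits the property. By (An1$'$), $\mathcal{C}_\varepsilon$ contains every boundary inclusion $\partial\Delta[n]\hookrightarrow\Delta[n]$, and these generate all monomorphisms of simplicial sets under weak saturation (via the relative skeletal filtration), so $\mathcal{C}_\varepsilon$ contains every monomorphism. Since $\varepsilon$ was arbitrary, this gives (An1).

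For (An2$'$)$\Rightarrow$(An2), let $\mathcal{D}$ be the class of monomorphisms $u\colon A\hookrightarrow B$ such that $(I\otimes A)\cup(\partial I\otimes B)\hookrightarrow I\otimes B$ lies in ${}^{\lifts}(\Lambda^{\lifts})$, and note that $\partial I\otimes-$ is the coproduct functor $(-)\sqcup(-)$ since $\{0\}$ and $\{1\}$ are disjoint in $I$. The same colimit-preservation argument, applied now to the pushout-product against $\partial I\hookrightarrow I$, shows $\mathcal{D}$ is weakly saturated. Hypothesis (An2$'$) gives $\Lambda\subseteq\mathcal{D}$; since ${}^{\lifts}(\Lambda^{\lifts})$ is by definition the smallest weakly saturated class containing $\Lambda$, we get ${}^{\lifts}(\Lambda^{\lifts})\subseteq\mathcal{D}$, and because every member of ${}^{\lifts}(\Lambda^{\lifts})$ is a monomorphism (monomorphisms form a weakly saturated class and $\Lambda$ consists of monomorphisms), this is precisely (An2).

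The step I expect to be the main obstacle is showing that $\mathcal{C}_\varepsilon$ and $\mathcal{D}$ are closed under pushout: one must verify that the Leibniz construction sends a pushout square of monomorphisms to a pushout square, a pasting-of-pushouts diagram chase that leans on $I\otimes-$ preserving colimits and monomorphisms and on (DH2) to identify $(I\otimes A)\cup(\{\varepsilon\}\otimes B)$ and $(I\otimes A)\cup(\partial I\otimes B)$ with the pushouts $(\{\varepsilon\}\otimes B)\cup_{\{\varepsilon\}\otimes A}(I\otimes A)$ and $(\partial I\otimes B)\cup_{\partial I\otimes A}(I\otimes A)$ inside $I\otimes B$. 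Closure under transfinite composition and under retracts, and the skeletal-filtration fact that the boundary inclusions generate all monomorphisms, are then routine.
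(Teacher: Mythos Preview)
Your proof is correct and follows essentially the same strategy as the paper's: both rest on the fact that the Leibniz (pushout-product) construction against $\{\varepsilon\}\hookrightarrow I$ (resp.\ $\partial I\hookrightarrow I$) carries weakly saturated classes to weakly saturated classes, so verifying (An1) and (An2) on generating sets suffices. The paper's version is terser, delegating this key step to correspondence~(2.4.13.4) of Example~2.4.13 in \cite{Cisinski:Cambridge} rather than unpacking the weak-saturation argument as you do.
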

\begin{enumerate}[start=1,label={(An\arabic*$'$).\ }, widest=(An2$'$.), leftmargin=*]
\item For each $n\geq 0$ and $\varepsilon=0,1$, the map $(I\otimes \partial \Delta[n])\cup (\{\varepsilon\}\otimes \Delta[n])\hookrightarrow I\otimes \Delta[n]$ induced by $\partial\Delta[n]\hookrightarrow \Delta[n]$ is in ${}^\lifts(\Lambda^\lifts)$.
\item For each $A\hookrightarrow B$ in $\Lambda$, the induced map $(I\otimes A)\cup (\partial I \otimes B)\to I\otimes B$ is in ${}^\lifts(\Lambda^\lifts)$.
\end{enumerate}
\begin{proof}
The equivalence (An1$'$)$\iff$(An1) follows from the equality of classes
\[
\begin{split}
&\{(I\otimes \partial\Delta[n])\cup (\{\varepsilon\}\otimes \Delta[n])\hookrightarrow I\otimes \Delta[n]\mid n\geq 0, \ \varepsilon=0,1\}^{\lifts}\\
=\ &\{(I\otimes X)\cup (\{\varepsilon\}\otimes Y)\hookrightarrow I\otimes Y\mid X\hookrightarrow Y\text{ in }\sSet,\ \varepsilon=0,1\}^{\lifts},
\end{split}
\]
which is a consequence of correspondence (2.4.13.4) of Example 2.4.13 in \cite{Cisinski:Cambridge} which ultimately relies on the fact that the boundary inclusions generate the class of monomorphisms. The equivalence of (An2$'$) and (An2) follows from a similar argument, replacing $\{\varepsilon\}$ with $\partial I$ and arbitrary monomorphisms $X\hookrightarrow Y$ with maps in ${}^\lifts(\Lambda^\lifts)$.
\end{proof}

\begin{defn}
Given an exact cylinder $I\otimes -$ and a morphism of simplicial sets $f_0,f_1\colon A\to X$, we say that an \emph{$I$-homotopy} from $f_0$ to $f_1$ is a map $h\colon I\otimes A\to X$ such that precomposing $h$ with $\{\varepsilon\}\otimes A\hookrightarrow I\otimes A$ yields $f_{\varepsilon}$ for $\varepsilon=0,1$. We say that $f,g\colon A\to X$ are \emph{$I$-homotopic} if there is a finite zigzag of $I$-homotopies from $f$ to $g$. Suppressing the dependence on $I$, we let $[A,X]$ denote the quotient of the set $\Hom(A,X)$ by identifying maps that are $I$-homotopic.
\end{defn}

\begin{thm}\label{thm:cisinskiMS}\cite[Thm.~2.4.19]{Cisinski:Cambridge}
Given an exact cylinder $I\otimes -$ and a set of monomorphisms $\Lambda$ such that ${}^\lifts(\Lambda^\lifts)$ is an $(I\otimes-)$-anodyne class, there is a cofibrantly generated model structure on $\sSet$ whose cofibrations are the monomorphisms and whose fibrant objects are the simplicial sets with the right lifting property with respect to $\Lambda$. The weak equivalences in this model structure are maps $X\to Y$ such that for every fibrant $W$ the induced map $[Y,W]\to [X,W]$ is a bijection.
\end{thm}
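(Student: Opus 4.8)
The plan is to run the standard argument through Cisinski's machinery, organized around two applications of the small object argument and the homotopy calculus supplied by the exact cylinder. First I would fix terminology: call a map a \emph{naive fibration} if it has the right lifting property against $\Lambda$ (equivalently, against the whole anodyne class ${}^{\lifts}(\Lambda^{\lifts})$, these having the same class of right-lifting maps), and let the fibrant objects be those $W$ with $W\to\ast$ a naive fibration. The small object argument applied to $\Lambda$ produces a functorial factorization of every map as an anodyne map followed by a naive fibration; applied to the set of boundary inclusions $\{\partial\Delta[n]\longhookrightarrow\Delta[n]\mid n\geq 0\}$ it produces a functorial factorization as a monomorphism followed by a map with the right lifting property against all monomorphisms, which I will call a \emph{trivial fibration}. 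In particular every $X$ receives a functorial fibrant replacement $X\longhookrightarrow RX$ which is anodyne.

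Next I would set up the homotopy theory of the cylinder. Using the cylinder axioms (DH1), (DH2) and the anodyne axioms (An1), (An2), one checks in the usual way that $[-,W]\colon\sSet^{\op}\to\Set$ is a functor and that $I$-homotopic maps induce equal maps on it; axiom (An1) is exactly the homotopy extension property into fibrant targets. From this one deduces that every anodyne map $A\longhookrightarrow B$ induces a bijection $[B,W]\to[A,W]$ for fibrant $W$ --- surjectivity by lifting against $W\to\ast$, injectivity by the cylinder argument using (An2) --- so anodyne maps are weak equivalences; since $\Lambda$ consists of monomorphisms and the monomorphisms are saturated, anodyne maps are trivial cofibrations. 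Two-out-of-three for weak equivalences is immediate from the definition via $[-,W]$, and one checks directly that a trivial fibration is a weak equivalence (it has a section which is an $I$-homotopy inverse, built from the cylinder), hence that a map is a weak equivalence if and only if its fibrant replacement is.

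The crux is the identity that the cofibrations which are weak equivalences are exactly the anodyne maps. The inclusion ``$\supseteq$'' was noted above; for ``$\subseteq$'', given such a $j$ one factors $j$ as an anodyne map followed by a naive fibration, notes by two-out-of-three that this naive fibration is again a weak equivalence, and then --- \emph{provided one knows that a naive fibration which is a weak equivalence is a trivial fibration} --- exhibits $j$ as a retract of the anodyne map, so $j$ is anodyne. Everything therefore reduces to that provision, which I would prove in two stages: first, a naive fibration between \emph{fibrant} objects that is a weak equivalence is an $I$-homotopy equivalence, and a naive fibration which is an $I$-homotopy equivalence has the right lifting property against all monomorphisms (lift up to $I$-homotopy, then rigidify the lift using (An1) and (An2)); second, reduce the general case to the fibrant case using fibrant replacement, the factorization lemmas, and a retract argument. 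Granting the identity, the remaining model-category axioms follow formally from the two functorial factorizations and the retract closure of the relevant classes, with the fibrations equal to the naive fibrations and the fibrant objects equal to the objects with the right lifting property against $\Lambda$; the description of the weak equivalences in the statement is then just the definition we began with. The step I expect to be genuinely hard is precisely this ``naive fibration $+$ weak equivalence $\Rightarrow$ trivial fibration'' lemma: all of the honest homotopy-theoretic content sits in the fibrant case and in the bookkeeping needed to compatibly reduce the general case to it.
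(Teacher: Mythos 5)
The paper does not prove this theorem; it is quoted verbatim from \cite[Thm.~2.4.19]{Cisinski:Cambridge}, so your attempt has to be measured against Cisinski's argument.

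Much of your setup is correct and matches the standard development: the two small-object-argument factorizations, the homotopy calculus of $[-,W]$, the fact that anodyne maps and maps with the right lifting property against all monomorphisms are weak equivalences, and the fibrant-object case of your key lemma. The fatal problem is the crux you isolate. The identity ``cofibration $\cap$ weak equivalence $=$ anodyne'' --- equivalently ``fibration $=$ naive fibration,'' equivalently your provision ``naive fibration $+$ weak equivalence $\Rightarrow$ trivial fibration'' --- is \emph{false} in general, and the theorem does not assert it. What Cisinski proves is that a naive fibration with \emph{fibrant codomain} is a trivial fibration if and only if it is a weak equivalence; your proposed ``second stage,'' reducing the general case to the fibrant case by fibrant replacement and a retract argument, cannot be carried out, because the general statement fails. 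The class ${}^{\lifts}(\Lambda^{\lifts})$ is in general a \emph{proper} subclass of the trivial cofibrations, so the $(\text{anodyne},\text{naive fibration})$ factorization does not serve as the $(\text{trivial cofibration},\text{fibration})$ factorization, and the lifting axiom for trivial cofibrations against your ``fibrations'' is exactly the false claim. You can see this tension inside the present paper: the remark following Proposition \ref{prop:JandKhtpyMS} stresses that the construction yields \emph{no} explicit description of the fibrations (on your account they would simply be $\Lambda^{\lifts}$), and Conjecture \ref{conj:joyalbijon0simp} together with Campbell's counterexample \cite{Campbell} shows that even for the Joyal model structure the question of whether a natural horn-generated saturated class exhausts the (bijective-on-$0$-simplices) trivial cofibrations is delicate and open.

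The actual proof therefore has to produce the trivial-cofibration/fibration factorization without identifying the trivial cofibrations with the anodyne maps. In \cite{Cisinski:Cambridge} this is done by verifying the hypotheses of a recognition theorem of Smith type: the class $W$ of $[-,W']$-bijections is accessible, satisfies two-out-of-three and retract closure, contains the maps with the right lifting property against all monomorphisms, and --- this is where the real homotopy-theoretic work sits, using the anodyne axioms and the fibrant-codomain case of your lemma --- the class of monomorphisms in $W$ is stable under pushout and transfinite composition. The generating set of trivial cofibrations is then produced abstractly and is not $\Lambda$. (In \cite{Cisinski:Asterisque} the same issue is handled by transfinitely completing the anodyne class.) So your sketch is sound up to, but not including, the point where you commit to the fibrations being the naive fibrations; that commitment is where the argument breaks.
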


\begin{rmk}\label{rmk:MSdeterminedbyfibobj}
It is a theorem of Joyal that if two model structures share the same cofibrations and fibrant objects, then they are the same model structure; see Proposition E.1.10 in \cite{Joyal:theory}. Therefore, the weak equivalences described in the above theorem are determined once we know our cofibrations are the monomorphisms and our fibrant objects are those with lifts against $\Lambda$. Throughout this paper, we shall implicitly use this fact from Joyal to conclude that when the class of fibrant objects of a Cisinski model structure is contained in the class of fibrant objects in another, the class of weak equivalences of the former model contains the class of weak equivalences of the latter.
\end{rmk}

\section{Homotopically-behaved model structures and augmented horns}\label{sec:augmentation}

In this section, we define \emph{homotopically-behaved} model structures to be Cisinski model structures where the retract map from an $n$-simplex with $i\to (i+1)$ edge collapsed to a degeneracy onto the $(n-1)$-simplex is a weak equivalence. This condition captures the idea that a map out of an $n$-simplex with degenerate $i\to (i+1)$ edge is a homotopy of $(n-1)$-simplices. We show that this condition is equivalent to the condition that certain modified horn inclusions are weak equivalences.

\subsection{Augmented horn extensions}

In a quasi-category, we can view higher homotopies as simplices with a degenerate edge $i\to i+1$. More precisely, for $n\geq 1$, a homotopy between $n$-simplices $x,y$ consists of an $(n+1)$-simplex $H$ with some $0\leq i\leq n$ such that the edge $i\to (i+1)$ is degenerate and $\{d_i H, d_{i+1} H\}=\{x,y\}$.

Before continuing this discussion, let us fix some notation.
\begin{defn}
Given $n\geq 2$, $0\leq i\leq n-1$, a subcomplex $A\cin \Delta[n]$ containing the $i\to (i+1)$ edge of $\Delta[n]$, and a map $\Delta[1]\to X$, let $A^{X}_{i\to i+1}$ be the pushout
\[
\begin{tikzcd}
{\Delta[1]} \arrow[d, "i\to i+1"'] \arrow[r] & X \arrow[d]      \\
A \arrow[r]                                  & A^{X}_{i\to i+1}\nospace{.}
\end{tikzcd}
\]
\end{defn}

\begin{ex}
To get $\Lambda^1[2]^{X}_{1\to 2}$, we attach $\Delta[1]\to X$ along the edge $1\to 2$, as in the following diagram
\[
\begin{tikzcd}[row sep=small]
	& 1 \\
	0 && 2\nospace{.}
	\arrow[from=2-1, to=1-2]
	\arrow[color={rgb,255:red,214;green,92;blue,92}, from=1-2, to=2-3]
	\arrow["X"', color={rgb,255:red,214;green,92;blue,92}, curve={height=-25pt}, dotted, no head, from=1-2, to=2-3]
\end{tikzcd}
\]
\end{ex}

Our main use for these $A^X_{i\to i+1}$ is to discuss modified versions of the ordinary horn inclusions. Given a horn inclusion $\Lambda^j[n]\hookrightarrow \Delta[n]$, the horn $\Lambda^j[n]$ contains the edges $(j-1)\to j$ and $j\to (j+1)$ (excluding the cases $-1\to 0$ and $n\to n+1$ as there are no such edges). For each $\Delta[1]\to X$ and $i=j-1, j$, we get an induced inclusion $\Lambda^j[n]^{X}_{i\to i+1}\hookrightarrow \Delta[n]^{X}_{i\to i+1}$. The modified horn inclusions we study are two special cases of this situation. The first case is when $X=\ast$, so the induced inclusion $\Lambda^j[n]^{\ast}_{i\to i+1}\hookrightarrow \Delta[n]^{\ast}_{i\to i+1}$ is the ordinary horn inclusion with the $i\to i+1$ edge collapsed to a degeneracy. The second case is when $\Delta[1]\to X$ is an inclusion, so that the induced map is the ordinary horn inclusion with a copy of $X$ glued in along the $i\to i+1$ edge. Let us set some terminology for these special cases.

\begin{defn}\label{def:pinchedandaugmentedhorns} Fix $n\geq 2$ and $0\leq i\leq n-1$.

\begin{enumerate}
    \item When $X=\ast$, the terminal simplicial set, we say that $\Lambda^j[n]^{\ast}_{i\to i+1}$ is a \emph{pinched horn} for $j=i, i+1$, that $\Delta[n]^{\ast}_{i\to i+1}$ is a \emph{pinched $n$-simplex}, and that the inclusion $\Lambda^j[n]^{\ast}_{i\to i+1}\hookrightarrow \Delta[n]^{\ast}_{i\to i+1}$ for $j=i, i+1$ is a \emph{pinched horn inclusion}.
    \item When $\Delta[1]\to X$ is an inclusion, we say that $\Lambda^j[n]^{X}_{i\to i+1}$ is an \emph{$X$-augmented horn} for $j=i, i+1$, that $\Delta[n]^{X}_{i\to i+1}$ is an \emph{$X$-augmented $n$-simplex}, and that the inclusion $\Lambda^j[n]^{X}_{i\to i+1}\hookrightarrow \Delta[n]^{X}_{i\to i+1}$ for $j=i, i+1$ is an \emph{$X$-augmented horn inclusion}.
\end{enumerate}
\end{defn}

\begin{rmk}
The notation $A^{X}_{i\to i+1}$ and terminology ``$X$-augmented'' are technically ambiguous because they depend on the map $\Delta[1]\to X$, but the choice of map should be clear from context in all instances in this paper.
\end{rmk}

Since we think of maps out of a pinched $(n+1)$-simplex $\Delta[n+1]^{\ast}_{i\to i+1}$ as homotopies of $n$-simplices, we can interpret this situation as saying that maps out of a pinched $(n+1)$-simplex $\Delta[n+1]^{\ast}_{i\to i+1}$ are equivalent to maps out of the standard $n$ simplex $\Delta[n]$ up to homotopy. This interpretation is encoded more precisely by the surjective map $\Delta[n+1]^{\ast}_{i\to i+1}\to \Delta[n]$ being a weak equivalence in the Joyal model structure. As our goal is to construct model structures with fibrant objects that have the same homotopical behavior as quasi-categories, a natural starting place is to declare each of these surjective maps $\Delta[n+1]^{\ast}_{i\to i+1}\to \Delta[n]$ for $n\geq 1$ and $0\leq i\leq n$ to be weak equivalences in our model structure. The aim of this subsection is to use 2-out-of-3 arguments to identify other maps that are forced to be weak equivalences in this situation.

\begin{defn}
We say that a Cisinski model structure on the category of simplicial sets is a \emph{homotopically-behaved model structure} if the surjective maps $\Delta[m+1]^{\ast}_{k\to k+1}\to \Delta[m]$ are weak equivalences for all $m\geq 1$ and $0\leq k \leq m$.
\end{defn}

Our first step is to notice that these maps have sections, which must also be weak equivalences by the 2-out-of-3 property.

\begin{lem}\label{lem:sectionsweakequiv}
The surjective maps $\Delta[m+1]^{\ast}_{k\to k+1}\to \Delta[m]$ are weak equivalences for all $m\geq 1$ and $0\leq k \leq m$ if and only if the sections $d^{k},d^{k+1}\colon\Delta[m]\hookrightarrow \Delta[m+1]^{\ast}_{k\to k+1}$ are as well.
\end{lem}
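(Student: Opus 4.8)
The plan is to use the 2-out-of-3 property of weak equivalences, exploiting the fact that the surjection $p\colon\Delta[m+1]^{\ast}_{k\to k+1}\to\Delta[m]$ and each of the two maps $d^k,d^{k+1}\colon\Delta[m]\to\Delta[m+1]^{\ast}_{k\to k+1}$ compose to the identity. First I would verify the basic claim that $p\circ d^k=\id_{\Delta[m]}$ and $p\circ d^{k+1}=\id_{\Delta[m]}$. Here $p$ is the map induced on the pushout $\Delta[m+1]^{\ast}_{k\to k+1}$ by the surjection $\Delta[m+1]\to\Delta[m]$ that collapses the edge $k\to(k+1)$ (i.e.\ the map $s^k$, or more precisely the degeneracy corresponding to identifying vertices $k$ and $k+1$), together with the structure map $\ast\to\Delta[m]$; and $d^k,d^{k+1}$ are the coface maps $\Delta[m]\hookrightarrow\Delta[m+1]$ landing in the subcomplex of $\Delta[m+1]$ that maps to the augmented simplex, which do factor through $\Delta[m+1]^{\ast}_{k\to k+1}$ because each avoids exactly one of the two vertices $k,k+1$ and hence doesn't interact with the collapsed edge in a problematic way. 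The composites are identities because collapsing the $k\to(k+1)$ edge after including $d_k$ (respectively $d_{k+1}$) just undoes the face inclusion on vertices.

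Given that, the equivalence is immediate in both directions. If every $p$ is a weak equivalence, then since $p\circ d^k=\id$ is a weak equivalence, 2-out-of-3 forces $d^k$ to be a weak equivalence, and likewise $d^{k+1}$. Conversely, if every $d^k$ (say) is a weak equivalence, then since $p\circ d^k=\id$ is a weak equivalence, 2-out-of-3 forces $p$ to be a weak equivalence. The same argument works with $d^{k+1}$ in place of $d^k$, so in fact knowing either family of sections consists of weak equivalences suffices.

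The only genuine content, and the step I expect to require the most care, is the bookkeeping identifying the maps $d^k,d^{k+1}$ as well-defined sections of $p$ — in particular checking that the coface maps $\Delta[m]\to\Delta[m+1]$ indeed descend to the quotient $\Delta[m+1]^{\ast}_{k\to k+1}$ and that the resulting triangle commutes. This is a routine diagram chase with the universal property of the pushout defining $A^X_{i\to i+1}$ (with $X=\ast$), but one must be slightly attentive to the indexing conventions for cofaces and to which of the two vertices $k$, $k+1$ each section's image omits. Everything else is a formal application of 2-out-of-3, using that identities are weak equivalences in any model structure.
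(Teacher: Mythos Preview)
Your proposal is correct and is precisely the argument the paper has in mind: the sentence preceding the lemma already signals that the proof is just the 2-out-of-3 property applied to the composites $p\circ d^k=\id_{\Delta[m]}$ and $p\circ d^{k+1}=\id_{\Delta[m]}$, and the paper does not spell it out further. Your added care about why $d^k$ and $d^{k+1}$ are well-defined sections of the quotient is fine but not strictly necessary here, since the maps $d^k,d^{k+1}\colon\Delta[m]\to\Delta[m+1]$ already land in $\Delta[m+1]$ and one simply composes with the quotient map to $\Delta[m+1]^{\ast}_{k\to k+1}$.
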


Our next step is to show that pinched horn inclusions are forced to be weak equivalences. Our proof requires first defining \emph{generalized pinched horns}. Given $n\geq 2$, $0\leq i\leq n-1$, and a subset $S\cin \{0,\ldots,n\}$ such that $\abs{S}\geq 2$ and exactly one of $i,i+1$ is in $S$, the generalized horn $\Lambda^{S}[n]\cin \Delta[n]$ is the union of all $d_j$ faces of $\Delta[n]$ for $j$ in $S$. There exists an $\ell\in S$ not equal to $i$ or $i+1$ and so the $d_{\ell}$ face of $\Delta[n]$ contains the $i\to (i+1)$ edge, and therefore $\Lambda^{S}[n]$ contains the $i\to (i+1)$ edge of $\Delta[n]$, allowing us to apply Definition \ref{def:pinchedandaugmentedhorns}.

\begin{defn}
Given $n\geq 2$, $0\leq i\leq n-1$, and a subset $S\cin \{0,\ldots,n\}$ such that $\abs{S}\geq 2$ and exactly one of $i,i+1$ is in $S$, we say that $\Lambda^{S}[n]^{\ast}_{i\to i+1}$ is a \emph{generalized pinched horn} and that $\Lambda^{S}[n]^{\ast}_{i\to i+1}\hookrightarrow \Delta[n]^{\ast}_{i\to i+1}$ is a \emph{generalized pinched horn inclusion}. Similarly, given $X$ and an inclusion $\Delta[1]\hookrightarrow X$, we say that $\Lambda^{S}[n]^{X}_{i\to i+1}$ is a \emph{generalized $X$-augmented horn} and that $\Lambda^{S}[n]^{X}_{i\to i+1}\hookrightarrow \Delta[n]^{\ast}_{i\to i+1}$ is a \emph{generalized $X$-augmented horn inclusion}.
\end{defn}

In addition to using the 2-out-of-3 property, the proofs of the upcoming propositions rely on the fact that every Cisinski model structure is left proper by Proposition 13.1.2 in \cite{Hirschhorn}. We state this standard fact as a lemma.

\begin{lem}\label{lem:leftproper}
In a Cisinski model structure, pushouts along inclusions preserve weak equivalences.
\end{lem}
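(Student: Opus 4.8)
The plan is to deduce the lemma directly from the standard fact that every Cisinski model structure is left proper. Recall that left properness of a model category means that in any pushout square whose left vertical leg is a cofibration and whose top horizontal map is a weak equivalence, the bottom horizontal map is again a weak equivalence; equivalently, pushing out along any fixed cofibration carries weak equivalences to weak equivalences. In a Cisinski model structure the cofibrations are by definition exactly the monomorphisms --- the maps we are here calling inclusions --- so, granting left properness, this assertion \emph{is} the content of the lemma, and the work reduces to verifying that the hypotheses of the relevant result from \cite{Hirschhorn} apply.

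The one thing left to justify is left properness itself, and for this I would invoke Proposition 13.1.2 of \cite{Hirschhorn}: a model category in which every object is cofibrant is left proper. Every object of a Cisinski model structure on $\sSet$ is cofibrant, since for each simplicial set $X$ the unique map $\emptyset \to X$ out of the initial object is a monomorphism and hence a cofibration. Combining these two observations --- all objects cofibrant, cofibrations equal monomorphisms equal inclusions --- yields the lemma.

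I do not expect any genuine obstacle here; the only point requiring care is the terminological bookkeeping, namely keeping straight that ``inclusion,'' ``monomorphism,'' and ``cofibration'' all name the same class of maps in this setting, so that the abstract left-properness statement matches the concrete claim about pushouts of simplicial sets verbatim. For that reason I would keep the written proof to a sentence or two, simply citing \cite[Prop.~13.1.2]{Hirschhorn} together with the fact that every simplicial set is cofibrant in a Cisinski model structure.
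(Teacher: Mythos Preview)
Your proposal is correct and matches the paper's approach exactly: the paper does not give a standalone proof but simply states this as a standard fact, citing Proposition~13.1.2 in \cite{Hirschhorn} (that a model category with all objects cofibrant is left proper) immediately before stating the lemma. Your write-up just makes explicit the two trivial observations the paper leaves implicit---that every simplicial set is cofibrant because $\emptyset \to X$ is a monomorphism, and that cofibrations coincide with inclusions---so there is nothing to add.
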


\begin{prop}\label{prop:behavediffpinched}
A Cisinski model structure is homotopically-behaved if and only if every generalized pinched horn inclusion is a weak equivalence.
\end{prop}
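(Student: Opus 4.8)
The plan is to derive both implications from a single technical fact: in any Cisinski model structure satisfying either hypothesis, for every generalized pinched horn the restriction $\pi_S\colon\Lambda^{S}[n]^{\ast}_{i\to i+1}\to\Delta[n-1]$ of the canonical surjection $\Delta[n]^{\ast}_{i\to i+1}\to\Delta[n-1]$ is a weak equivalence. The value of this pivot is that, taking $i\in S$ without loss of generality, the face $d_i\Delta[n]\subseteq\Lambda^{S}[n]$ contains neither endpoint of the collapsed edge, hence embeds into $\Lambda^{S}[n]^{\ast}_{i\to i+1}$, and the codegeneracy $s^i$ restricts to an isomorphism $d_i\Delta[n]\xrightarrow{\sim}\Delta[n-1]$; so $\pi_S$ is a retraction and there is a commutative square
\[
\begin{tikzcd}
\Lambda^{S}[n]^{\ast}_{i\to i+1} \arrow[r, hook] \arrow[d, "\pi_S"'] & \Delta[n]^{\ast}_{i\to i+1} \arrow[d] \\
\Delta[n-1] \arrow[r, equal] & \Delta[n-1]
\end{tikzcd}
\]
whose right-hand leg is the canonical surjection and whose bottom is the identity. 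Since the right-hand composite is $\pi_S$, the 2-out-of-3 property says that whenever two of the three maps --- the generalized pinched horn inclusion, $\pi_S$, and the canonical surjection --- are weak equivalences, so is the third. Granting the technical fact: for ($\Rightarrow$), the surjection is a weak equivalence (as the model structure is homotopically-behaved) and $\pi_S$ is a weak equivalence, so the inclusion is; for ($\Leftarrow$), taking $S=\{0,\dots,n\}\setminus\{i+1\}$ (a valid index set) makes the inclusion a weak equivalence by hypothesis, and then the canonical surjection $\Delta[n]^{\ast}_{i\to i+1}\to\Delta[n-1]$ is a weak equivalence, which is exactly the homotopically-behaved condition.

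To prove the technical fact I would induct on $n$, and for fixed $n$ induct on $|S|$. For $n=2$ necessarily $|S|=2$, and a direct computation identifies $\Lambda^{S}[2]^{\ast}_{i\to i+1}$ with $\Delta[1]$ in a way that makes $\pi_S$ an isomorphism. For $|S|=2$ and $n\geq 3$, write $S=\{i,\ell\}$ with $i\in S$; the collapsed edge lies only in the face $d_\ell\Delta[n]$, so collapsing commutes with the pushout $\Lambda^{S}[n]=d_i\Delta[n]\cup_{d_id_\ell\Delta[n]}d_\ell\Delta[n]$, giving $\Lambda^{\{i,\ell\}}[n]^{\ast}_{i\to i+1}=d_i\Delta[n]\cup_{d_id_\ell\Delta[n]}d_\ell\Delta[n]^{\ast}_{i\to i+1}$, and the gluing map $d_id_\ell\Delta[n]\hookrightarrow d_\ell\Delta[n]^{\ast}_{i\to i+1}$ is isomorphic to one of the sections $d^{p}\colon\Delta[n-2]\hookrightarrow\Delta[n-1]^{\ast}_{p\to p+1}$ of Lemma~\ref{lem:sectionsweakequiv}. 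This section is a weak equivalence --- directly so when the model structure is homotopically-behaved, and in the ($\Leftarrow$) direction by the inductive hypothesis (the level-$(n-1)$ surjection is already known to be a weak equivalence, hence so are its sections). By left properness (Lemma~\ref{lem:leftproper}), $d_i\Delta[n]\hookrightarrow\Lambda^{\{i,\ell\}}[n]^{\ast}_{i\to i+1}$ is then a weak equivalence, and since $\pi_S$ restricts on $d_i\Delta[n]$ to the isomorphism $s^i$, a further 2-out-of-3 shows $\pi_S$ is a weak equivalence. For $|S|\geq 3$, choose $\ell\in S$ with $\ell\neq i$, so that $S'=S\setminus\{\ell\}$ is still a valid index set; the collapsed edge now lies in the common intersection, so collapsing commutes with the pushout $\Lambda^{S}[n]=\Lambda^{S'}[n]\cup_{H}d_\ell\Delta[n]$, where $H=\Lambda^{S'}[n]\cap d_\ell\Delta[n]$ is the union of the faces of $d_\ell\Delta[n]$ indexed by $S'$. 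Because $S'$ is valid, $H^{\ast}_{i\to i+1}\hookrightarrow d_\ell\Delta[n]^{\ast}_{i\to i+1}$ is a generalized pinched horn inclusion in the $(n-1)$-simplex $d_\ell\Delta[n]$ (its index set has size $|S|-1\geq 2$ and contains exactly one endpoint of the collapsed edge), which is a weak equivalence by hypothesis in the ($\Leftarrow$) direction and by the outer inductive hypothesis together with the square observation applied at level $n-1$ in the ($\Rightarrow$) direction. Left properness then makes $\Lambda^{S'}[n]^{\ast}_{i\to i+1}\hookrightarrow\Lambda^{S}[n]^{\ast}_{i\to i+1}$ a weak equivalence, and since $\pi_S$ restricts to $\pi_{S'}$ there, the inner inductive hypothesis and 2-out-of-3 close the argument.

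The main difficulty is bookkeeping rather than conceptual. One has to check carefully that collapsing the $i\to (i+1)$ edge commutes with each pushout used, which depends on the edge lying in the correct piece of the decomposition --- the face $d_\ell\Delta[n]$ when $|S|=2$, the common intersection $H$ when $|S|\geq 3$ --- and one has to recognize each intersection of faces as a generalized (pinched) horn whose index set has size at least $2$ and contains exactly one endpoint of the collapsed edge. It also takes care to arrange the nested induction on $(n,|S|)$ so that the base cases --- the sections produced in the $|S|=2$ step and the explicit $n=2$ computation --- bottom out without circularity; relatedly, in the ($\Leftarrow$) direction the homotopically-behaved condition must be extracted one dimension at a time, by alternating the proof that $\pi_S$ is a weak equivalence at level $n$ with the proof that the canonical surjection is a weak equivalence at level $n$.
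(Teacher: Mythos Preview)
Your proof is correct and follows essentially the same approach as the paper's: both arguments factor through the inclusion $d^i\colon\Delta[n-1]\hookrightarrow\Lambda^{S}[n]^{\ast}_{i\to i+1}$ (equivalently, its retraction $\pi_S$), apply 2-out-of-3, and run an induction on $|S|$ in which the base case $|S|=2$ is a pushout of a section $\Delta[n-2]\hookrightarrow\Delta[n-1]^{\ast}_{p\to p+1}$ and the inductive step peels off one face via a lower-dimensional generalized pinched horn inclusion and left properness.

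Two small remarks. First, the phrase ``contains neither endpoint of the collapsed edge'' is not quite right --- the face $d_i\Delta[n]$ does contain the vertex $i{+}1$; what matters (and what you actually use) is that it does not contain the \emph{edge} $i\to i{+}1$, so the collapse leaves it untouched. Second, your organization is a little heavier than necessary for the reverse implication: since you are assuming \emph{every} generalized pinched horn inclusion is a weak equivalence, you may take $|S|=2$ directly (as the paper does) rather than the full $S=\{0,\dots,n\}\setminus\{i{+}1\}$, which spares you the $|S|\geq 3$ induction in that direction. Neither point is a gap.
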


\begin{proof}
We first prove the forward implication. By Lemma \ref{lem:sectionsweakequiv}, we know that the composite of
\[
\begin{tikzcd}
{\Delta[n-1]} \arrow[rr, hook] &  & {\Lambda^{S}[n]^{\ast}_{i\to i+1}} \arrow[rr, hook] &  & {\Delta[n]^{\ast}_{i\to i+1}}
\end{tikzcd}
\]
is a weak equivalence, so it suffices to show that the map on the left is a weak equivalence by the 2-out-of-3 property. This map on the left is the inclusion of either the $d^i$ or $d^{i+1}$ face, whichever is in $S$.

For $n=2$, the map on the left is actually an isomorphism since $\Lambda^{S}[2]$ is the union of two 1-simplices, one of which gets collapsed to a point to create $\Lambda^{S}[2]^{\ast}_{i\to i+1}$.

For $n\geq 3$, we proceed by induction on $\abs{S}$. For the base case $\abs{S}=2$, the inclusion $\Delta[n-1]\hookrightarrow \Lambda^{S}[n]^{\ast}_{i\to i+1}$ amounts to gluing in a copy of $\Delta[n-1]^{\ast}_{i'\to i'+1}$ along one of the faces of $\Delta[n-1]$. In other words, it is a pushout of the weak equivalence $\Delta[n-2]\hookrightarrow \Delta[n-1]^{\ast}_{i'\to i'+1}$ along an inclusion, and so is a weak equivalence.

Now if $\abs{S}\geq 3$, we pick some $j\in S$ not equal to $i$ or $i+1$ and let $S'=S\smallsetminus \{j\}$. By induction we know that the inclusion $\Delta[n]\hookrightarrow \Lambda^{S'}[n+1]^{\ast}_{i\to i+1}$ is a weak equivalence, so it suffices to show that the inclusion $\Lambda^{S'}[n+1]^{\ast}_{i\to i+1}\hookrightarrow \Lambda^{S}[n+1]^{\ast}_{i\to i+1}$ is as well. But this latter inclusion is itself a pushout of a pinched generalized horn whose subset of indices is of size one less than $\abs{S}$, and so is a weak equivalence.

We now turn to the reverse implication, so let us assume that every generalized pinched horn inclusion is a weak equivalence. In particular, we can pick $S$ such that $\abs{S}=2$, and consider the composite
\[
\begin{tikzcd}
{\Delta[n-1]} \arrow[rr, hook] &  & {\Lambda^{S}[n]^{\ast}_{i\to i+1}} \arrow[rr, hook] &  & {\Delta[n]^{\ast}_{i\to i+1}}\nospace{.}
\end{tikzcd}
\]
By Lemma \ref{lem:sectionsweakequiv}, it suffices to show that every such composite map is a weak equivalence, and therefore (by the 2-out-of-3 property) to show that the map on the left is a weak equivalence. We proceed by induction on $n$. As we saw in the proof of the forward implication, in the base case $n=2$ the map on the left is an isomorphism. For $n\geq 3$, the map on the left is a pushout of $\Delta[n-2]\hookrightarrow \Delta[n-1]^{\ast}_{i\to i+1}$.
\end{proof}

We can now go one step further and show that certain generalized augmented horn inclusions must also be weak equivalences in a homotopically-behaved model structure.

\begin{prop}\label{prop:generalizedIaugmented}
Given a simplicial set $I$ and an inclusion $\Delta[1]\hookrightarrow I$, if the map $I\to \ast$ is a weak equivalence in a given Cisinski model structure, then the model structure is homotopically-behaved if and only if every generalized $I$-augmented horn inclusion is a weak equivalence in that model structure.
\end{prop}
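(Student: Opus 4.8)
The plan is to bridge each generalized $I$-augmented horn inclusion to the corresponding generalized pinched horn inclusion using the collapse map $I\to\ast$, and then feed the result into Proposition \ref{prop:behavediffpinched}. Concretely, I would compare the square of $I$-augmented objects with the square of pinched objects via vertical maps induced by $I\to\ast$, show those vertical maps are weak equivalences, and conclude via 2-out-of-3.

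The technical heart is the following claim: for any subcomplex $A\cin\Delta[n]$ containing the $i\to(i+1)$ edge, the canonical map $A^{I}_{i\to i+1}\to A^{\ast}_{i\to i+1}$ induced by $I\to\ast$ is a weak equivalence. I would prove this by identifying the map as a pushout of $I\to\ast$ along the inclusion $I\hookrightarrow A^{I}_{i\to i+1}$. Indeed, pasting the defining pushout square of $A^{I}_{i\to i+1}$ (with top edge $\Delta[1]\hookrightarrow I$) together with the evident square collapsing $I$ to $\ast$ yields an outer rectangle whose pushout is exactly $A^{\ast}_{i\to i+1}$; by the pasting lemma the right-hand square is then a pushout, so $A^{\ast}_{i\to i+1}$ is the pushout of $A^{I}_{i\to i+1}\leftarrow I\to\ast$. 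Since $\Delta[1]\hookrightarrow A$ is a monomorphism and pushouts of monomorphisms are monomorphisms in $\sSet$, the map $I\hookrightarrow A^{I}_{i\to i+1}$ is a monomorphism, so Lemma \ref{lem:leftproper} (left properness) turns the weak equivalence $I\to\ast$ into the desired weak equivalence $A^{I}_{i\to i+1}\to A^{\ast}_{i\to i+1}$.

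Applying the claim with $A=\Lambda^{S}[n]$ and with $A=\Delta[n]$ produces a commutative square
\[
\begin{tikzcd}
\Lambda^{S}[n]^{I}_{i\to i+1} \arrow[r, hook] \arrow[d, "{\sim}"'] & \Delta[n]^{I}_{i\to i+1} \arrow[d, "{\sim}"] \\
\Lambda^{S}[n]^{\ast}_{i\to i+1} \arrow[r, hook] & \Delta[n]^{\ast}_{i\to i+1}
\end{tikzcd}
\]
whose vertical maps are weak equivalences, so by 2-out-of-3 the top map (a generalized $I$-augmented horn inclusion) is a weak equivalence if and only if the bottom map (a generalized pinched horn inclusion) is. Hence, if the model structure is homotopically-behaved, Proposition \ref{prop:behavediffpinched} makes every such bottom map a weak equivalence, forcing every generalized $I$-augmented horn inclusion to be one as well; and conversely, if every generalized $I$-augmented horn inclusion is a weak equivalence, the square forces every generalized pinched horn inclusion to be a weak equivalence, so Proposition \ref{prop:behavediffpinched} gives that the model structure is homotopically-behaved.

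The only step demanding real care is the claim in the second paragraph: recognizing $A^{I}_{i\to i+1}\to A^{\ast}_{i\to i+1}$ as a pushout of $I\to\ast$ is precisely where the hypothesis on $I\to\ast$ gets used, and one must check the monomorphism condition so that left properness applies. Everything after that is a formal 2-out-of-3 argument resting on Proposition \ref{prop:behavediffpinched}, so I do not anticipate further obstacles.
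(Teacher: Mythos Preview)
Your proposal is correct and follows essentially the same route as the paper: both arguments compare the $I$-augmented horn inclusion to the pinched horn inclusion via the collapse maps $A^{I}_{i\to i+1}\to A^{\ast}_{i\to i+1}$, invoke left properness (Lemma \ref{lem:leftproper}) to see these are weak equivalences, and then apply 2-out-of-3 together with Proposition \ref{prop:behavediffpinched}. Your write-up is in fact more detailed than the paper's, which simply asserts that the horizontal maps in the analogous diagram are weak equivalences by Lemma \ref{lem:leftproper} without spelling out the pushout-pasting argument.
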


\begin{proof}
In the diagram
\[
\begin{tikzcd}
I \arrow[d, hook] \arrow[rr, "\sim"]                               &  & \ast \arrow[d, hook]                               \\
{\Lambda^{S}[n]^{I}_{i\to i+1}} \arrow[d, hook] \arrow[rr, "\sim"] &  & {\Lambda^{S}[n]^{\ast}_{i\to i+1}} \arrow[d, hook] \\
{\Delta[n]^{I}_{i\to i+1}} \arrow[rr, "\sim"]                      &  & {\Delta[n]^{\ast}_{i\to i+1}}                     \nospace{,}
\end{tikzcd}
\]
the horizontal maps are all weak equivalences by Lemma \ref{lem:leftproper}. The bottom-left vertical map is a weak equivalence if and only if the bottom-right vertical map is by the 2-out-of-3 property, which is a weak equivalence if and only if the model structure is homotopically-behaved by Proposition \ref{prop:behavediffpinched}.
\end{proof}

Let us recall the definition of $J$, the key example of a simplicial set that is weakly equivalent to $\ast$ in every Cisinski model structure on $\sSet$.

\begin{defn}
Let $\mathbb{I}$ be the category with two objects and precisely one morphism in every hom-set, sometimes called the \emph{free-living isomorphism}. We denote the nerve of the free-living isomorphism by $J=N(\mathbb{I})$.
\end{defn}

\begin{cor}\label{cor:minimalhtpicalimpliesaughorns}
A Cisinski model structure is homotopically-behaved if and only if every generalized $J$-augmented horn inclusion is a weak equivalence.
\end{cor}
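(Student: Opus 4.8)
The plan is to obtain this as the special case $I=J$ of Proposition~\ref{prop:generalizedIaugmented}. Concretely, one fixes once and for all an inclusion $\Delta[1]\hookrightarrow J$ and observes that for \emph{every} Cisinski model structure the map $J\to\ast$ is a weak equivalence (this is the property of $J$ recalled just above). Granting these two facts, Proposition~\ref{prop:generalizedIaugmented} applied with $I=J$ reads exactly: a Cisinski model structure is homotopically-behaved if and only if every generalized $J$-augmented horn inclusion is a weak equivalence — which is the assertion, since a generalized $J$-augmented horn inclusion is by definition a generalized $I$-augmented horn inclusion for $I=J$. So there is essentially nothing to prove beyond checking that the hypotheses of Proposition~\ref{prop:generalizedIaugmented} hold for $I=J$.

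For the inclusion, I would take $\Delta[1]\hookrightarrow J$ to be the nerve of an object-bijective functor $[1]\to\mathbb I$ (equivalently, one of the two non-degenerate $1$-simplices of $J=N(\mathbb I)$). This is a monomorphism: an $n$-simplex of $J$ is a functor $[n]\to\mathbb I$, which — since any two objects of $\mathbb I$ admit a unique morphism between them — is determined by its underlying (arbitrary) function $[n]\to\{0,1\}$; the map $\Delta[1]_n\to J_n$ sends a monotone map $[n]\to[1]$ to its underlying function, and distinct monotone maps have distinct underlying functions, so it is injective in each degree. It is a proper inclusion, since for $n\ge 1$ most functions $[n]\to\{0,1\}$ are not monotone, so that gluing $J$ in along this edge genuinely augments it.

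The remaining input — that $J\to\ast$ is a weak equivalence in an arbitrary Cisinski model structure on $\sSet$ — is the one non-formal ingredient, and I would cite it rather than reprove it, as it is recorded above when $J$ is introduced. If a self-contained argument were wanted, the route would be: since the minimal Cisinski model structure has the largest class of fibrant objects, by Remark~\ref{rmk:MSdeterminedbyfibobj} its weak equivalences lie among those of every Cisinski model structure, so it suffices to treat the minimal one; and there $J$ is contractible — for instance the functors $\const_0$ and $\id_{\mathbb I}$ on $\mathbb I$ are connected through $\mathbb I\times\mathbb I\to\mathbb I$, $(a,b)\mapsto\min(a,b)$, giving a deformation retraction of $J$ onto a vertex. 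Converting that $J$-homotopy into a genuine weak equivalence in the minimal model structure (by comparing it with that model structure's own cylinder) is the only place real work would be needed; everything else in the corollary is a formality flowing from Proposition~\ref{prop:generalizedIaugmented}.
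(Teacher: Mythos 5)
Your reduction is exactly the paper's: the corollary is Proposition~\ref{prop:generalizedIaugmented} specialized to $I=J$, and the only substantive input is that $J\to\ast$ is a weak equivalence in every Cisinski model structure. Where you diverge is in how that input is secured. You propose either to cite it (but it is not actually recorded earlier in the paper --- the proof of this corollary is where it gets established) or to argue via a deformation retraction of $J$ onto a vertex in the minimal model structure, and you yourself flag that converting the resulting $J$-homotopy into a weak equivalence ``is the only place real work would be needed.'' That gap closes immediately with a cleaner observation, which is the one the paper uses: $J\to\ast$ has the right lifting property with respect to all monomorphisms. Indeed, an $n$-simplex of $J$ is an arbitrary function $[n]\to\{0,1\}$, so for $n\geq 1$ any map $\partial\Delta[n]\to J$ is determined on vertices and extends uniquely to $\Delta[n]\to J$, while the case $n=0$ just needs $J$ to be nonempty. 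Hence $J\to\ast$ lifts against every cofibration of every Cisinski model structure, i.e.\ it is a trivial fibration and in particular a weak equivalence --- no homotopy or cylinder comparison is needed. With that substitution your argument is complete and coincides with the paper's; your verification that $\Delta[1]\hookrightarrow J$ is a monomorphism is correct, if more detailed than necessary.
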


\begin{proof}
Observe that $J\to \ast$ has the right lifting property with respect to all monomorphisms, and so is weak equivalence in every Cisinski model structure. Therefore the hypothesis of Proposition \ref{prop:generalizedIaugmented} is satisfied for $\Delta[1]\hookrightarrow J$.
\end{proof}

We can further strengthen this statement once we prove the following lemma.

\begin{prop}\label{prop:generalizedhornpushoutsofhorns}
Given a simplicial set $I$ and $\Delta[1]\hookrightarrow I$, every generalized $I$-augmented horn inclusion can be realized as a sequence of pushouts of $I$-augmented horn inclusions.
\end{prop}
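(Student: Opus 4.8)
The plan is to induct on the dimension $n$, peeling off the ``new'' faces of the generalized horn one at a time and reducing a generalized $I$-augmented horn inclusion $\Lambda^{S}[n]^{I}_{i\to i+1}\hookrightarrow\Delta[n]^{I}_{i\to i+1}$ to ordinary $I$-augmented horn inclusions of dimension $n$ together with generalized $I$-augmented horn inclusions of dimension $n-1$. First I would let $j_0$ be the unique element of $\{i,i+1\}$ not lying in $S$ (it exists since exactly one of $i,i+1$ belongs to $S$). Then $S\cin\{0,\dots,n\}\smallsetminus\{j_0\}$, so the inclusion factors as
\[
\Lambda^{S}[n]^{I}_{i\to i+1}\hooklongrightarrow\Lambda^{j_0}[n]^{I}_{i\to i+1}\hooklongrightarrow\Delta[n]^{I}_{i\to i+1},
\]
and since $j_0\in\{i,i+1\}$, the second map is already an ordinary $I$-augmented horn inclusion. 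So it suffices to exhibit the first map as a sequence of pushouts of $I$-augmented horn inclusions. The base case $n=2$ is immediate: the hypotheses force $\abs{S}=2$, so $\Lambda^{S}[2]=\Lambda^{j_0}[2]$ and the first map is the identity.

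For the inductive step I would fix an ordering $k_1,\dots,k_r$ of $(\{0,\dots,n\}\smallsetminus\{j_0\})\smallsetminus S$ and set $S_t=S\cup\{k_1,\dots,k_t\}$, producing a filtration $\Lambda^{S}[n]=\Lambda^{S_0}[n]\cin\dots\cin\Lambda^{S_r}[n]=\Lambda^{j_0}[n]$ whose $t$-th step glues in the single face $d_{k_t}\Delta[n]\cong\Delta[n-1]$. The combinatorial heart is the standard identification $\Lambda^{S_{t-1}}[n]\cap d_{k_t}\Delta[n]=\Lambda^{\phi_t(S_{t-1})}[n-1]$, where $\phi_t\colon\{0,\dots,n\}\smallsetminus\{k_t\}\xrightarrow{\sim}\{0,\dots,n-1\}$ is the order-preserving bijection; this exhibits $\Lambda^{S_{t-1}}[n]\hookrightarrow\Lambda^{S_t}[n]$ as a pushout of the generalized horn inclusion $\Lambda^{\phi_t(S_{t-1})}[n-1]\hookrightarrow\Delta[n-1]$. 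One then checks that, because $j_0$ is never added to the index set and the element of $\{i,i+1\}$ lying in $S$ is never removed, one has $k_t\notin\{i,i+1\}$ at every stage; hence the edge $i\to i+1$ persists inside $d_{k_t}\Delta[n]$ as the consecutive edge $\phi_t(i)\to\phi_t(i+1)$, it lies in $\Lambda^{\phi_t(S_{t-1})}[n-1]$, and $\phi_t(S_{t-1})$ still has at least two elements and meets $\{\phi_t(i),\phi_t(i+1)\}$ in exactly one. So Definition \ref{def:pinchedandaugmentedhorns} applies and $\Lambda^{\phi_t(S_{t-1})}[n-1]^{I}_{\phi_t(i)\to\phi_t(i+1)}\hookrightarrow\Delta[n-1]^{I}_{\phi_t(i)\to\phi_t(i+1)}$ is a genuine generalized $I$-augmented horn inclusion of dimension $n-1$.

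The remaining step is to apply $(-)^{I}_{i\to i+1}$ to the pushout square exhibiting the $t$-th filtration step. Since all four subcomplexes in that square contain the edge $i\to i+1$ — in particular the pushout corner $\Lambda^{\phi_t(S_{t-1})}[n-1]$ does, by the previous paragraph — and since $(-)^{I}_{i\to i+1}$ is itself a pushout along $\Delta[1]\hookrightarrow I$, the functor $(-)^{I}_{i\to i+1}$ commutes with this pushout. Hence $\Lambda^{S_{t-1}}[n]^{I}_{i\to i+1}\hookrightarrow\Lambda^{S_t}[n]^{I}_{i\to i+1}$ is a pushout of $\Lambda^{\phi_t(S_{t-1})}[n-1]^{I}_{\phi_t(i)\to\phi_t(i+1)}\hookrightarrow\Delta[n-1]^{I}_{\phi_t(i)\to\phi_t(i+1)}$, which by the inductive hypothesis is a sequence of pushouts of ordinary $I$-augmented horn inclusions; a pushout of such a sequence is again one, by pushout pasting. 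Composing the $r$ filtration steps and then the ordinary $I$-augmented horn inclusion $\Lambda^{j_0}[n]^{I}_{i\to i+1}\hookrightarrow\Delta[n]^{I}_{i\to i+1}$ completes the induction.

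I expect the main obstacle to be bookkeeping rather than conceptual: the key discipline is that the edge $i\to i+1$ must remain present, and in the correct consecutive slot $\phi_t(i)\to\phi_t(i+1)$, throughout the filtration — this is precisely why the filtration must stop at the ordinary horn $\Lambda^{j_0}[n]$ and not continue on to $\partial\Delta[n]$, since the remaining face $d_{j_0}\Delta[n]$ does not contain the edge and $(-)^{I}_{i\to i+1}$ would fail to commute with that pushout. The one auxiliary fact worth isolating as a lemma is that $(-)^{I}_{i\to i+1}$, being a pushout along $\Delta[1]\hookrightarrow I$, commutes with any pushout of subcomplexes of $\Delta[n]$ all of which contain the edge $i\to i+1$; this is a routine ``colimits commute with colimits'' verification.
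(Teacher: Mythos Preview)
Your proof is correct and follows essentially the same strategy as the paper's: factor the generalized $I$-augmented horn inclusion by enlarging $S$ one element at a time until reaching an ordinary $I$-augmented horn, observing that each such step is a pushout of a generalized $I$-augmented horn inclusion one dimension down. The only organizational difference is that the paper inducts on the single quantity $k=n-\abs{S}$ (which drops both when enlarging $S$ and when passing to the dimension-$(n-1)$ pushout piece), whereas you induct on $n$ with an inner filtration of length $n-\abs{S}$; the combinatorial content is the same. Your write-up is more explicit about the bookkeeping---tracking that the edge $i\to i+1$ survives as a consecutive edge under $\phi_t$, and that $(-)^{I}_{i\to i+1}$ commutes with the relevant pushouts---which the paper leaves implicit.
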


\begin{proof}
We emulate Joyal's proof for generalized inner horns, Proposition 2.12(iv) in \cite{Joyal:theory}. To show that every generalized $I$-augmented horn inclusion $\Lambda^{S}[n]^{X}_{i\to i+1}\hookrightarrow \Delta[n]^{\ast}_{i\to i+1}$ is a pushout of $I$-augmented horn inclusions, we proceed by induction on $k=n-\abs{S}$. The base case, $k=0$, is immediate because then $\abs{S}=n$ so $\Lambda^{S}[n]^{X}_{i\to i+1}\hookrightarrow \Delta[n]^{\ast}_{i\to i+1}$ is itself an $I$-augmented horn inclusion. For $k\geq 1$, we pick $\ell$ in $\{0,\ldots,n\}\smallsetminus (S\cup \{i, i+1\})$ and let $T=S\cup\{\ell\}$. Then we have
\[
\begin{tikzcd}
{\Lambda^{S}[n]^{X}_{i\to i+1}} \arrow[rr, hook] &  & {\Lambda^{T}[n]^{X}_{i\to i+1}} \arrow[rr, hook] &  & {\Delta[n]^{\ast}_{i\to i+1}}\nospace{.}
\end{tikzcd}
\]
The right map has $n-\abs{T}< n-\abs{S} =k$. The left map is a pushout of a generalized $I$-augmented horn inclusion with indexing set $S'$ with the same size as $S$, so that $n-1 -\abs{S'}=n-1-\abs{S} < k$. Therefore, both of these maps are sequences of pushouts of $I$-augmented horn inclusions by the inductive hypothesis, meaning the composite is as well.
\end{proof}

\begin{cor}\label{cor:characterizedhtpicalMS}
Let $\mathcal{M}$ be a Cisinski model structure on $\sSet$. Given a simplicial set $I$ and an inclusion $\Delta[1]\hookrightarrow I$, if the map $I\to \ast$ is a weak equivalence in the model structure $\mathcal{M}$, then $\mathcal{M}$ is homotopically-behaved if and only if every $I$-augmented horn inclusion is a weak equivalence in $\mathcal{M}$. In particular, the model structure $\mathcal{M}$ is homotopically-behaved if and only if every $J$-augmented horn inclusion is a weak equivalence in $\mathcal{M}$.
\end{cor}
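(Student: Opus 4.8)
The plan is to chain together the three results already established. Proposition \ref{prop:generalizedIaugmented} characterizes the homotopically-behaved condition (given that $I\to\ast$ is a weak equivalence in $\mathcal{M}$) in terms of \emph{generalized} $I$-augmented horn inclusions, while Proposition \ref{prop:generalizedhornpushoutsofhorns} expresses each generalized $I$-augmented horn inclusion as a finite sequence of pushouts of ordinary $I$-augmented horn inclusions. So the corollary should reduce to passing back and forth between the generalized and ordinary versions, using left properness (Lemma \ref{lem:leftproper}).

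\emph{Forward direction.} Assume $\mathcal{M}$ is homotopically-behaved. Since $I\to\ast$ is a weak equivalence in $\mathcal{M}$, Proposition \ref{prop:generalizedIaugmented} tells us that every generalized $I$-augmented horn inclusion is a weak equivalence in $\mathcal{M}$. It then suffices to recognize each $I$-augmented horn inclusion as a generalized one: the ordinary horn $\Lambda^j[n]$ with $j\in\{i,i+1\}$ is the generalized horn $\Lambda^S[n]$ for $S=\{0,\ldots,n\}\smallsetminus\{j\}$, and for $n\geq 2$ this $S$ has $\abs{S}\geq 2$ and contains exactly one of $i,i+1$, so $\Lambda^j[n]^I_{i\to i+1}\hookrightarrow\Delta[n]^I_{i\to i+1}$ is an instance of a generalized $I$-augmented horn inclusion, hence a weak equivalence.

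\emph{Reverse direction.} Assume every $I$-augmented horn inclusion is a weak equivalence in $\mathcal{M}$. By Proposition \ref{prop:generalizedhornpushoutsofhorns}, an arbitrary generalized $I$-augmented horn inclusion is a finite composite of maps, each obtained by pushing out an $I$-augmented horn inclusion along some map into its domain. An $I$-augmented horn inclusion is a monomorphism and, by assumption, a weak equivalence, so each such pushout is a pushout of a weak equivalence along an inclusion, hence again a weak equivalence by Lemma \ref{lem:leftproper} (and still a monomorphism). Composing these finitely many weak equivalences shows that every generalized $I$-augmented horn inclusion is a weak equivalence, and then Proposition \ref{prop:generalizedIaugmented} gives that $\mathcal{M}$ is homotopically-behaved. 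The final ``in particular'' follows by taking $I=J$ with the canonical inclusion $\Delta[1]\hookrightarrow J$, since $J\to\ast$ has the right lifting property against all monomorphisms and is therefore a weak equivalence in every Cisinski model structure, as recorded in the proof of Corollary \ref{cor:minimalhtpicalimpliesaughorns}.

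I do not anticipate a genuine obstacle, since all of the substance has been isolated into the preceding propositions; the only point needing a little care is the bookkeeping in the reverse direction — namely that a finite sequence of pushouts of trivial cofibrations is a weak equivalence — which is immediate from left properness together with the two-out-of-three property for composites.
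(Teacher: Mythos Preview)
Your proof is correct and follows essentially the same route as the paper's, which simply observes that Proposition~\ref{prop:generalizedhornpushoutsofhorns} makes the generalized and ordinary $I$-augmented conditions equivalent and then appeals to Proposition~\ref{prop:generalizedIaugmented} and Corollary~\ref{cor:minimalhtpicalimpliesaughorns}. One small quibble: in the reverse direction you cite Lemma~\ref{lem:leftproper}, but left properness concerns pushing a weak equivalence out along a cofibration, whereas here the $I$-augmented horn inclusion is itself the cofibration and the leg you push along is arbitrary; the cleaner justification is that trivial cofibrations are closed under pushout in any model category.
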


\begin{proof}
By Proposition \ref{prop:generalizedhornpushoutsofhorns}, generalized $I$-augmented horn inclusions are weak equivalences if and only if $I$-augmented horn inclusions are. Apply this observation to Proposition \ref{prop:generalizedIaugmented} and Corollary \ref{cor:minimalhtpicalimpliesaughorns}.
\end{proof}

We conclude this subsection by comparing the lifting properties of fibrant objects in a homotopically-behaved model structure to those of quasi-categories.

\begin{defn}
Let $h\colon \sSet\to \Cat$ be the left adjoint of the nerve functor. We say an edge in a simplicial set $X$ is a \emph{categorical pre-isomorphism} if it becomes an isomorphism in the category $h(X)$.
\end{defn}

\begin{rmk}
The functor $h$ freely builds a category out of a simplicial set $X$ where the set of objects of $hX$ is the set of 0-simplices $X_0$, and the set of morphisms is generated by the 1-simplices with 2-simplices witnessing composition. We discuss $h$ in more detail in \ref{sec:special}. The key takeaway at the moment is that the universal property of the unit $X\to hX$ implies that an edge $e$ of $X$ is a categorical pre-isomorphism precisely if every map from $X$ to the nerve of a category sends $e$ to an isomorphism.
\end{rmk}

An intuitive justification for why augmented horn inclusions are weak equivalences in a homotopically-behaved model structure comes from recalling the special outer horn lifting property of quasi-categories.

\begin{prop}\label{prop:qcatsphorn}\cite[Thm.~1.3]{Joyal:published}
If $Q$ is a quasi-category, then for every $n\geq 2$ and every $u\colon \Lambda^0[n]\to Q$ such that the edge $0\to 1$ in $\Lambda^0[n]$ is sent to a categorical pre-isomorphism by $u$, we have a lift
\[
\begin{tikzcd}
{\Lambda^0[n]} \arrow[d, hook] \arrow[r, "u"] & X \\
{\Delta[n]} \arrow[ru, dotted]                &  \ \ \nospace{.}
\end{tikzcd}
\]
Similarly, for every $n\geq 2$ and every $v\colon \Lambda^n[n]\to Q$ such that the edge $n-1\to n$ in $\Lambda^n[n]$ is sent to a categorical pre-isomorphism by $v$, we have an extension of $v$ along $\Lambda^n[n]\hookrightarrow \Delta[n]$.
\end{prop}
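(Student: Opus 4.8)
The plan is to derive this statement (originally due to Joyal \cite{Joyal:published}) from the framework above: glue a copy of $J$ onto the would-be-invertible edge of the special outer horn, and then appeal to the fact that $J$-augmented horn inclusions are trivial cofibrations in the Joyal model structure. So what follows is really a reorganization of Joyal's result.

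First I would record two inputs. The map $J\to \ast$ has the right lifting property with respect to every monomorphism, hence is a trivial fibration and in particular a Joyal weak equivalence. And the Joyal model structure is homotopically-behaved, since the surjections $\Delta[m+1]^{\ast}_{k\to k+1}\to \Delta[m]$ are Joyal weak equivalences; this is elementary and is in any case used in Section \ref{sec:behaved}. Feeding both facts into Corollary \ref{cor:characterizedhtpicalMS} with $I=J$ shows that every $J$-augmented horn inclusion is a Joyal weak equivalence, and being a monomorphism it is then a Joyal trivial cofibration, so every quasi-category has the right lifting property with respect to it.

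Next I would run the gluing argument for $\Lambda^0[n]$. Given $u\colon \Lambda^0[n]\to Q$ with $n\geq 2$ sending the edge $0\to 1$ to a categorical pre-isomorphism $e\colon \Delta[1]\to Q$, I would invoke the classical fact (see \cite{Joyal:theory}) that an edge of a quasi-category is a categorical pre-isomorphism if and only if it extends along $\Delta[1]\hookrightarrow J$, obtaining $\bar e\colon J\to Q$ restricting to $e$. Since $n\geq 2$, the edge $0\to 1$ lies inside $\Lambda^0[n]$ (for instance in the face $d_2$), so $u$ and $\bar e$ agree on $\Delta[1]$ and glue to a map $\tilde u\colon \Lambda^0[n]^{J}_{0\to 1}\to Q$ out of the pushout of Definition \ref{def:pinchedandaugmentedhorns}. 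Because $\Lambda^0[n]^{J}_{0\to 1}\hookrightarrow \Delta[n]^{J}_{0\to 1}$ is a $J$-augmented horn inclusion (the case $j=i=0$), the previous paragraph lets me extend $\tilde u$ over $\Delta[n]^{J}_{0\to 1}$; restricting along the canonical monomorphism $\Delta[n]\hookrightarrow \Delta[n]^{J}_{0\to 1}$ (a pushout of $\Delta[1]\hookrightarrow J$, hence itself a monomorphism) then yields the required extension of $u$. The case of $\Lambda^n[n]$ with invertible edge $n-1\to n$ is identical, gluing $J$ along the edge $n-1\to n$ and using that the resulting inclusion $\Lambda^n[n]^{J}_{n-1\to n}\hookrightarrow \Delta[n]^{J}_{n-1\to n}$ is the case $j=i+1=n$.

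I expect the main obstacle to be the classical ingredient that a categorical pre-isomorphism in a quasi-category extends to $J$: passing from invertibility in $h(Q)$ — which a priori is witnessed only by a zigzag of $2$-simplices — to coherent inverse data living inside $Q$ is a theorem of Joyal of comparable depth to the statement being proved, and a genuinely self-contained argument would have to do that work here (via the usual inductive inner-horn-filling construction of the coherent inverse). The homotopically-behavedness of the Joyal model structure is a secondary, more routine input, and everything else is bookkeeping.
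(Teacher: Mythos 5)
The paper does not prove this statement at all: it is quoted verbatim from Joyal \cite[Thm.~1.3]{Joyal:published} and used as an external input (indeed, Corollary \ref{cor:KbehavedlocalizestoJoyal} and Proposition \ref{prop:qcatsphorn2} are both derived \emph{from} it). So the relevant question is whether your argument is a genuine independent proof, and it is not quite: the step you yourself flag as the ``main obstacle'' --- that a categorical pre-isomorphism in a quasi-category extends along $\Delta[1]\hookrightarrow J$ --- is a theorem of the same depth as the statement being proved, and the most common route to it runs \emph{through} special outer horn filling. Concretely, Lemma \ref{lem:JandKhornpushouts} exhibits $\Delta[1]\hookrightarrow J$ as a sequence of pushouts of horn inclusions, and the horns that occur there are precisely \emph{outer} horns $\Lambda^0[k]$ and $\Lambda^k[k]$ whose distinguished edge is (pre-)invertible; filling them in $Q$ to build $\bar e\colon J\to Q$ is exactly an application of the proposition you are trying to prove. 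Unless you supply an independent proof of the $J$-extension property (Joyal's, Lurie's, or Cisinski's arguments all require real combinatorial work), the argument is a reduction of one form of Joyal's theorem to an equivalent form, not a proof. Everything else --- the gluing of $u$ and $\bar e$ along the $0\to 1$ edge (which does lie in $\Lambda^0[n]$ for $n\geq 2$), the identification of the resulting inclusion as the $J$-augmented horn inclusion with $j=i=0$, and the restriction along $\Delta[n]\hookrightarrow\Delta[n]^J_{0\to 1}$ --- is correct bookkeeping.

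A secondary point needing care: your input that the Joyal model structure is homotopically-behaved cannot be extracted from this paper without circularity, since the paper's only statement to that effect (Corollary \ref{cor:KbehavedlocalizestoJoyal}) is itself proved using Proposition \ref{prop:qcatsphorn}. The fact is true and provable independently --- e.g.\ one of the two sections $\Delta[m]\hookrightarrow\Delta[m+1]^{\ast}_{k\to k+1}$ is a pushout of an inner horn inclusion, hence inner anodyne, so the retraction is a Joyal weak equivalence by 2-out-of-3 and Lemma \ref{lem:sectionsweakequiv} applies --- but you should say this rather than call it ``used in Section \ref{sec:behaved},'' which it is not. With that supplied, your steps through Corollary \ref{cor:characterizedhtpicalMS} correctly show that $J$-augmented horn inclusions are Joyal trivial cofibrations; the sole genuine gap remains the $J$-extension property of categorical pre-isomorphisms.
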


We can interpret this result is as follows: even though quasi-categories do not necessarily have lifts of all outer horns, if we know that the $0\to 1$ edge of the horn $\Lambda^0[n]\to X$ or the $(n-1)\to n$ edge of the horn $\Lambda^n[n]\to X$ horn is ``invertible'' in $X$ in a certain sense, then we do get a lift.

This same intuition applies to $I$-augmented horn inclusions in homotopically-behaved model structures where $I\to \ast$ is a weak equivalence. Since $I$ is weakly equivalent to a point, it makes sense to think of all of the edges of $I$ as invertible. Therefore, a map $\Lambda^{j}[n]^{I}_{i\to i+1}\to X$ (where $j=i$ or $i+1$) is a horn in $X$ where we view the $i\to (i+1)$ edge as invertible. The $I$-augmented horn inclusions being weak equivalences in this model structure implies that if $X$ is fibrant, then we get a lift $\Delta[n]^{I}_{i\to i+1}\to X$ extending that horn.

\subsection{Augmented triangulations}

The goal of this subsection is to address a complication arising from the discussion above. To explain, let us first make the following definition.

\begin{defn}
Given $Z$ and an inclusion $\iota\colon\Delta[1]\hookrightarrow Z$, we say that an edge $e\colon\Delta[1]\to X$ in an arbitrary simplicial set $X$ is an \emph{$Z$-edge} if $e$ factors through $\iota$.
\end{defn}

Given a simplicial set $I$ and $\Delta[1]\hookrightarrow I$ and a homotopically-behaved model structure with $I\to\ast$ a weak equivalence, we have seen above how it makes sense to view $I$-edges in arbitrary simplicial sets as invertible. But then any good notion of ``invertible edges'' should satisfy a \emph{simplicial 2-out-of-3} property: if two edges of a 2-simplex $\Delta[2]\to X$ are invertible, then so is the third edge. The complication is that in an arbitrary simplicial set, the set of $I$-edges do not necessarily satisfy the simplicial 2-out-of-3 property (unless $I=\Delta[1]$). As a minimal counter-example, one can simply take $\Delta[2]$ itself and glue in a copy of $I$ along two of its non-degenerate edges. The takeaway is that no single $I$ can be used to identify which edges we want to view as invertible in an arbitrary simplicial set (except for the special case when $I=\Delta[1]$).

To address this concern, let us characterize the edges that we want to be invertible even if they are not $I$-edges themselves. We begin by defining \emph{unordered triangulations}.

\begin{defn}
Given $n\geq 2$ and a regular $(n+1)$-gon with vertices labeled 0 through $n$ (in no particular order), we say an \emph{unordered triangulation $\mcT$} is a decomposition of this $(n+1)$-gon into $2$-simplices such that every 0-simplex corresponds to a unique vertex of the $(n+1)$-gon and such that the 1-simplices point from lower numbers to higher numbers.
\end{defn}

Figure \ref{fig:triangulation} shows an example of an unordered triangulation of the octagon.
\begin{figure}[h]
\caption{}
\label{fig:triangulation}
\vspace{3mm}
\adjustbox{scale=1}{
\begin{tikzcd}
                        & 7 \arrow[from=ld] &  & 5 \arrow[ll] &              \\
6                       &                &  &                   & 4 \arrow[lu] \arrow[llll]\arrow[lllu] \\
                        &                &  &                   &              \\
0 \arrow[rd] \arrow[uu] &                &  &                   & 2 \arrow[uu] \\
                        & 1 \arrow[rr] \arrow[urrr]\arrow[uuurrr] \arrow[uuul]  &  & 3 \arrow[from=ru]      &             
\end{tikzcd}
}
\end{figure}

\begin{ex}\label{ex:squares}
For $n=2$, there is only one unordered triangulation of the triangle, the standard 2-simplex itself. For $n=3$, there are precisely six unordered triangulations of the square,
\[
\begin{tikzcd}
3                                & 2 \arrow[l] & 3                     & 2 \arrow[l]            \\
0 \arrow[r] \arrow[u] \arrow[ru] & 1 \arrow[u] & 0 \arrow[u] \arrow[r] & 1 \arrow[lu] \arrow[u]
\end{tikzcd}
\]
and
\[
\begin{tikzcd}
2 \arrow[r]                      & 3           & 2 \arrow[r]           & 3                      & 3                     & 1 \arrow[l] \arrow[d] & 3                                & 1 \arrow[d] \arrow[l] \\
0 \arrow[r] \arrow[u] \arrow[ru] & 1 \arrow[u] & 0 \arrow[u] \arrow[r] & 1 \arrow[lu] \arrow[u] & 0 \arrow[r] \arrow[u] & 2 \arrow[lu]          & 0 \arrow[r] \arrow[u] \arrow[ru] & 2\nospace{.}                   
\end{tikzcd}
\]
\end{ex}

\begin{rmk}
For those familiar with 2-Segal objects, we note that these unordered triangulations are similar to the triangulations used to define the 2-Segal condition, except that in the 2-Segal definition one requires the vertices of the $(n+1)$-gon be cyclically ordered with the exception of the $0\to n$ edge. The first two triangulations in Example \ref{ex:squares} are the triangulations of the square used in the 2-Segal condition.
\end{rmk}

We now define \emph{augmented unordered triangulations} that characterize edges that we want to view as invertible.

\begin{defn}
Given a simplicial set $I$ and $\Delta[1]\hookrightarrow I$, an \emph{$I$-augmented unordered triangulation $\mcT^I$} is an unordered triangulation $\mcT$ with a copy of $I$ glued in along all but one of the outer edges. We say that $\mcT^I$ has \emph{size $n$} if there are $n+1$ outer edges (and so $n$ copies of $I$ glued in). We consider $I$ itself to be an $I$-augmented unordered triangulation of size 1. We say that an edge $\Delta[1]\to X$ in an arbitrary simplicial set is an \emph{almost-$I$-edge} if it is a $\mcT^I$-edge for some $\mcT^I$.
\end{defn}

In the above definition, all but one of the outer edges being invertible (since they are $I$-edges) means that we should consider the remaining outer edge to be invertible as well by iterated simplicial 2-out-of-3 arguments.
\begin{figure}[h]
\caption{}
\label{fig:triangulationbreakdown}
\vspace{3mm}
\begin{tikzcd}
z &                                                                                                   & y \arrow[ll, no head] \arrow[ll, "\mathcal{U}^{I}", no head, dashed, bend right=60] \\
  & x \arrow[lu, no head] \arrow[ru, "e"'] \arrow[lu, "\mathcal{V}^{I}"', no head, dashed, bend left=74] &                                                                                   
\end{tikzcd}
\end{figure}
The idea is that if we consider $I$-edges invertible, then an edge $e\colon\Delta[1]\to X$ in an arbitrary simplicial set is forced to be invertible by iterated application of the simplicial 2-out-of-3 property precisely if it is an almost-$I$-edge. Figure \ref{fig:triangulationbreakdown} indicates the inductive argument affirming this intuition, which we spell out in the following propositions.

\begin{prop}\label{prop:almost2outof3}
Almost-$I$-edges satisfy the simplicial 2-out-of-3 property. More precisely, if $\Delta[2]\to X$ is a 2-simplex in an arbitrary simplicial set where two of the faces are almost-$I$-edges, then so is the third.
\end{prop}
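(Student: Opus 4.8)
The plan is to prove this directly, by gluing augmented unordered triangulations together as suggested by Figure~\ref{fig:triangulationbreakdown}. Fix a $2$-simplex $\sigma\colon\Delta[2]\to X$ and name its three faces $a=d_2\sigma$ (the edge $0\to1$), $b=d_0\sigma$ (the edge $1\to2$), and $c=d_1\sigma$ (the edge $0\to2$). By assumption two of these are almost-$I$-edges. Unwinding the definitions, saying that an edge is an almost-$I$-edge means precisely that there is an $I$-augmented unordered triangulation $\mcU^I$ together with a map $\mcU^I\to X$ whose restriction along the bare outer edge inclusion $\Delta[1]\hookrightarrow\mcU^I$ is that edge. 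So for each of the two edges in question we have such an augmented triangulation with a map to $X$. There are three cases according to which pair among $\{a,b,c\}$ is assumed almost-$I$, and in each I will build a single $I$-augmented unordered triangulation $\mcT^I$, with a map $\mcT^I\to X$, through which the remaining edge factors along its bare outer edge.

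The construction is uniform across the three cases. In the case that $a$ and $b$ are the almost-$I$-edges, witnessed by $\mcU^I\to X$ and $\mcV^I\to X$, I form $\mcT^I$ as the successive pushouts in $\sSet$ obtained by gluing the bare outer edge of $\mcU^I$ to the edge $a$ of $\Delta[2]$ and the bare outer edge of $\mcV^I$ to the edge $b$ of $\Delta[2]$; the edge $c$ will then be the unique bare outer edge of the result. (When one of the witnesses is $I$ itself, which is permitted as an augmented triangulation of size $1$, gluing it in just amounts to attaching a copy of $I$ along the relevant edge of $\Delta[2]$.) The other two cases are the same, permuting which two edges of $\Delta[2]$ are glued along and which stays bare.

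The substantive step is to check that this $\mcT^I$ really is an $I$-augmented unordered triangulation, with $c$ as its bare outer edge. Its underlying complex is obtained by gluing the underlying polygon triangulations of the two witnesses onto the triangle $\Delta[2]$ along single boundary edges; such a tree-like gluing of polygon triangulations along boundary edges is again a polygon triangulation, whose outer edges are the non-glued outer edges of the pieces, and the copies of $I$ carried by the two witnesses sit along all of these outer edges except $c$, giving the right count (size $=$ sum of the sizes of the witnesses). The one condition that needs genuine checking is that the $1$-simplices of the underlying triangulation point from lower-numbered to higher-numbered vertices, i.e., that a compatible total order on its vertices exists; equivalently, that its directed $1$-skeleton is acyclic. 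I would prove that gluing two acyclic directed graphs along a single common edge, whose two endpoints are the only shared vertices, yields an acyclic graph: a directed cycle in the glued graph would switch between the two pieces only at those endpoints, hence decompose into two oppositely oriented directed paths between them, and one of these paths together with the shared edge would form a directed cycle inside one of the two acyclic pieces---a contradiction. Doing the two gluings one at a time keeps us in exactly this situation throughout.

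Finally, the maps $\sigma$, $\mcU^I\to X$, and $\mcV^I\to X$ agree on the edges along which the gluing was performed (that agreement is exactly the statement that the appropriate face of $\sigma$ equals the witnessing edge), so by the universal property of the pushout they assemble into a map $\mcT^I\to X$. Its restriction along the bare outer edge inclusion $\Delta[1]\hookrightarrow\mcT^I$ is $c$, so $c$ is a $\mcT^I$-edge and hence an almost-$I$-edge; the other two cases are identical. I expect the main obstacle to be the bookkeeping of the previous paragraph---verifying carefully that the pushout satisfies the combinatorial definition of an $I$-augmented unordered triangulation, and in particular that the lower-to-higher orientation of the $1$-simplices survives the gluing---rather than anything conceptually difficult.
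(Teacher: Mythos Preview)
Your proof is correct and follows the same construction as the paper: glue the two witnessing $I$-augmented unordered triangulations $\mcU^I$ and $\mcV^I$ onto the appropriate faces of $\Delta[2]$, leaving the third edge bare, and observe that the maps to $X$ assemble. The paper's proof is two sentences and simply asserts that the result is again an $I$-augmented unordered triangulation; your additional verification that the glued object satisfies the combinatorial definition---in particular the acyclicity argument ensuring a compatible linear order on vertices---fills in exactly the detail the paper suppresses.
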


\begin{proof}
Let $\mcU^I$ and $\mcV^I$ be $I$-augmented unordered triangulations that the given edges factor through. Call the remaining edge $e$. Then we can define $\mcT^I$ by gluing $\mcU^I$ and $\mcV^I$ to the appropriate faces of $\Delta[2]$ as in Figure \ref{fig:triangulationbreakdown}, making the remaining edge a $\mcT^I$-edge.
\end{proof}

\begin{prop}
If the map $I\to \ast$ is a weak equivalence in a given homotopically-behaved model structure, then so is every $\mcT^I\to \ast$ and every generalized $\mcT^I$-augmented horn inclusion.
\end{prop}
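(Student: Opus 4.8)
The plan is to reduce the assertion about generalized $\mcT^I$-augmented horn inclusions to the assertion that $\mcT^I\to\ast$ is a weak equivalence, and then to prove the latter by induction on the size $n$ of $\mcT^I$. For the reduction, note that the special outer edge of $\mcT^I$ is a nondegenerate $1$-simplex with distinct endpoints and hence determines a monomorphism $\Delta[1]\hookrightarrow\mcT^I$; so once $\mcT^I\to\ast$ is known to be a weak equivalence, Proposition \ref{prop:generalizedIaugmented} applied with $I$ replaced by $\mcT^I$ (using that our model structure is homotopically-behaved) shows that every generalized $\mcT^I$-augmented horn inclusion is a weak equivalence. Equivalently, one can use Corollary \ref{cor:characterizedhtpicalMS} for the ordinary $\mcT^I$-augmented horn inclusions and Proposition \ref{prop:generalizedhornpushoutsofhorns} to pass to the generalized ones.

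For the main claim, the base case $n=1$ is $\mcT^I=I$, which is the hypothesis. For $n\geq 2$, mirroring Figure \ref{fig:triangulationbreakdown}, let $\sigma\cong\Delta[2]$ be the unique $2$-simplex of the underlying triangulation that contains the special edge $e$, and let $f_1,f_2$ be its other two edges. Each $f_k$ is either a boundary edge of the polygon, along which a copy of $I$ is glued --- then set $G_k:=I$ --- or a diagonal, along which cutting the polygon produces a proper sub-polygon whose induced triangulation, with copies of $I$ glued along all of its boundary edges except $f_k$, is an augmented unordered triangulation $G_k:=\mcT_k^I$ of size strictly less than $n$. In every case $G_k$ carries a distinguished monomorphism $\Delta[1]\hookrightarrow G_k$ (the edge $f_k$), the map $G_k\to\ast$ is a weak equivalence (by hypothesis when $G_k=I$, by the inductive hypothesis when $G_k=\mcT_k^I$), and hence every $G_k$-augmented horn inclusion is a weak equivalence by Corollary \ref{cor:characterizedhtpicalMS}. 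The structural point is that $\mcT^I$ is obtained from $\sigma$ by gluing $G_1$ along $f_1$ and then $G_2$ along $f_2$; since $G_1\cap\sigma=f_1$, $G_2\cap\sigma=f_2$, and $G_1\cap G_2$ is exactly the vertex of $\sigma$ not lying on $e$, this is an iterated pushout along monomorphisms, and it accounts for every simplex and every glued-in copy of $I$ in $\mcT^I$.

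To produce the weak equivalence, choose $k$ with $f_k$ one of the two ``short'' edges $j\to j+1$ of $\sigma$ --- exactly one of the three edges of $\Delta[2]$ is the long edge $0\to 2$, so at least one of $f_1,f_2$ qualifies --- say $k=1$. Letting $\Lambda\subseteq\sigma$ be the horn omitting the face $e$, a short check shows that the index of $\Lambda$ lies in $\{j,j+1\}$, so the inclusion of ``$\Lambda$ with $G_1$ glued along $f_1$'' into ``$\sigma$ with $G_1$ glued along $f_1$'' is a $G_1$-augmented horn inclusion, hence a weak equivalence. Gluing $G_2$ along the edge $f_2\subseteq\Lambda$ to both of these and using left properness (Lemma \ref{lem:leftproper}), we get that the inclusion into $\mcT^I$ of the resulting subcomplex --- which is just $G_1$ and $G_2$ glued along the single vertex of $\sigma$ off $e$, call it $G_1\cup G_2$ --- is a weak equivalence. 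Finally $\ast\hookrightarrow G_2$ is a weak equivalence by 2-out-of-3 with $G_2\to\ast$, so left properness makes $G_1\hookrightarrow(G_1\cup G_2)$ a weak equivalence, whence $(G_1\cup G_2)\to\ast$ is a weak equivalence by 2-out-of-3 with $G_1\to\ast$, and a last application of 2-out-of-3 yields that $\mcT^I\to\ast$ is a weak equivalence. I expect the fussiest part to be the bookkeeping of the previous paragraph: describing the sub-polygons obtained by cutting at whichever of $f_1,f_2$ are diagonals, checking that their induced augmentations have $f_1$, $f_2$ as their special edges, and matching up orientations so that the $G_k$-augmented-horn terminology genuinely applies; once that is set up, the homotopical steps are exactly as above.
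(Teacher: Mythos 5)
Your proof is correct and follows essentially the same route as the paper: induction on the size of $\mcT^I$, decomposing it as the $2$-simplex containing the special edge $e$ with two smaller augmented pieces glued along its other two edges, showing the inclusion of their wedge is a pushout of an augmented $2$-horn inclusion, concluding $\mcT^I\to\ast$ is a weak equivalence by left properness and 2-out-of-3, and then invoking Proposition \ref{prop:generalizedIaugmented} for the generalized $\mcT^I$-augmented horns. Your extra bookkeeping (checking that the horn index of $\Lambda$ matches a short edge so the augmented-horn terminology applies, and splitting the cases $G_k=I$ versus a smaller triangulation) is a more explicit version of what the paper leaves implicit via its convention that $I$ is itself an augmented triangulation of size $1$.
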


\begin{proof}
We proceed by induction on the size of $\mcT^I$. The base case of size 1, where $\mcT^I=I$, is covered by Proposition \ref{prop:generalizedIaugmented}.

For $\mcT^I$ of size bigger than 1, denote by $\begin{tikzcd} x \arrow[r, "e"] & y\end{tikzcd}$ the outer edge of $\mcT^I$ without a copy of $I$. Then we can break down $\mcT^I$ into the 2-simplex of which $e$ is a face plus $\mcU^I$ and $\mcV^I$ of smaller size, as in Figure \ref{fig:triangulationbreakdown}.
By the inductive hypothesis, we know that $\mcU^I\to \ast$ and $\mcV^I\to \ast$ are weak equivalences, so the inclusion of the point $z$ into $\mcU^I$ and $\mcV^I$ is as well, and so taking the pushout we see that $\ast \hookrightarrow \mcU^I \cup \mcV^I$ is as well. But then $\mcU^I \cup \mcV^I\hookrightarrow \mcT^I$ is a pushout of a $\mcU^I$-augmented 2-horn inclusion (or a $\mcV^I$-augmented 2-horn inclusion). Thus we see that $\ast\to \mcT^I$ is a weak equivalence and hence $\mcT^I\to \ast$ is also by the 2-out-of-3 property. By applying Proposition \ref{prop:generalizedIaugmented} to $\mcT^I$, we see that all generalized $\mcT^I$-augmented horn inclusions are weak equivalences.
\end{proof}

\begin{defn}
We say that a (generalized) $\mcT^I$-augmented horn inclusion is a \emph{(generalized) almost-$I$-augmented horn inclusion}.
\end{defn}

\begin{rmk}\label{rmk:countability}
Given a simplicial set $I$ and $\Delta[1]\hookrightarrow I$, there are countably many $I$-augmented unordered triangulations $\mcT^I$, up to isomorphism. Thus, there are countably many almost-$I$-augmented horn inclusions up to isomorphism. More generally, given any countable set of inclusions $\{\Delta[1]\hookrightarrow I_r\}_{r\geq 1}$, the set of all almost-$I_r$-augmented horn inclusions for varying $r\geq 1$ is still countable.
\end{rmk}

We have thus shown that these almost-$I$-augmented horn inclusions are forced to be weak equivalences in a homotopically-behaved model structure where $I\to \ast$ is a weak equivalence. Our next task is to show that we can apply Cisinski's machinery to this class of maps to get a model structure for certain $I$.

\section{Minimal homotopically-behaved model structures}\label{sec:behaved}

In this section we apply Cisinski's machinery to produce model structures whose fibrant objects are those with lifts of particular augmented horns. We do so using the new concept of a pointwise exact cylinder. We then show that these model structures are ``minimal'' in a certain sense, both with respect to being homotopically-behaved and with respect to the chosen exact cylinders.

\subsection{Pointwise cylinders}\label{sub:pointwise}
Our only example of an exact cylinder given above was of the form $X\mapsto I\times X$ for some simplicial set $I$. In this subsection, we describe a slightly more complex kind of exact cylinder which we use to construct our model structure.

Let $\sk_0$ denote the endofunctor of $\sSet$ that sends a simplicial set $X$ to its 0-skeleton (the simplicial set that has the same 0-simplices as $X$ but no non-degenerate higher simplices). Given a monomorphism $\iota\colon\Delta[1]\hookrightarrow I$, we let $\iota\odot X$ be the pushout
\[
\begin{tikzcd}
{\Delta[1]\times\sk_0 X} \arrow[d, "\iota\times\sk_0 X"', hook] \arrow[r, hook] & {\Delta[1]\times X} \arrow[d, hook] \\
I\times\sk_0 X \arrow[r, hook]                                             & \iota\odot X \nospace{.}                         
\end{tikzcd}
\]
In other words, the simplicial set $\iota\odot X$ is $\Delta[1]\times X$ with a copy of $I$ glued in along $\Delta[1] \times \{x\}$ for each 0-simplex $x$ of $X$.

\begin{ex}
When $X=\Delta[2]$, we glue in a copy of $I$ along the three red edges of $\Delta[1]\times \Delta[2]$ depicted in Figure \ref{fig:threerededges}.
\begin{figure}[h]
\caption{}
\label{fig:threerededges}
\vspace{3mm}
\begin{tikzcd}[row sep=small]
\bullet \arrow[rd] \arrow[rr] \arrow[dd, red] &                                  & \bullet \arrow[dd, red] \\
                                            & \bullet \arrow[ru]                    &                       \\
\bullet \arrow[rd] \arrow[rr]                    &                                  & \bullet                    \\
                                            & \bullet \arrow[ru] \arrow[from=uu, crossing over, red] &                      
\end{tikzcd}
\end{figure}
\end{ex}

\begin{prop}
The above description of $\iota\odot-$ defines an exact cylinder.
\end{prop}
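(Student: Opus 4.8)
The plan is to first identify $\iota\odot X$ as a \emph{subobject} of $I\times X$, after which the functorial cylinder structure and the axioms (DH1)--(DH2) follow essentially by inspection. The maps $\Delta[1]\times X\xrightarrow{\iota\times X}I\times X$ and $I\times\sk_0 X\hookrightarrow I\times X$ (the latter induced by $\sk_0 X\hookrightarrow X$) agree on $\Delta[1]\times\sk_0 X$, and the square exhibiting $\Delta[1]\times\sk_0 X$ as their intersection in $I\times X$ is a pullback, being the product of the monomorphisms $\iota$ and $\sk_0 X\hookrightarrow X$. Hence the canonical comparison $\iota\odot X\to I\times X$ out of the defining pushout is a monomorphism, and it identifies $\iota\odot X$ with the union subobject $(\Delta[1]\times X)\cup(I\times\sk_0 X)\cin I\times X$ (a pushout of two subobjects along their intersection is their join). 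This identification is clearly natural in $X$.

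Under this identification the functorial cylinder structure is transparent: $\partial_\varepsilon\colon X\to\iota\odot X$ is the inclusion of $\{\iota(\varepsilon)\}\times X$; the structure map $\iota\odot X\to X$ is induced on the defining pushout by the projection $\Delta[1]\times X\to X$ together with $I\times\sk_0 X\to\sk_0 X\hookrightarrow X$, which agree on $\Delta[1]\times\sk_0 X$; the composite $X\sqcup X\to\iota\odot X\to X$ is then the fold map; and $(\partial_0,\partial_1)_X$ is a monomorphism because post-composing with $\iota\odot X\hookrightarrow I\times X$ gives the inclusion of the disjoint subobjects $\{\iota(0)\}\times X$ and $\{\iota(1)\}\times X$ (disjoint since $\iota(0)\neq\iota(1)$, as $\iota$ is mono). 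Naturality of all of these maps is routine, so $\iota\odot-$ is a functorial cylinder.

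For (DH1): the functor $\iota\odot-$ preserves small colimits since $\Delta[1]\times-$, $I\times-$, and $\sk_0$ are each left adjoints and pushouts commute with colimits. It preserves monomorphisms since, for a monomorphism $j\colon A\hookrightarrow B$, the monomorphism $I\times j$ carries the subobject $\iota\odot A$ of $I\times A$ into the subobject $\iota\odot B$ of $I\times B$ (it sends $\Delta[1]\times A$ into $\Delta[1]\times B$ and $I\times\sk_0 A$ into $I\times\sk_0 B$); thus $\iota\odot A\hookrightarrow I\times A\hookrightarrow I\times B$ is a monomorphism factoring through $\iota\odot B\hookrightarrow I\times B$, forcing $\iota\odot j$ to be a monomorphism.

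For (DH2), fix a monomorphism $j\colon A\hookrightarrow B$ and $\varepsilon\in\{0,1\}$; I would show the square with vertices $A,B,\iota\odot A,\iota\odot B$ is a pullback by the pasting law applied to
\[
\begin{tikzcd}
A \arrow[r, "j"] \arrow[d, "\partial_\varepsilon"'] & B \arrow[d, "\partial_\varepsilon"] \\
\iota\odot A \arrow[r, "\iota\odot j"] \arrow[d, hook] & \iota\odot B \arrow[d, hook] \\
I\times A \arrow[r, "I\times j"'] & I\times B\nospace{.}
\end{tikzcd}
\]
The bottom square is a pullback because intersecting $\iota\odot B=(\Delta[1]\times B)\cup(I\times\sk_0 B)$ with $I\times A$ inside $I\times B$ gives $(\Delta[1]\times A)\cup(I\times\sk_0 A)=\iota\odot A$, using $\sk_0 A=A\cap\sk_0 B$ and distributivity of meets over joins of subobjects. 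The outer rectangle is a pullback because its vertical composites are the inclusions $Z\mapsto\{\iota(\varepsilon)\}\times Z$, and such ``constant sections'' are stable under pullback along $I\times j$ (a generalized point of the pullback amounts to a generalized point of $A$, its $I$- and $B$-coordinates being forced). By pasting, the top square is a pullback, which is exactly (DH2). I do not anticipate a genuine obstacle here; the only step requiring care is the identification of $\iota\odot X$ with $(\Delta[1]\times X)\cup(I\times\sk_0 X)\cin I\times X$, since without it the monomorphism-preservation half of (DH1) and the pullback condition (DH2) would instead demand fiddlier cube arguments about pushout squares rather than the short diagram-chases above.
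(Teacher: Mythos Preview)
Your proof is correct. Your approach differs from the paper's in that you first identify $\iota\odot X$ with the subobject $(\Delta[1]\times X)\cup(I\times\sk_0 X)$ of $I\times X$ and then derive everything from subobject-lattice arguments and the pasting law, whereas the paper works directly with the pushout description: it argues that $\iota\odot-$ is a pushout of the three functors $\Delta[1]\times-$, $\Delta[1]\times\sk_0(-)$, and $I\times\sk_0(-)$, each of which preserves colimits and monomorphisms, and then checks (DH2) by a one-line element-level verification that $(\iota\odot A)\cap(\{\varepsilon\}\odot B)=\{\varepsilon\}\odot A$ inside $\iota\odot B$. Your route makes the monomorphism-preservation in (DH1) fully transparent (the paper's ``pushout of mono-preserving functors preserves monos'' is true here but not entirely automatic in general), and your pasting argument for (DH2) is a clean categorical reformulation of the paper's direct intersection check. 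The paper's proof is shorter; yours is more self-contained.
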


\begin{proof}
Functoriality follows from $\iota\odot-$ being a pushout of the functors $I\times \sk_0(-)$, $\Delta[1]\times \sk_0(-)$, and $\Delta[1]\times -$. These three functors preserve monomorphisms and small colimits, so their pushout does as well, meaning that $\iota\odot-$ satisfies axiom (DH1) from Definition \ref{def:exactcylinder}. By Remark \ref{rmk:exactpullback}, to check axiom (DH2) we simply observe that for any inclusion of simplicial sets $A\hookrightarrow B$, the simplices of $\iota\odot B$ that are in both $\iota\odot A$ and $\{\varepsilon\}\odot B$ are precisely those in $\{\varepsilon\}\odot A$.
\end{proof}

\begin{defn}
We call the exact cylinder $\iota\odot-$ the \emph{pointwise cylinder} for the inclusion $\iota\colon \Delta[1]\hookrightarrow I$.
\end{defn}

\begin{ex}
The functor $(\id_{\Delta[1]})\odot -$ is simply the functor $\Delta[1]\times -$, since we do not glue anything extra onto the vertical edges in this case.
\end{ex}

\begin{ex}\label{ex:K}
Let $P$ be the pushout on the left below
\[\begin{tikzcd}[row sep=small, column sep=small]
	{\Delta[1]} && {\Delta[0]} && {\Delta[1]} & {\Delta[2]} & P \\
	&&&& {\Delta[2]} \\
	{\Delta[2]} && P && P && K\nospace{,}
	\arrow["{d^1}"', from=1-1, to=3-1]
	\arrow["p"', from=3-1, to=3-3]
	\arrow[from=1-3, to=3-3]
	\arrow["{s^0}", from=1-1, to=1-3]
	\arrow["p"', from=2-5, to=3-5]
	\arrow["p", from=1-6, to=1-7]
	\arrow["{d^2}"', from=1-5, to=2-5]
	\arrow["{d^0}", from=1-5, to=1-6]
	\arrow["{i_2}", from=1-7, to=3-7]
	\arrow["{i_0}"', from=3-5, to=3-7]
\end{tikzcd}\]
and then let $K$ be the pushout on the right,
with $\kappa\colon \Delta[1]\hookrightarrow K$ being the diagonal composite of the right square. We can view the simplicial set $K$ as follows
\[
\begin{tikzcd}[row sep=small]
b \arrow[dd,"f"'] \arrow[rr, dotted]        &  & b \arrow[dd,"g"] \\
                                       &  &              \\
a \arrow[rruu, "\kappa"] \arrow[rr, dotted] &  & a     \nospace{,}      
\end{tikzcd}
\]
where the dotted arrows indicate degenerate edges. We can think of $K$ as the edge $\kappa$ with a left inverse $g$ and a right inverse $f$ glued in. We will use the pointwise cylinder $\kappa\odot-$ in later sections.
\end{ex}

\begin{ex}
Recall that $J$ is the nerve of the free-living isomorphism $\mathbb{I}$. Through a slight abuse of notation, we use $J\odot -$ to denote the pointwise cylinder for the inclusion $\Delta[1]\hookrightarrow J$.
\end{ex}

\subsection{Constructing the model structures}\label{sub:augMS}

The rest of this section is devoted to proving the existence of two homotopically-behaved model structures. One is the minimal homotopically-behaved model structure, whose fibrant objects are precisely the simplicial sets with lifts of $J$-augmented horn inclusions. The other is a localization of this model structure at $K\to \ast$ (where $K$ is the simplicial set from Example \ref{ex:K}), whose fibrant objects are precisely the simplicial sets with lifts of $K$-augmented horn inclusions. The key to this result is the following proposition; the existence of our desired model structures then follows from Cisinski's Theorem \ref{thm:cisinskiMS}.

\begin{prop}\label{prop:JandKanodyne} \ 
\begin{enumerate}
    \item The almost-$J$-augmented horn inclusions together with the map $\{0\}\hookrightarrow J$ generate a $(J\odot -)$-anodyne class.
    \item The almost-$K$-augmented horn inclusions together with the maps $\{\varepsilon\}\hookrightarrow K$ for $\varepsilon=0,1$ generate a $(\kappa\odot-)$-anodyne class.
\end{enumerate}
\end{prop}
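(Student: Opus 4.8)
The goal is to verify the axioms (An1) and (An2) from Definition \ref{def:anodyneclass} for each of the two candidate generating sets, using the reformulations (An1$'$) and (An2$'$) from Lemma \ref{lem:altaxioms}. The plan is to handle both cases in parallel as much as possible, writing $I$ for $J$ or $K$ and $\iota$ for $\Delta[1]\hookrightarrow I$ (with $\{0\}\hookrightarrow J$ replaced by $\{0\},\{1\}\hookrightarrow K$ in the second case), and to let $\Lambda$ denote the corresponding generating set and $\mathsf{An} = {}^\lifts(\Lambda^\lifts)$ the generated class. For (An1$'$), I would analyze the pushout-product map $(\iota\odot\partial\Delta[n])\cup(\{\varepsilon\}\odot\Delta[n])\hookrightarrow \iota\odot\Delta[n]$ directly from the pushout definition of $\iota\odot-$. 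Since $\iota\odot X$ is $\Delta[1]\times X$ with a copy of $I$ glued along $\Delta[1]\times\{x\}$ for each vertex $x$, this pushout-product map decomposes as: first the ordinary pushout-product $(\Delta[1]\times\partial\Delta[n])\cup(\{\varepsilon\}\times\Delta[n])\hookrightarrow \Delta[1]\times\Delta[n]$, which is a composite of pushouts of ordinary (and in fact outer) horn inclusions by the classical analysis, and then the gluing of copies of $I$ along the new vertical edges coming from the interior vertices of $\Delta[n]$ not already in $\partial\Delta[n]$ — but every vertex of $\Delta[n]$ lies in $\partial\Delta[n]$ when $n\geq 1$, so for $n\geq 1$ no further gluing occurs on that part and the only subtlety is bookkeeping which horn inclusions appear and checking they are almost-$I$-augmented horn inclusions (their spine edges being $I$-edges hence almost-$I$-edges). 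The degenerate case $n=0$ is exactly the map $\{\varepsilon\}\hookrightarrow I$, which we have thrown into $\Lambda$ precisely to cover it; this is why $\{0\}\hookrightarrow J$ appears in part (1) and both $\{\varepsilon\}\hookrightarrow K$ appear in part (2) (the cylinder $\kappa$ is not symmetric, so one needs both endpoints).

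For (An2$'$), I would take each generating map $A\hookrightarrow B$ in $\Lambda$ and show that $(\iota\odot A)\cup(\partial\iota\odot B)\hookrightarrow \iota\odot B$ lies in $\mathsf{An}$. Again unwinding the definition of $\iota\odot-$, this map is built from the ordinary pushout-product $(\Delta[1]\times A)\cup(\partial\Delta[1]\times B)\hookrightarrow \Delta[1]\times B$ together with gluing copies of $I$ along the vertical edges over vertices of $B$ that are not vertices of $A$. For the endpoint maps $\{\varepsilon\}\hookrightarrow I$ this is a direct small computation; for the almost-$I$-augmented horn inclusions $\Lambda^S[n]^{\mcT^I}_{i\to i+1}\hookrightarrow \Delta[n]^{\ast}_{i\to i+1}$ — which by Proposition \ref{prop:generalizedhornpushoutsofhorns} and the size-induction it suffices to treat for the basic almost-$I$-augmented horn inclusions (those with $|S|=n$) — one expresses the pushout-product as a finite composite of pushouts of maps that are again of the allowed form: outer-type horn inclusions from the $\Delta[1]\times(-)$ factor, almost-$I$-augmented horn inclusions of higher dimension (with the ambient almost-$I$-augmented triangulation enlarged by the new $I$-copies glued along vertical edges), and the endpoint inclusions $\{\varepsilon\}\hookrightarrow I$. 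The key structural point making this work is that the class of almost-$I$-augmented horn inclusions is stable under the operation of gluing in additional copies of $I$ along edges — an almost-$I$-augmented unordered triangulation with an extra $I$ glued on a relevant edge is again one (possibly of larger size), as follows from Proposition \ref{prop:almost2outof3} and the closure of $\mcT^I$'s under the gluing construction in Figure \ref{fig:triangulationbreakdown}.

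Since (An2$'$) allows the target to lie in the already-generated class $\mathsf{An}$ rather than in $\Lambda$ itself, I can freely use that pushouts and transfinite composites of maps in $\mathsf{An}$ stay in $\mathsf{An}$, which is what makes the decomposition strategy legitimate. Concretely, the order of operations I would carry out is: (i) fix notation and record the explicit simplicial description of $\iota\odot X$ and of the two pushout-product maps in question; (ii) recall the classical decomposition of $(\Delta[1]\times\partial\Delta[n])\cup(\{\varepsilon\}\times\Delta[n])\hookrightarrow\Delta[1]\times\Delta[n]$ and of $(\Delta[1]\times A)\cup(\partial\Delta[1]\times B)\hookrightarrow\Delta[1]\times B$ into pushouts of (outer) horn inclusions, carefully in the non-symmetric setting so it is valid for the single endpoint $\varepsilon$ needed in the $J$ case; (iii) track, layer by layer in that filtration, which new vertical $I$-copies get glued on, and reinterpret each resulting elementary building block as either an almost-$I$-augmented horn inclusion (for that one must check the relevant spine/outer edges are almost-$I$-edges, which holds because they are $I$-edges by construction or become so after the gluing) or one of the endpoint maps $\{\varepsilon\}\hookrightarrow I$; and (iv) assemble, citing closure of $\mathsf{An}$ under pushout and transfinite composition. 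The main obstacle I anticipate is step (iii): the combinatorial bookkeeping of exactly which degenerate/non-degenerate cells appear when one pushes $I$-copies through the shuffle decomposition of the prism $\Delta[1]\times\Delta[n]$, and verifying in each case that the glued-in edges land among the edges that are permitted to be $I$-augmented (i.e. the $(j-1)\to j$ or $j\to(j+1)$ edge of the relevant horn, up to the flexibility afforded by almost-$I$-triangulations). A secondary subtlety is the asymmetry of $\kappa$ in part (2): unlike $J$, the cylinder $K$ does not admit an automorphism swapping its two endpoints, so both endpoint inclusions $\{0\}\hookrightarrow K$ and $\{1\}\hookrightarrow K$ must be included in the generating set and both $\varepsilon=0,1$ instances of (An1$'$) and (An2$'$) checked by hand rather than deduced from one another by symmetry.
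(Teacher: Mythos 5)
Your overall architecture matches the paper's: verify (An1$'$) via the shuffle decomposition of the prism $\Delta[1]\times\Delta[n]$ (with the $n=0$ case accounting for the endpoint inclusions in the generating set), and verify (An2$'$) generator by generator; your treatment of (An2$'$) for the almost-$I$-augmented horn generators is essentially the paper's Lemma \ref{lem:an2fortherest} and Corollary \ref{cor:an2fortherest}, and your remark about the asymmetry of $\kappa$ forcing both endpoints into the generating set for $K$ is correct. However, there is a genuine gap: you dismiss the (An2$'$) instance for the endpoint maps $\{\varepsilon\}\hookrightarrow I$ as ``a direct small computation,'' when this is in fact the hardest and most $I$-specific part of the proof. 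For $A\hookrightarrow B$ equal to $\{\varepsilon\}\hookrightarrow I$, the map in question is
\[
(I\odot\{\varepsilon\})\cup(\partial I\odot I)\hookrightarrow I\odot I,
\]
which for $I=J$ involves the prism over the \emph{infinite-dimensional} simplicial set $J$ (nondegenerate simplices in every dimension), and for $I=K$ requires a careful hand analysis of the $3$-simplices of $\Delta[1]\times K$. The paper isolates exactly this condition as ``anodyne-ready'' and devotes an entire subsection to it (Lemmas \ref{lem:JandKhornpushouts}, \ref{lem:claimforJ}, \ref{lem:claimforK}).

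Beyond the sheer size of the computation, your proposal is missing the key mechanism that makes it work: one cannot realize this inclusion directly as a composite of pushouts of $I$-augmented horns, because the diagonal edges of the prism $\Delta[1]\times I$ are \emph{not} $I$-edges. The paper's strategy is to (a) decompose $(\partial\Delta[1]\times I)\cup(\Delta[1]\times\{\varepsilon\})\hookrightarrow\Delta[1]\times I$ into pushouts of \emph{ordinary} $k$-horn inclusions, (b) observe that $\Delta[1]\hookrightarrow I$ is itself a composite of ordinary horn pushouts so that the final missing vertical copy of $I$ can be glued on, and then (c) \emph{upgrade} all of these ordinary horn pushouts to almost-$I$-augmented horn pushouts via Lemma \ref{lem:allalmostpushout}, which uses the simplicial 2-out-of-3 property (Proposition \ref{prop:almost2outof3}) to show that every edge appearing in the filtration is an almost-$I$-edge even though it need not be an $I$-edge. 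This upgrading step is precisely why the generating set must consist of \emph{almost}-$I$-augmented horns rather than just $I$-augmented horns, a point your proposal gestures at (``or become so after the gluing'') but does not identify as the load-bearing idea. Without steps (a)--(c) spelled out for $J$ and for $K$ separately, the proof is incomplete.
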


Recall that we defined a $(I\otimes -)$-anodyne class in Definition \ref{def:anodyneclass}, but in Lemma \ref{lem:altaxioms} we reformulated the axioms to be easier to check. Thus, proving this proposition amounts to verifying the axioms from Lemma \ref{lem:altaxioms}. It turns out that (An1$'$) and all but one case of (An2$'$) can be proved just as easily for arbitrary $\Delta[1]\hookrightarrow I$ in the place of $\Delta[1]\hookrightarrow J$ or $\kappa\colon\Delta[1]\hookrightarrow K$. Let us give a name to the $\Delta[1]\hookrightarrow I$ such that the remaining case of (An2$'$) is satisfied.

\begin{defn}
We say that an inclusion $\Delta[1]\hookrightarrow I$ is \emph{anodyne-ready} if the maps
\[
(\partial I \odot I)\cup (I\odot\{\varepsilon\})\hookrightarrow I\odot I
\]
for $\varepsilon=0,1$ are a sequence of pushouts of almost-$I$-augmented horn inclusions.
\end{defn}

We can thus break down the proof of Proposition \ref{prop:JandKanodyne} into the following two pieces.

\begin{prop}\label{prop:anodynereadyworks}
If $\iota\colon\Delta[1]\hookrightarrow I$ is anodyne-ready, then the almost-$I$-augmented horn inclusions together with the maps $\{\varepsilon\}\hookrightarrow I$ for $\varepsilon=0,1$ generate an $(\iota\odot-)$-anodyne class.
\end{prop}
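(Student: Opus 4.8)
The plan is to verify the axioms (An1$'$) and (An2$'$) from Lemma \ref{lem:altaxioms} for the set $\Lambda$ consisting of the almost-$I$-augmented horn inclusions together with the two maps $\{\varepsilon\}\hookrightarrow I$, using the pointwise cylinder $\iota\odot-$. Write $\overline{\Lambda} = {}^\lifts(\Lambda^\lifts)$ for the generated saturated class. Throughout, I will use that $\overline{\Lambda}$ is closed under pushouts, transfinite composition, and retracts, and that by the previous propositions every almost-$I$-augmented horn inclusion (and, via Proposition \ref{prop:generalizedhornpushoutsofhorns}-style reasoning, every \emph{generalized} almost-$I$-augmented horn inclusion) lies in $\overline{\Lambda}$.

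For axiom (An1$'$) I must show that for each $n\geq 0$ and $\varepsilon=0,1$ the map
\[
(I\odot\partial\Delta[n])\cup(\{\varepsilon\}\odot\Delta[n])\hookrightarrow I\odot\Delta[n]
\]
is in $\overline{\Lambda}$. Unwinding the definition of $\iota\odot-$, the target $I\odot\Delta[n]$ is $\Delta[1]\times\Delta[n]$ with a copy of $I$ glued along $\Delta[1]\times\{k\}$ for each vertex $k$. I would build this inclusion as a composite of two stages. First, handle the ``prism'' part: the inclusion $(\Delta[1]\times\partial\Delta[n])\cup(\{\varepsilon\}\times\Delta[n])\hookrightarrow\Delta[1]\times\Delta[n]$ is the classical anodyne prism inclusion, which decomposes as a finite sequence of pushouts of ordinary horn inclusions $\Lambda^k[m]\hookrightarrow\Delta[m]$ by the standard shuffle/simplex-filling argument (Joyal, or Cisinski Example 2.4.13); since these ordinary horns all have a degenerate edge available once we also glue in the $I$'s (or can be matched to $I$-augmented horns with the $\varepsilon$-end as the glued edge), the corresponding pushouts after adjoining the $I$-copies are pushouts of almost-$I$-augmented horn inclusions and of the generating maps $\{\varepsilon\}\hookrightarrow I$. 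Second, after the prism is filled one still needs to glue in the copies of $I$ along the remaining vertical edges $\Delta[1]\times\{k\}$ that were not already in the subcomplex; each such gluing is a pushout of $\{0\}\hookrightarrow I$ (or $\Delta[1]\hookrightarrow I$, which is itself an almost-$I$-augmented horn inclusion of size one together with an endpoint inclusion). Assembling these gives (An1$'$).

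For axiom (An2$'$) I must show that for each generator $A\hookrightarrow B$ in $\Lambda$ the map $(I\odot A)\cup(\partial I\odot B)\to I\odot B$ lies in $\overline{\Lambda}$. There are two families of generators. When the generator is an endpoint inclusion $\{\varepsilon\}\hookrightarrow I$, the resulting map is precisely (up to reindexing) the map $(I\odot I)\cup(\partial I \odot I) \hookleftarrow$-type inclusion, and more to the point the cases of (An2$'$) for the endpoint maps reduce — after using (DH2) and the pushout description of $\iota\odot-$ — to exactly the statement that $\iota$ is \emph{anodyne-ready}, which is our hypothesis; this is the one case that genuinely uses the definition. When the generator $A\hookrightarrow B$ is an almost-$I$-augmented horn inclusion $\Lambda^S[n]^{\mcT^I}_{i\to i+1}\hookrightarrow\Delta[n]^{\ast}_{i\to i+1}$, I would argue as in the prism analysis above: $(I\odot A)\cup(\partial I\odot B)\to I\odot B$ is built from the relative prism inclusion on the pair $(A,B)$ — which decomposes into pushouts of horn inclusions by the relative version of the classical argument — together with the gluings of $I$-copies along the new vertical edges and the already-present augmentation data on the $i\to i+1$ edge; every piece is a pushout of a (generalized) almost-$I$-augmented horn inclusion or of an endpoint inclusion, hence lies in $\overline{\Lambda}$.

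The main obstacle is the bookkeeping in axiom (An2$'$) for the almost-$I$-augmented horn generators: one must check carefully that when the classical prism decomposition produces an \emph{outer} horn $\Lambda^0[m]$ or $\Lambda^m[m]$ (these do occur in the shuffle argument for $\{0\}\times\Delta[n]\subseteq\Delta[1]\times\Delta[n]$), the edge being ``filled from'' is one along which the ambient space already carries a copy of $I$ (coming from $\iota\odot-$), so that the pushout is genuinely of an almost-$I$-augmented horn inclusion rather than a bare outer horn — this is exactly the point of the pointwise cylinder, and it is where the combinatorics of which edges get the $I$-augmentation must be tracked. The endpoint-generator case of (An2$'$) is not an obstacle because it is precisely what ``anodyne-ready'' was defined to deliver; it is invoked as a black box here and discharged for $J$ and $\kappa$ separately afterward.
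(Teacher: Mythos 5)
Your proposal follows essentially the same route as the paper: (An1$'$) is verified by upgrading the classical prism/shuffle decomposition to $I$-augmented horn pushouts using the vertical $I$-edges supplied by the pointwise cylinder (the paper's Lemma \ref{lem:an1satisfied}), (An2$'$) for the almost-$I$-augmented horn generators by the analogous relative decomposition over the augmented horn (Lemma \ref{lem:an2fortherest} and Corollary \ref{cor:an2fortherest}), and (An2$'$) for the endpoint generators by invoking anodyne-readiness verbatim. One small correction: your second stage of (An1$'$) is vacuous for $n\geq 1$, since every vertex of $\Delta[n]$ lies in $\partial\Delta[n]$ and hence all glued copies of $I$ are already present in $I\odot\partial\Delta[n]$ (which is fortunate, because $\Delta[1]\hookrightarrow I$ is not a generator and is not an almost-$I$-augmented horn inclusion), while the $n=0$ case is literally the generator $\{\varepsilon\}\hookrightarrow I$.
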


\begin{prop}\label{prop:JandKanodyneready}
The inclusions $\Delta[1]\hookrightarrow J$ and $\kappa\colon \Delta[1]\hookrightarrow K$ are anodyne-ready.
\end{prop}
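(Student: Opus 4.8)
The plan is to verify the anodyne-ready condition directly for each of $\Delta[1]\hookrightarrow J$ and $\kappa\colon\Delta[1]\hookrightarrow K$, that is, to exhibit the two maps $(\partial I\odot I)\cup(I\odot\{\varepsilon\})\hookrightarrow I\odot I$ (for $\varepsilon=0,1$) as finite sequences of pushouts of almost-$I$-augmented horn inclusions. The first step is to unwind what $I\odot I$ looks like concretely. Since $\iota\odot Y$ is $\Delta[1]\times Y$ with a copy of $I$ glued in along $\Delta[1]\times\{y\}$ for each vertex $y$ of $Y$, the complex $I\odot I$ is $\Delta[1]\times I$ together with a copy of $I$ attached along each vertical edge $\Delta[1]\times\{v\}$, $v\in I_0$. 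For $I=J$ there are two vertices, so we glue in two copies of $J$; for $I=K$ there are two vertices $a,b$, so again two copies of $K$ (along $\Delta[1]\times\{a\}$ and $\Delta[1]\times\{b\}$). The subcomplex $(\partial I\odot I)\cup(I\odot\{\varepsilon\})$ consists of the two ends $\{0\}\odot I$ and $\{1\}\odot I$ together with one ``side'' $I\odot\{\varepsilon\}$ (the copy of $I$ sitting over the vertex $\varepsilon$ of the second factor, thickened over all of the first factor), glued along their common intersections.

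The second step is the core computation: filter $I\odot I$ by the non-degenerate simplices of $\Delta[1]\times I$ not already present, attaching them in order of increasing dimension (a shelling of the prism $\Delta[1]\times I$, exactly as in the classical proof that $\Delta[1]\times-$ preserves anodyne maps). For $I=\Delta[1]$ the prism $\Delta[1]\times\Delta[1]$ is built from the inclusion $(\{0\}\times\Delta[1])\cup(\Delta[1]\times\{\varepsilon\})\hookrightarrow \Delta[1]\times\Delta[1]$ by two pushouts of $2$-horn inclusions $\Lambda^j[2]\hookrightarrow\Delta[2]$; the point is to check that when we instead work inside $I\odot I$ and carry along the glued-in copies of $I$, each such attaching map becomes a pushout of an \emph{almost}-$I$-augmented horn inclusion. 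Concretely, when a face of the horn being filled lies along a vertical edge $\Delta[1]\times\{v\}$, that edge already carries a copy of $I$; and the remaining outer edges of the relevant triangulated polygon that are not $I$-edges get certified as almost-$I$-edges precisely by assembling the copies of $I$ along the appropriate unordered triangulation (this is exactly the mechanism of Propositions \ref{prop:almost2outof3} and \ref{prop:generalizedhornpushoutsofhorns}). So I would track, simplex by simplex, which triangulation $\mcT^{I}$ witnesses the relevant edge, and conclude that the corresponding horn inclusion is an almost-$I$-augmented horn inclusion (possibly a generalized one, which is harmless by Proposition \ref{prop:generalizedhornpushoutsofhorns}). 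For $I=J$, since $J$ is the nerve of the free-living isomorphism, every edge of $J$ is already a $J$-edge in the relevant sense and the bookkeeping is lighter. For $I=K$, one uses the explicit picture of $K$ from Example \ref{ex:K} — the edge $\kappa$ with a left inverse $g$ and right inverse $f$ glued in — to see that the extra edges appearing in $K\odot K$ are $K$-edges or become almost-$K$-edges via the built-in $2$-simplices of $K$.

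The main obstacle I anticipate is the combinatorial organization of the shelling for $I=K$: unlike $\Delta[1]$, the simplicial set $K$ has non-degenerate $2$-simplices (the ones witnessing that $f,g$ are inverses), so $K\odot K$ is not just a prism with copies of $K$ glued on vertical edges — the products of those $2$-simplices with $\Delta[1]$ contribute $3$-simplices, and one must check these get filled by (almost-)$K$-augmented horn inclusions rather than by bare inner/outer horns. I would handle this by first doing the $\Delta[1]\hookrightarrow J$ case in full to set up the pattern (there the glued-in copies of $J$ on vertical edges do all the work and the prism shelling is essentially classical), then treat $\Delta[1]\hookrightarrow K$ by the same shelling, checking the finitely many extra $3$-dimensional (and higher) attachments case by case and identifying for each one the augmented triangulation that makes the relevant non-$I$-edge an almost-$K$-edge. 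Since $K$ is a finite simplicial set, this is a finite check. Both verifications ultimately reduce to the observation in Remark \ref{rmk:exactpullback} about the pointwise cylinder together with the polygon-triangulation bookkeeping already developed in Section \ref{sec:augmentation}.
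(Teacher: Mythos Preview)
Your overall strategy---shell the prism $\Delta[1]\times I$ and upgrade each ordinary horn pushout to an almost-$I$-augmented one---is the paper's strategy, but there is a genuine gap. The subcomplex $(\partial I\odot I)\cup(I\odot\{\varepsilon\})$ contains a copy of $I$ on only \emph{one} vertical edge (the one over $\varepsilon$); your assertion that a vertical edge $\Delta[1]\times\{v\}$ ``already carries a copy of $I$'' is false for $v=\varepsilon'\neq\varepsilon$. Consequently, once your shelling has filled in all of $\Delta[1]\times I$, you are not yet at $I\odot I$: the copy of $I$ glued along $\Delta[1]\times\{\varepsilon'\}$ is still missing, and your proposal gives no mechanism for attaching it. The paper handles this with a separate final step: one first checks (Lemma~\ref{lem:JandKhornpushouts}) that $\Delta[1]\hookrightarrow I$ is itself a sequence of ordinary $k$-horn pushouts for $k\geq 2$, and then upgrades these to almost-$I$-augmented horn pushouts via Corollary~\ref{cor:allalmostpushout}, using that by this stage every edge in sight is an almost-$I$-edge.

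A second issue concerns $I=J$: since $J$ has nondegenerate simplices in every dimension, the prism $\Delta[1]\times J$ is infinite-dimensional and the shelling is not the light bookkeeping you suggest. The paper organizes it via the filtration $J=\bigcup_{\ell\geq 0} B_\ell$ by the nondegenerate $\ell$-simplex starting at $0$ (Lemma~\ref{lem:claimforJ}), showing each successive stage is a finite composite of ordinary horn pushouts. Finally, the robust way to perform the upgrade is not to track which particular edge of each horn sits on a vertical with an attached $I$, but to verify once that every edge of the starting subcomplex is an almost-$I$-edge (each such edge lies in a copy of $I$, and all edges of $J$ or $K$ are almost-$I$-edges by Proposition~\ref{prop:almost2outof3}) and then apply Lemma~\ref{lem:allalmostpushout} inductively; this is what makes the argument go through uniformly, including at horns whose distinguished edge is not vertical.
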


We prove Proposition \ref{prop:anodynereadyworks} in Subsection \ref{sub:anodynereadyworks} and prove Proposition \ref{prop:JandKanodyneready} in Subsection \ref{sub:JandKanodyneready}. We discuss the resulting model structures we get from Cisinski's theory in Subsection \ref{sub:resulting}.

\begin{rmk}
In our arguments below, we view the simplicial set $\Delta[1]\times \Delta[n]$ as the nerve of the poset $[1]\times [n]$
\[
\begin{tikzcd}
{(0,0)} \arrow[d] \arrow[r] & {(0,1)} \arrow[d] \arrow[r] & \ldots \arrow[r] & {(0,n-1)} \arrow[d] \arrow[r] & {(0,n)} \arrow[d] \\
{(1,0)} \arrow[r]           & {(1,1)} \arrow[r]           & \ldots \arrow[r] & {(1, n-1)} \arrow[r]          & {(1,n)} \nospace{,}
\end{tikzcd}
\]
and so we view simplices of $\Delta[1]\times \Delta[n]$ as paths in this poset. We depict the first coordinate vertically.
\end{rmk}

\subsection{Proving Proposition \ref{prop:anodynereadyworks}}\label{sub:anodynereadyworks}

Given $I$ and $\Delta[1]\hookrightarrow I$, let $A(I)$ denote the set of $I$-augmented horn inclusions plus the maps $\{\varepsilon\}\hookrightarrow I$. The inclusion $\Delta[1]\hookrightarrow I$ is anodyne-ready precisely if $A(I)$ partially satisfies axiom (An2$'$). In this subsection, we justify this terminology by showing that $A(I)$ satisfies axiom (An1$'$) and the rest of (An2$'$) for arbitrary $I$ and $\Delta[1]\hookrightarrow I$, making $\Delta[1]\hookrightarrow I$ being anodyne ready precisely the missing piece for $A(I)$ to generate an $(I\odot -)$-anodyne class.

We begin by checking that our set of maps $A(I)$ satisfies axiom (An1$'$).

\begin{lem}\label{lem:an1satisfied}
Given a simplicial set $I$ and $\Delta[1]\hookrightarrow I$, the maps $(\{\varepsilon\}\odot \Delta[n])\cup (I\odot \partial\Delta[n])\hookrightarrow I\odot \Delta[n]$ for $n\geq 1$ and $\varepsilon=0,1$ can be realized as a sequence of pushouts of $I$-augmented horn inclusions.
\end{lem}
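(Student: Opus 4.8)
The plan is to analyze the inclusion $(\{\varepsilon\}\odot \Delta[n])\cup (I\odot \partial\Delta[n])\hookrightarrow I\odot \Delta[n]$ by building $I\odot \Delta[n]$ up one non-degenerate simplex at a time, exactly in the style of Joyal's proof that $\{\varepsilon\}\times\Delta[n]\cup \Delta[1]\times\partial\Delta[n]\hookrightarrow \Delta[1]\times\Delta[n]$ is inner-anodyne (Proposition 2.12 of \cite{Joyal:theory}), and then to account for the extra copies of $I$ glued in along the vertical edges. Recall from Subsection \ref{sub:pointwise} that $I\odot\Delta[n]$ is $\Delta[1]\times\Delta[n]$ with a copy of $I$ glued in along $\Delta[1]\times\{k\}$ for each $0\leq k\leq n$, and that $\Delta[1]\times\Delta[n]$ is the nerve of the poset $[1]\times[n]$, whose non-degenerate $(n+1)$-simplices are the maximal chains, indexed by the ``staircase'' paths from $(0,0)$ to $(1,n)$. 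Write these top-dimensional simplices as $\sigma_0,\ldots,\sigma_n$, where $\sigma_r$ is the path that drops from row $0$ to row $1$ after the $r$-th step (so $\sigma_r$ passes through $(0,r)$ and $(1,r)$). We order them so that $\sigma_n$ is the one closest to $\{0\}\times\Delta[n]$ and $\sigma_0$ is the one closest to $\{1\}\times\Delta[n]$ (or the reverse, depending on whether $\varepsilon=0$ or $\varepsilon=1$; by symmetry it suffices to treat one case, say $\varepsilon=0$).

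First I would run Joyal's filtration: set $Y_{-1}=(\{0\}\times\Delta[n])\cup(\Delta[1]\times\partial\Delta[n])$ and $Y_r = Y_{r-1}\cup\sigma_r$ for $r=0,\ldots,n$, so that $Y_n=\Delta[1]\times\Delta[n]$. The standard computation shows that attaching $\sigma_r$ to $Y_{r-1}$ is a pushout of an inner horn inclusion $\Lambda^{k_r}[n+1]\hookrightarrow\Delta[n+1]$ for an appropriate inner index $k_r$ with $0<k_r<n+1$: the only face of $\sigma_r$ not already present is the one opposite the vertex where $\sigma_r$ either makes its ``elbow.'' Now I need the augmented refinement. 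Rather than first forming $\Delta[1]\times\Delta[n]$ and then gluing in the copies of $I$, I would instead filter $I\odot\Delta[n]$ directly. Begin with $Z_{-1}=(\{0\}\odot\Delta[n])\cup(I\odot\partial\Delta[n])$. Observe that $Z_{-1}$ already contains every copy of $I$ glued along a vertical edge $\Delta[1]\times\{k\}$: for $0\le k\le n$ with... — more carefully, the vertical edge $\Delta[1]\times\{k\}$ lies in $\Delta[1]\times\partial\Delta[n]$ as long as the vertex $k$ lies in $\partial\Delta[n]$, which is automatic since $n\ge 1$ means $\{k\}$ is a proper face of $\Delta[n]$. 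Hence all the glued-in copies of $I$ are already in $I\odot\partial\Delta[n]\subseteq Z_{-1}$, so the inclusion $Z_{-1}\hookrightarrow I\odot\Delta[n]$ differs from $Y_{-1}\hookrightarrow \Delta[1]\times\Delta[n]$ only in that the ambient spaces have the same extra $I$'s attached. Therefore the same filtration $Z_r=Z_{r-1}\cup\sigma_r$ works, and each step $Z_{r-1}\hookrightarrow Z_r$ is still a pushout of the inner horn inclusion $\Lambda^{k_r}[n+1]\hookrightarrow\Delta[n+1]$ along the attaching map; since $0<k_r<n+1$, this inner horn inclusion is an ordinary inner horn inclusion, and an ordinary inner horn inclusion is in particular an $I$-augmented horn inclusion? — no: it is not augmented at all, but an inner horn inclusion is a pushout of an $I$-augmented one? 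I should instead note that we must arrange the filtration so that the relevant elbow vertex always sits at a degenerate vertical edge; this is where the copies of $I$ matter, and here is the key subtlety.

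The key step — and the main obstacle — is that the horn inclusions appearing in the filtration must be matched with \emph{$I$-augmented} horn inclusions, not merely inner ones. The point is that when we attach $\sigma_r$, the edge of $\sigma_r$ that runs vertically, namely $(0,r)\to(1,r)$, is exactly an edge carrying a glued-in copy of $I$; so the face-poset of $\sigma_r$ realizes $\sigma_r$ with a copy of $I$ attached along that vertical edge, i.e. the pushout we are performing is genuinely of the shape $\Lambda^{k_r}[n+1]^{I}_{k_r-1\to k_r}\hookrightarrow\Delta[n+1]^{I}_{k_r-1\to k_r}$ (for the appropriate $k_r$ making $k_r-1\to k_r$ the vertical edge), which is precisely an $I$-augmented horn inclusion in the sense of Definition \ref{def:pinchedandaugmentedhorns} — or, in the degenerate-elbow cases, a \emph{pinched} one, hence still covered by Proposition \ref{prop:generalizedhornpushoutsofhorns} and the definition of $A(I)$. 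Concretely, I would: (i) fix the indexing of the $\sigma_r$ and for each $r$ identify the unique new face and hence the horn index $k_r$; (ii) check that the vertical edge $(0,r)\to(1,r)$ is always the $(k_r-1)\to k_r$ edge (or $k_r\to(k_r+1)$ edge, depending on orientation) of $\sigma_r$, so the copy of $I$ glued there makes the attachment a pushout of an $I$-augmented horn inclusion; (iii) verify the base step $r$ near the boundary, where the would-be horn index could be $0$ or $n+1$ — here one uses that the corresponding vertical edge is degenerate, so the inclusion is a pinched horn inclusion $\Lambda^{S}[n+1]^{\ast}\hookrightarrow\Delta[n+1]^{\ast}$, which is a sequence of pushouts of pinched horn inclusions hence of $I$-augmented ones by Proposition \ref{prop:generalizedhornpushoutsofhorns} and Corollary \ref{cor:characterizedhtpicalMS}-style bookkeeping; and (iv) also handle the copies of $I$ that were \emph{not} already in $Z_{-1}$ — but by the observation above there are none, so this case is vacuous. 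Composing the finitely many pushouts $Z_{-1}\hookrightarrow Z_0\hookrightarrow\cdots\hookrightarrow Z_n = I\odot\Delta[n]$ gives the result. The bookkeeping in step (ii)–(iii) — keeping straight which edge of each staircase simplex is the glued-in vertical one and confirming it lands in the correct slot relative to the horn index — is the delicate part; everything else is a routine transcription of Joyal's argument.
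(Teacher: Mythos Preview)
Your overall strategy matches the paper's: since $n\geq 1$, every vertex of $\Delta[n]$ lies in $\partial\Delta[n]$, so all the glued-in copies of $I$ already belong to $I\odot\partial\Delta[n]$; one then filters $\Delta[1]\times\Delta[n]$ by its staircase $(n+1)$-simplices and checks that each attachment is an $I$-augmented horn pushout because the vertical edge $(0,r)\to(1,r)$ carries a copy of $I$ and is adjacent to the missing face. Your step~(ii) is exactly the paper's argument. However, two pieces of bookkeeping are wrong.

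First, the gluing order is backwards. For $\varepsilon=0$, the simplex $\sigma_0$ has \emph{two} faces missing from $Y_{-1}$, namely $d_0\sigma_0=\{1\}\times\Delta[n]$ and $d_1\sigma_0=(0,0)\to(1,1)\to\cdots\to(1,n)$, so it cannot be attached first via a single horn. One must begin with $\sigma_n$ and work down to $\sigma_0$; your stated order is the correct one for $\varepsilon=1$, not $\varepsilon=0$. In the correct order, when $\sigma_r$ is attached its unique missing face is $d_r$, the horn index is $r$, and the $I$-edge sits at position $r\to r+1$, giving the $I$-augmented horn inclusion $\Lambda^r[n+1]^I_{r\to r+1}\hookrightarrow\Delta[n+1]^I_{r\to r+1}$.

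Second, step~(iii) is misdiagnosed. The last horn $\Lambda^0[n+1]$ is indeed outer (so your earlier claim that all the $k_r$ are inner is false), but no pinched horn appears and no vertical edge is degenerate: the $0\to 1$ edge of $\sigma_0$ is the vertical edge $(0,0)\to(1,0)$, which carries a genuine copy of $I$, so $\Lambda^0[n+1]^I_{0\to 1}\hookrightarrow\Delta[n+1]^I_{0\to 1}$ is already an $I$-augmented horn inclusion by Definition~\ref{def:pinchedandaugmentedhorns} (case $j=i=0$). The appeals to Proposition~\ref{prop:generalizedhornpushoutsofhorns} and Corollary~\ref{cor:characterizedhtpicalMS} are unnecessary.
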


\begin{proof}
We prove the case $\varepsilon=0$; the argument for $\varepsilon=1$ is similar. We begin by identifying which simplices of $I\odot \Delta[n]$ are neither in $\{0\}\odot \Delta[n]$ nor in $I\odot \partial\Delta[n]$. Because $n\geq 1$, the extra copies of $I$ glued in along $\Delta[1]\times\sk_0(\Delta[n])$ are already present in $I\odot\partial\Delta[n]$, and so all of the simplices not in the domain of our inclusion must be contained in $\Delta[1]\times\Delta[n]\cin I\odot\Delta[n]$.

For $0\leq j\leq n$, let $P_j$ be the $(n+1)$-simplex corresponding to the path
\[
\begin{tikzcd}[column sep=small]
{(0,0)} \arrow[r] & \ldots \arrow[r] & {(0,j-1)} \arrow[r] & {(0,j)} \arrow[d] &                     &                  &         \\
                  &                  &                     & {(1,j)} \arrow[r] & {(1,j+1)} \arrow[r] & \ldots \arrow[r] & {(1,n)}\nospace{,}
\end{tikzcd}
\]
and for $0\leq j\leq n-1$, let $Q^{j}_{j+1}$ be the $n$-simplex corresponding to the path
\[
\begin{tikzcd}[column sep=small]
{(0,0)} \arrow[r] & \ldots \arrow[r] & {(0,j-1)} \arrow[r] & {(0,j)} \arrow[rd] &                     &                  &         \\
                  &                  &                     &                    & {(1,j+1)} \arrow[r] & \ldots \arrow[r] & {(1,n)}\nospace{.}
\end{tikzcd}
\]
Let $Q_0$ denote the $n$-simplex $(1,0)\to \ldots \to (1,n)$. These simplices are precisely the simplices of $\Delta[1]\times\Delta[n]$ that are not contained in $\{0\}\times \Delta[n]$ or in $\Delta[1]\times\partial\Delta[n]$, since any other simplex either avoids both vertices $(0,j)$ and $(1,j)$ for some $0\leq j\leq n$ and so is in $\Delta[1]\times\partial\Delta[n]$, or is the $n$-simplex $(0,0)\to\ldots\to(0,n)$ that is contained in $\{0\}\times \Delta[n]$. The diagram in Figure \ref{fig:PQposet} shows how these simplices fit together, with an arrow indicating that one simplex is a face of another. It remains to describe the process by which we glue in each of these simplices via an $I$-augmented horn pushout. The red arrows indicate which pairs of simplices are attached at the same step of this process, and conversely a black arrow indicates that the simplices are glued in at different steps.
\begin{figure}[h]
\caption{}
\label{fig:PQposet}
\vspace{3mm}
\begin{tikzcd}[row sep=0em]
        & Q_0 \arrow[ld, red]                       \\
P_0     &                                              \\
        & Q^0_1 \arrow[ld, red] \arrow[lu] \\
P_1     &                                              \\
\vdots  & \vdots \arrow[ld, red] \arrow[lu]         \\
P_{n-1} &                                              \\
        & \ Q^{n-1}_n \arrow[ld, red, shorten <=-2mm] \arrow[lu, shorten <=-2mm]      \\
P_n     &                                                           
\end{tikzcd}
\end{figure}

Let us spell out this process explicitly. To attach these simplices via $I$-augmented horn pushouts, we begin with $P_n$, whose only missing face is its $d_n$ face, the $n$-simplex $Q^{n-1}_n$. This horn pushout can be realized as an $I$-augmented horn pushout because the $n\to n+1$ edge of $P_n$ is the vertical edge $(0,n)\to (1,n)$, which is an $I$-edge. We continue inductively, gluing in each $P_j$ together with its $d_j$ face for $j=n-1,n-2,\ldots,2,1,0$. The red arrows in the diagram highlight the inclusion of the $d_j$ face into each (n+1)-simplex $P_j$.
\end{proof}

We record a consequence of this lemma for later use.

\begin{cor}\label{cor:bijon0simpIaug}
Given a simplicial set $I$ and $\Delta[1]\hookrightarrow I$ and any bijective-on-0-simplices inclusion $A\hookrightarrow B$, the maps $(\{\varepsilon\}\odot B)\cup (I\odot A)\hookrightarrow I\odot B$ for $\varepsilon=0,1$ can be realized as a sequence of pushouts of $I$-augmented horn inclusions.
\end{cor}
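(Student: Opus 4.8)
The plan is to reduce the statement about an arbitrary bijective-on-$0$-simplices inclusion $A \hookrightarrow B$ to the case $\partial\Delta[n] \hookrightarrow \Delta[n]$ already handled in Lemma \ref{lem:an1satisfied}, using the standard skeletal filtration of a monomorphism. First I would recall that any monomorphism $A \hookrightarrow B$ of simplicial sets can be written as a (possibly transfinite) composite of pushouts
\[
B^{(m-1)} \longhookrightarrow B^{(m)},
\]
where $B^{(m)}$ is obtained from $B^{(m-1)}$ by attaching the non-degenerate $m$-simplices of $B$ not in $A$, i.e.\ each such step is a pushout of a coproduct of boundary inclusions $\partial\Delta[m] \hookrightarrow \Delta[m]$; here $B^{(-1)} = A$. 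Crucially, since $A \hookrightarrow B$ is bijective on $0$-simplices, no $0$-simplices are attached, so every boundary inclusion appearing has $m \geq 1$, which is exactly the range covered by Lemma \ref{lem:an1satisfied}.

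The second ingredient is that the functor $I \odot -$ (and likewise $\{\varepsilon\} \odot -$) commutes with small colimits, as noted in the proof that $\iota\odot-$ is an exact cylinder (it is a pushout of the colimit-preserving functors $I\times\sk_0(-)$, $\Delta[1]\times\sk_0(-)$, $\Delta[1]\times-$). Therefore, applying the pushout-product construction to the filtration $A = B^{(-1)} \hookrightarrow B^{(0)} \hookrightarrow B^{(1)} \hookrightarrow \cdots$ and taking the relative cylinder $(\{\varepsilon\}\odot -)\cup (I\odot A')\hookrightarrow I\odot -$ at each stage, one gets that
\[
(\{\varepsilon\}\odot B)\cup (I\odot A) \longhookrightarrow I\odot B
\]
is a transfinite composite of the maps
\[
(\{\varepsilon\}\odot B^{(m)})\cup (I\odot B^{(m-1)}) \longhookrightarrow (\{\varepsilon\}\odot B^{(m)})\cup (I\odot B^{(m)}),
\]
and each of these is in turn a pushout of the map $(\{\varepsilon\}\odot \Delta[m])\cup (I\odot \partial\Delta[m]) \hookrightarrow I\odot \Delta[m]$ (or a coproduct of such, one for each attached non-degenerate $m$-simplex), since the pushout-product $-\cup-$ is itself compatible with pushouts in the variable being filtered. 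By Lemma \ref{lem:an1satisfied} each such map is a sequence of pushouts of $I$-augmented horn inclusions, and a sequence of pushouts of sequences of pushouts is again a sequence of pushouts, so the composite is too.

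The main obstacle — really the only place requiring care — is the bookkeeping in the pushout-product step: verifying that attaching a non-degenerate $m$-simplex $\sigma$ of $B$ along its boundary corresponds, after applying the cylinder, exactly to the pushout of $(\{\varepsilon\}\odot \Delta[m])\cup (I\odot \partial\Delta[m]) \hookrightarrow I\odot \Delta[m]$ along the map classifying $\sigma$, with no overlap between the images of distinct attached simplices. This is the usual "cellular pushout-product" argument and goes through because $I\odot-$ preserves monomorphisms and pullbacks of the relevant form (axiom (DH2) / Remark \ref{rmk:exactpullback}), so the intersection of $I\odot\Delta[m]$ with $(\{\varepsilon\}\odot B^{(m)})\cup(I\odot B^{(m-1)})$ inside $I\odot B^{(m)}$ is precisely $(\{\varepsilon\}\odot\Delta[m])\cup(I\odot\partial\Delta[m])$. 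Since the $m\geq 1$ constraint from the bijectivity hypothesis is what licenses the use of Lemma \ref{lem:an1satisfied}, the argument is complete once this identification is in hand.
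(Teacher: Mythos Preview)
Your proposal is correct and follows essentially the same approach as the paper's proof: reduce to boundary inclusions $\partial\Delta[n]\hookrightarrow\Delta[n]$ with $n\geq 1$ via the skeletal filtration (using the bijective-on-$0$-simplices hypothesis to exclude $n=0$), then invoke Lemma~\ref{lem:an1satisfied}. The paper's proof is a two-sentence sketch of exactly this argument; your version spells out the pushout-product bookkeeping more carefully, but the underlying idea is identical.
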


\begin{proof}
Since $A\hookrightarrow B$ is bijective on 0-simplices, it can be witnessed as a sequence of pushouts of boundary inclusions $\partial\Delta[n]\hookrightarrow \Delta[n]$ for $n\geq 1$, and so $(\{\varepsilon\}\odot B)\cup (I\odot A)\hookrightarrow I\odot B$ can be witnessed as a sequence of pushouts of the maps $(\{\varepsilon\}\odot \Delta[n])\cup (I\odot \partial\Delta[n])\hookrightarrow I\odot \Delta[n]$ for $n\geq 1$.
\end{proof}

Having shown that $A(I)$ satisfies (An1$'$), we turn to proving that part of (An2$'$) is satisfied, which follows from the following more general lemma (by setting $I'=\mcT^I$).

\begin{lem}\label{lem:an2fortherest}
Fix $\Delta[1]\hookrightarrow I'$. For all $n\geq 2$ and $0\leq i\leq n$, if $A\hookrightarrow B$ is a pushout along
\[
\left((\{0\}\sqcup\{1\})\times\Delta[n]\right)\cup \left(\Delta[1]\times \Lambda^i[n]\right) \hookrightarrow \Delta[1]\times\Delta[n]
\]
such that either the $(\varepsilon,i-1)\to (\varepsilon,i)$ edges or the $(\varepsilon,i)\to (\varepsilon,i+1)$ edges (for $\varepsilon=0,1$) are sent to $I'$-edges in $A$, then $A\hookrightarrow B$ is a finite composite of pushouts of $I'$-augmented horn inclusions.
\end{lem}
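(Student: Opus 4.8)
The plan is to reprove the classical ``prism decomposition'' --- writing the prism-horn inclusion
\[
\bigl((\{0\}\sqcup\{1\})\times\Delta[n]\bigr)\cup\bigl(\Delta[1]\times\Lambda^i[n]\bigr)\hookrightarrow\Delta[1]\times\Delta[n]
\]
as a finite composite of pushouts of \emph{ordinary} horn inclusions --- while keeping track, at each stage, of which edge of the filled horn plays the role of its ``$(j-1)\to j$'' or ``$j\to j+1$'' edge. The reason this suffices is the elementary observation that if a map $\Lambda^j[m]\to Y$ sends the $k\to(k+1)$ edge (for some $k\in\{j-1,j\}$, so that this edge lies in $\Lambda^j[m]$) to an $I'$-edge of $Y$, then, fixing a witnessing map $I'\to Y$, comparison of universal properties gives $Y\cup_{\Lambda^j[m]}\Delta[m]\cong Y\cup_{\Lambda^j[m]^{I'}_{k\to k+1}}\Delta[m]^{I'}_{k\to k+1}$, i.e.\ a pushout of an $I'$-augmented horn inclusion (the copy of $I'$ newly glued onto $\Delta[m]$ is forced to agree with the copy already present along the $k\to(k+1)$ edge of $Y$). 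Since $A\hookrightarrow B$ is the pushout of the prism-horn inclusion along a map sending the vertical-slice edges $(\varepsilon,i-1)\to(\varepsilon,i)$ (resp.\ $(\varepsilon,i)\to(\varepsilon,i+1)$) to $I'$-edges, applying this observation stage-by-stage converts the prism decomposition into the desired decomposition of $A\hookrightarrow B$. Using the order-reversing self-isomorphism $\Delta[n]\xrightarrow{\sim}\Delta[n]$, which carries $\Lambda^i[n]$ to $\Lambda^{n-i}[n]$, carries $I'$-augmented horn inclusions to $I'$-augmented horn inclusions, and interchanges the two alternatives in the hypothesis, I may and do assume throughout that it is the $(\varepsilon,i)\to(\varepsilon,i+1)$ edges that become $I'$-edges, so that $0\le i\le n-1$.

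\textbf{The combinatorial core.} View $\Delta[1]\times\Delta[n]$ as the nerve of $[1]\times[n]$, so its nondegenerate simplices are the chains in this poset, and write $X_0$ for the domain of the prism-horn inclusion. A chain fails to lie in $X_0$ exactly when it meets both rows and its projection to $[n]$ has image equal to $\{0,\dots,n\}$ or to $\{0,\dots,n\}\smallsetminus\{i\}$. Classifying such chains by where they ``cut'' between the two rows, one finds they form four families: the $(n+1)$-simplices $\sigma_s=\bigl((0,0),\dots,(0,s),(1,s),\dots,(1,n)\bigr)$ for $0\le s\le n$ and the $n$-simplices $\tau_s=\bigl((0,0),\dots,(0,s),(1,s+1),\dots,(1,n)\bigr)$ for $0\le s\le n-1$ (these are the ``$P$'' and ``$Q$'' simplices akin to those in the proof of Lemma~\ref{lem:an1satisfied}, having full $[n]$-image), together with their ``$i$-deleted'' analogues $\sigma'_s$ of dimension $n$ (for $s\ne i$) and $\tau'_s$ of dimension $n-1$ (for $s\ne i$, $s\ne n$), whose $[n]$-image omits $i$. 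I pair these as $\tau_s\leftrightarrow\tau'_s$ for $s\ne i$, as $\sigma_i\leftrightarrow\tau_i$, and as $\sigma_s\leftrightarrow\sigma'_s$ for $s\ne i$. In each pair the higher-dimensional simplex $\rho$ is the filler of an ordinary horn inclusion $\Lambda^{j}[m]\hookrightarrow\Delta[m]$ whose missing face $d_j\rho$ is the other member (both being nondegenerate, as faces of chains are chains), and a direct inspection of the vertices of $\rho$ shows in every case that the $j\to(j+1)$ edge or the $(j-1)\to j$ edge of $\rho$ is precisely one of the edges $(0,i)\to(0,i+1)$ or $(1,i)\to(1,i+1)$. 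Ordering the pairs so that all $\tau_s\leftrightarrow\tau'_s$ come first, then $\sigma_i\leftrightarrow\tau_i$, then the $\sigma_s\leftrightarrow\sigma'_s$, one checks that at each step every face of the filler except the designated one is already present (and the designated missing face is genuinely absent, since it is not a face of any earlier filler), so each step is a bona fide pushout of a horn inclusion. This is the required prism decomposition, with the promised control on edges.

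\textbf{Conclusion and main obstacle.} Pushing the resulting filtration $X_0=Y_0\subset Y_1\subset\cdots\subset Y_N=\Delta[1]\times\Delta[n]$ out along the structure map $X_0\to A$ gives a filtration $A=B_0\subset\cdots\subset B_N=B$ in which each $B_r\hookrightarrow B_{r+1}$ is a pushout of the horn inclusion $\Lambda^{j_r}[m_r]\hookrightarrow\Delta[m_r]$; since the edges $(\varepsilon,i)\to(\varepsilon,i+1)$ are $I'$-edges already in $A$, hence in every $B_r$, the observation of the first paragraph upgrades each such step to a pushout of an $I'$-augmented horn inclusion, so $A\hookrightarrow B$ is a finite composite of these. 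I expect the main obstacle to be the bookkeeping in the middle paragraph: confirming that the proposed pairing is well defined (no simplex appearing in two pairs, each designated missing face genuinely absent when its pair is attached), that the chosen ordering makes every step an honest horn pushout rather than a pushout of some $\partial\Delta[m]\hookrightarrow\Delta[m]$, and --- the crux --- that the relevant near-vertex edge of each filler really is one of the two designated edges, which requires carefully tracking vertex positions through the cases $i<s$, $i>s$, $i=s$, and the boundary situations $i=0$, $i=n-1$, and $n=2$.
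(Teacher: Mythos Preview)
Your proposal is correct and follows essentially the same approach as the paper: your simplices $\sigma_s,\tau_s,\sigma'_s,\tau'_s$ are precisely the paper's $P_s,Q^s_{s+1},R(i)_s,S(i)^*$, your pairing matches the paper's blue/green/red arrows in Figure~\ref{fig:PQRSforaughorns}, and your verification that the relevant $j\to(j+1)$ or $(j-1)\to j$ edge is one of $(\varepsilon,i)\to(\varepsilon,i+1)$ is the same case analysis the paper carries out. The only difference is cosmetic: you glue $\sigma_i\leftrightarrow\tau_i$ before the $\sigma_s\leftrightarrow\sigma'_s$ pairs rather than after, which is in fact necessary so that $\tau_i=d_{i+1}\sigma_{i+1}$ is present when $\sigma_{i+1}$ is attached.
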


\begin{proof}
We begin by identifying which simplices of $\Delta[1]\times\Delta[n]$ are not in $\Delta[1]\times\Lambda^i[n]$ or in $(\{0\}\sqcup\{1\})\times\Delta[n]$.

A simplex is in $\Delta[1]\times\Lambda^i[n]$ if there is some $0\leq j \leq n$ with $j\neq i$ such that the simplex avoids both of the vertices $(0,j)$ and $(1,j)$. A simplex is in $(\{0\}\sqcup\{1\})\times\Delta[n]$ if its vertices are all $0$ or all $1$ in the first coordinate.

Let us fix notation for each of the simplices that avoid satisfying both of these criteria. First, we let $P_j$ and $Q^{j}_{j+1}$ be defined as in the proof of Lemma \ref{lem:an1satisfied}. For $0\leq i,j\leq n$ such that $j\neq i$, let $R(i)_j$ be the $n$-face of $P_j$ that skips the $(\varepsilon,i)$ vertex (where $\varepsilon=0$ if $j>i$ and $\varepsilon=1$ if $j<i$). For $0\leq i\leq n$ and $0\leq j\leq n-1$ such that $j\neq i-1, i$, let $S(i)^{j}_{j+1}$ be the $(n-1)$-face of $Q^{j}_{j+1}$ that skips the $(\varepsilon,i)$ vertex (where $\varepsilon=0$ if $j>i$ and $\varepsilon=1$ if $j<i-1$). Let $S(i)^{i-1}_{i+1}$ be the $(n-1)$-simplex corresponding to the path
\[
\begin{tikzcd}[column sep=small]
{(0,0)} \arrow[r] & \ldots \arrow[r] & {(0,i-1)} \arrow[rrd] & \quad &                     &                  &         \\
                  &                  &                       &  & {(1,i+1)} \arrow[r] & \ldots \arrow[r] & {(1,n)}\nospace{.}
\end{tikzcd}
\]

Figure \ref{fig:PQRSforaughorns} shows how these simplices fit together, with an arrow indicating that one simplex is a face of another. As in Figure \ref{fig:PQposet} from the proof of Lemma \ref{lem:an1satisfied}, the different-colored arrows indicate which pairs of simplices are attached at the same step of the process below, while black arrows indicate that the simplices are glued in at different steps.

\begin{figure}[h]
\caption{}
\label{fig:PQRSforaughorns}
\vspace{3mm}
\begin{tikzcd}[row sep=small]
P_0     &                                             & R(i)_0 \arrow[ll, red]     &                                                                          \\
        & Q^0_1 \arrow[lu] \arrow[ld]                 &                               & S(i)^0_1 \arrow[lu] \arrow[ld] \arrow[ll, blue]                        \\
P_1     &                                             & R(i)_1 \arrow[ll, red]     &                                                                          \\
\vdots  & \vdots \arrow[lu] \arrow[ld]                & \vdots                        & \vdots \arrow[lu] \arrow[ld]                                             \\
P_{i-2} &                                             & R(i)_{i-2} \arrow[ll, red] &                                                                          \\
        & Q^{i-2}_{i-1} \arrow[lu] \arrow[ld]         &                               & S(i)^{i-2}_{i-1} \arrow[lu] \arrow[ld] \arrow[ll, blue]                \\
P_{i-1} &                                             & R(i)_{i-1} \arrow[ll, red] &                                                                          \\
        & Q^{i-1}_i \arrow[lu] \arrow[ld]             &                               &                                                                          \\
P_i     &                                             &                               & S(i)^{i-1}_{i+1} \arrow[luu] \arrow[ldd] \arrow[llu, blue] \arrow[lld] \\
        & Q^i_{i+1} \arrow[lu, green] \arrow[ld] &                               &                                                                          \\
P_{i+1} &                                             & R(i)_{i+1} \arrow[ll, red] &                                                                          \\
        & Q^{i+1}_{i+2} \arrow[lu] \arrow[ld]         &                               & S(i)^{i+1}_{i+2} \arrow[lu] \arrow[ld] \arrow[ll, blue]                \\
P_{i+2} &                                             & R(i)_{i+2} \arrow[ll, red] &                                                                          \\
\vdots  & \vdots \arrow[lu] \arrow[ld]                & \vdots                        & \vdots \arrow[lu] \arrow[ld]                                             \\
P_{n-1} &                                             & R(i)_{n-1} \arrow[ll, red] &                                                                          \\
        & Q^{n-1}_n \arrow[lu] \arrow[ld]             &                               & S(i)^{n-1}_n \arrow[lu] \arrow[ld] \arrow[ll, blue]                    \\
P_n     &                                             & R(i)_n \arrow[ll, red]     &                                                                         
\end{tikzcd}
\end{figure}

To describe $A\hookrightarrow B$ as a sequence of pushouts of $I'$-augmented horn inclusions, we consider the case where the $(\varepsilon,i)\to (\varepsilon,i+1)$ edges are sent to $I'$-edges in $A$. (The other case is similar.) For $0\leq j\leq i-1$ and $i+1\leq j\leq n-1$, the only face of $Q^{j}_{j+1}$ that is not already in $A$ is its $d_i$ face $S(i)^{j}_{j+1}$. When $0\leq j\leq i-1$, the $(1,i)\to (1,i+1)$ edge is in $Q^{j}_{j+1}$, and when $i+1\leq j\leq n-1$, the $(0,i)\to (0,i+1)$ edge is in $Q^{j}_{j+1}$, in both cases corresponding to the $i\to (i+1)$ edge of the $n$-simplex $Q^{j}_{j+1}$. We also have $S(i)^{i-1}_{i+1}$ as the $d_i$ face of $Q^{i-1}_i$, where the edge $(1,i)\to (1,i+1)$ is the $i\to (i+1)$ edge of $Q^{i-1}_i$. We can therefore glue in every $Q$ simplex (along with its $d_i$ face) except for $Q^i_{i+1}$ as the first steps in our sequence of $I'$-augmented horn pushouts. The pairs of simplices glued in at this step are indicated by the blue arrows in the diagram. Then for $0\leq j\leq i-1$ the $d_{i+1}$ face of $P_j$ is $R(i)_j$ and the $i+1\to i+2$ edge is $(1,i)\to (1,i+1)$, and for $i+1\leq j\leq n-1$ the $d_i$ face of $P_j$ is $R(i)_j$ and the $i\to (i+1)$ edge is $(0,i)\to (0,i+1)$, so we can now glue in every $P$ simplex (along with its missing $d_i$ or $d_{i+1}$ face) except for $P_i$ as the next steps in our sequence of pushouts. These steps are indicated by the red arrows. All that is left is $P_i$ and its $d_{i+1}$ face $Q^{i}_{i+1}$, and since the $i+1\to i+2$ edge of $P_i$ is $(1,i)\to (1,i+1)$, these remaining simplices are glued in via an $I'$-augmented horn pushout as well, indicated by the green arrow.
\end{proof}

By applying Lemma \ref{lem:an2fortherest} when $I'=\mcT^{I}$, we get the following corollary which says that $A(I)$ satisfies part of (An2$'$).

\begin{cor}\label{cor:an2fortherest}
Given a simplicial set $I$ and $\Delta[1]\hookrightarrow I$, for every almost-$I$-augmented horn inclusion
\[
\Lambda^j[n]^{\mcT^{I}}_{i\to i+1}\hookrightarrow \Delta[n]^{\mcT^I}_{i\to i+1}
\]
(where $j=i$ or $i+1$), the map
\[
(\partial J\odot \Delta[n]^{\mcT^{I}}_{i\to i+1})\cup (I\odot \Lambda^j[n]^{\mcT^{I}}_{i\to i+1})\hookrightarrow I\odot \Delta[n]^{\mcT^{I}}_{i\to i+1}
\]
is a finite composite of pushouts of almost-$I$-augmented horn inclusions.
\end{cor}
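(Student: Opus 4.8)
The plan is to realize the map in the statement as a single pushout of the inclusion appearing in Lemma~\ref{lem:an2fortherest}, applied with $I'=\mcT^I$, and then to invoke that lemma. Write $A=\Lambda^j[n]^{\mcT^I}_{i\to i+1}$ and $B=\Delta[n]^{\mcT^I}_{i\to i+1}$, so that the map in question is $(\partial I\odot B)\cup(I\odot A)\hookrightarrow I\odot B$. The first observation is that since $n\geq 2$, the inclusion $A\hookrightarrow B$ merely adjoins the two non-degenerate simplices $\Delta[n]$ and $d_j\Delta[n]$; in particular $A$ and $B$ have the same $0$-simplices. Consequently every copy of $I$ that the pointwise cylinder glues into $I\odot B$ along a vertical edge $\Delta[1]\times\{b\}$ already appears in $I\odot A$, so every simplex of $I\odot B$ that does not lie in $\Delta[1]\times B$ already lies in $I\odot A$, hence in the domain $(\partial I\odot B)\cup(I\odot A)$.

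It then remains to account for the simplices of $\Delta[1]\times B$. Since $\Delta[1]\times-$ preserves the pushouts defining $B$ and $A$, we have $\Delta[1]\times B=(\Delta[1]\times\Delta[n])\cup(\Delta[1]\times\mcT^I)$ and $\Delta[1]\times A=(\Delta[1]\times\Lambda^j[n])\cup(\Delta[1]\times\mcT^I)$, where the unions are along $\Delta[1]$ times the $i\to(i+1)$ edge. Combining this with the previous paragraph, a direct computation of intersections and unions of subcomplexes of $I\odot B$ shows that
\[
\bigl[(\partial I\odot B)\cup(I\odot A)\bigr]\cap(\Delta[1]\times\Delta[n])=\bigl((\{0\}\sqcup\{1\})\times\Delta[n]\bigr)\cup(\Delta[1]\times\Lambda^j[n])
\]
and that $\bigl[(\partial I\odot B)\cup(I\odot A)\bigr]\cup(\Delta[1]\times\Delta[n])=I\odot B$. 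It follows that $(\partial I\odot B)\cup(I\odot A)\hookrightarrow I\odot B$ is a pushout of the inclusion $\bigl((\{0\}\sqcup\{1\})\times\Delta[n]\bigr)\cup(\Delta[1]\times\Lambda^j[n])\hookrightarrow\Delta[1]\times\Delta[n]$.

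To finish, I would apply Lemma~\ref{lem:an2fortherest} with $I'=\mcT^I$ and with that lemma's index taken to be $j$. Its hypothesis asks that, in $(\partial I\odot B)\cup(I\odot A)$, either the edges $(\varepsilon,j-1)\to(\varepsilon,j)$ or the edges $(\varepsilon,j)\to(\varepsilon,j+1)$ (for $\varepsilon=0,1$) be $\mcT^I$-edges; in both of the cases $j=i$ and $j=i+1$ this amounts to the edges $(\varepsilon,i)\to(\varepsilon,i+1)$ being $\mcT^I$-edges. This holds because one leg of the pushout defining $A=\Lambda^j[n]^{\mcT^I}_{i\to i+1}$ is a map $\mcT^I\to A$ carrying the distinguished edge $\Delta[1]\hookrightarrow\mcT^I$ to the $i\to(i+1)$ edge; postcomposing with the inclusions $A\cong\{\varepsilon\}\times A\hookrightarrow\Delta[1]\times A\hookrightarrow I\odot A$ exhibits $(\varepsilon,i)\to(\varepsilon,i+1)$ as a $\mcT^I$-edge. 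Lemma~\ref{lem:an2fortherest} then tells us that the lower inclusion of the pushout square is a finite composite of pushouts of $\mcT^I$-augmented horn inclusions, so the upper one is as well, and a $\mcT^I$-augmented horn inclusion is by definition an almost-$I$-augmented horn inclusion. I expect the main obstacle to be the bookkeeping in the second paragraph: one must check that the glued-in copies of $\mcT^I$ and of $I$ add nothing to $I\odot B$ beyond what the ``core'' $\Delta[1]\times\Delta[n]$ and the domain already contain, so that the square in question really is a pushout of the asserted map. The only other thing to keep straight is the translation between the pair of indices $(i,j)$ here and the single index of Lemma~\ref{lem:an2fortherest}.
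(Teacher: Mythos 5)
Your proposal is correct and takes essentially the same route as the paper: the paper's proof is the single sentence that the map satisfies the hypotheses of Lemma~\ref{lem:an2fortherest} with $I'=\mcT^I$, and your argument simply fills in the implicit details (that the map is a pushout of the lemma's inclusion because $A\hookrightarrow B$ is bijective on $0$-simplices and the glued copies of $I$ and $\mcT^I$ already lie in the domain, and that the $(\varepsilon,i)\to(\varepsilon,i+1)$ edges are $\mcT^I$-edges so the lemma's hypothesis holds for either $j=i$ or $j=i+1$). Your reading of the paper's ``$\partial J$'' as the intended ``$\partial I$'' is also the right one.
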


\begin{proof}
This map satisfies the hypotheses of Lemma \ref{lem:an2fortherest} when $I'=\mcT^{I}$.
\end{proof}

We have now assembled the necessary ingredients to prove Proposition \ref{prop:anodynereadyworks}, which says that $\Delta[1]\hookrightarrow I$ being anodyne-ready is indeed the missing piece for the set of maps $A(I)$ to generate an anodyne class.

\begin{proof}[Proof of Proposition \ref{prop:anodynereadyworks}]
To check that axiom (An1$'$) is satisfied, we note that the map
\[
(\{\varepsilon\}\odot \Delta[0])\cup (I\odot \partial\Delta[0])\hookrightarrow I\odot \Delta[0]
\]
for $\varepsilon=0,1$ is isomorphic to $\{\varepsilon\}\hookrightarrow I$, which takes care of the $n=0$ case. For $n\geq 1$ we apply Lemma \ref{lem:an1satisfied}.

To check that axiom (An2$'$) is satisfied, we use Corollary \ref{cor:an2fortherest} to account for all of the almost-$I$-augmented horn inclusions. The remaining maps $\{\varepsilon\}\hookrightarrow I$ are accounted for because we assumed $\Delta[1]\hookrightarrow I$ to be anodyne-ready.
\end{proof}

\subsection{Proving Proposition \ref{prop:JandKanodyneready}}\label{sub:JandKanodyneready}

We now turn to showing that $\Delta[1]\hookrightarrow J$ and $\Delta[1]\hookrightarrow K$ are anodyne-ready. An important ingredient of the proof will be the following observation about when we can upgrade ordinary horn inclusions to augmented horn inclusions.

\begin{lem}\label{lem:allalmostpushout}
Given a simplicial set $I$ and $\Delta[1]\hookrightarrow I$ and $k\geq 2$, if $\Lambda^j[k]\to X$ is a $k$-horn in some simplicial set such that all of the edges of $\Lambda^j[k]$ are sent to almost-$I$-edges, then all of the edges of $\Delta[k]$ are sent to almost-$I$-edges in $\Delta[k]\sqcup_{\Lambda^j[k]} X$. Furthermore, the inclusion $X\hookrightarrow \Delta[k]\sqcup_{\Lambda^j[k]} X$ can be witnessed as a pushout of almost-$I$-augmented horn inclusion.
\end{lem}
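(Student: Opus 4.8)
The plan is to prove the two assertions separately, deducing the statement about edges from the simplicial $2$-out-of-$3$ property for almost-$I$-edges and the ``furthermore'' statement from a pasting-of-pushouts argument.

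For the statement about edges, I would first record that being an almost-$I$-edge is preserved under maps of simplicial sets: if $e\colon\Delta[1]\to Y$ factors through $\iota\colon\Delta[1]\hookrightarrow\mcT^I$, then so does $f\circ e$ for any $f\colon Y\to Z$. Then I would split on $k$. When $k\geq 3$, every edge of $\Delta[k]$ already lies in $\Lambda^j[k]$: an edge with vertex set $\{a,b\}$ lies in $d_\ell\Delta[k]$ for each $\ell\notin\{a,b\}$, and since there are at least four faces we may take $\ell\neq j$. So every edge of $\Delta[k]$ is the image of an almost-$I$-edge of $\Lambda^j[k]$ and hence is an almost-$I$-edge in $\Delta[k]\sqcup_{\Lambda^j[k]}X$. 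When $k=2$, the two edges of $\Delta[2]$ that lie in $\Lambda^j[2]$ are almost-$I$-edges in the pushout by the same reasoning, and the remaining edge is then an almost-$I$-edge by Proposition~\ref{prop:almost2outof3}, applied to the $2$-simplex $\Delta[2]\to\Delta[2]\sqcup_{\Lambda^j[2]}X$.

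For the ``furthermore'' part, I would first fix the edge along which to augment: choose $i$ in the nonempty set $\{j-1,j\}\cap\{0,\dots,k-1\}$, so that $j\in\{i,i+1\}$ and, since $k\geq 2$, the $i\to(i+1)$ edge lies in $\Lambda^j[k]$. Write $u\colon\Lambda^j[k]\to X$ for the given horn and let $e\colon\Delta[1]\to X$ be the image under $u$ of the $i\to(i+1)$ edge; by hypothesis $e$ is an almost-$I$-edge, so $e=\phi\circ\iota$ for some $I$-augmented unordered triangulation $\mcT^I$ with distinguished inclusion $\iota\colon\Delta[1]\hookrightarrow\mcT^I$ and some $\phi\colon\mcT^I\to X$. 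Using $\phi$, the horn $u$ extends to a map $\psi\colon\Lambda^j[k]^{\mcT^I}_{i\to i+1}\to X$ restricting to $u$ on $\Lambda^j[k]$ and to $\phi$ on the glued-in copy of $\mcT^I$ (these agree on $\Delta[1]$ precisely because $u$ sends the $i\to(i+1)$ edge to $e=\phi\circ\iota$). I would then run the pasting argument. First, the square with top row the inclusion $\Lambda^j[k]\hookrightarrow\Delta[k]$, bottom row the inclusion $\Lambda^j[k]^{\mcT^I}_{i\to i+1}\hookrightarrow\Delta[k]^{\mcT^I}_{i\to i+1}$, and vertical maps the evident ones, is a pushout: it is the outer-right portion of the grid obtained by pasting the pushout square presenting $\Lambda^j[k]^{\mcT^I}_{i\to i+1}$ as a pushout of $\iota$ along the $i\to(i+1)$ edge of $\Lambda^j[k]$ with the pushout rectangle presenting $\Delta[k]^{\mcT^I}_{i\to i+1}$ as a pushout of $\iota$ along the $i\to(i+1)$ edge of $\Delta[k]$ (noting that the latter edge factors through $\Lambda^j[k]$), together with the pasting law. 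Pasting this pushout square above the pushout square defining $P:=X\sqcup_{\Lambda^j[k]^{\mcT^I}_{i\to i+1}}\Delta[k]^{\mcT^I}_{i\to i+1}$ (formed along $\psi$), and using that $\psi$ restricted to $\Lambda^j[k]$ is $u$, the pasting law gives $P\cong\Delta[k]\sqcup_{\Lambda^j[k]}X$ compatibly with the canonical maps. Hence the square defining $P$ exhibits $X\hookrightarrow\Delta[k]\sqcup_{\Lambda^j[k]}X$ as a pushout of the almost-$I$-augmented horn inclusion $\Lambda^j[k]^{\mcT^I}_{i\to i+1}\hookrightarrow\Delta[k]^{\mcT^I}_{i\to i+1}$ along $\psi$.

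The only step needing real care is the bookkeeping in this pasting: confirming that the maps assembled out of pushouts agree on the relevant copies of $\Delta[1]$, in particular that the $i\to(i+1)$ edge of $\Delta[k]$ and the image of $e$ become identified in $\Delta[k]\sqcup_{\Lambda^j[k]}X$ (which is automatic, since that edge already lies in $\Lambda^j[k]$ and is therefore glued to $e$), and keeping track of which square the pasting law produces as a pushout. Apart from the appeal to Proposition~\ref{prop:almost2outof3} in the $k=2$ case, everything here is formal manipulation of pushouts.
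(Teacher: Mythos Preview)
Your proof is correct and follows essentially the same approach as the paper. The paper's own argument is much terser---it dispatches the ``furthermore'' clause in a single sentence by simply asserting that the inclusion can be upgraded to an almost-$I$-augmented horn pushout because the relevant edge is an almost-$I$-edge---while you carefully spell out the pasting-of-pushouts argument that underlies this assertion; but the content is the same.
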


\begin{proof}
For $k>2$, there are no new edges added in the pushout, so the first claim is immediate. For $k=2$, since the two edges of $\Lambda^j[2]$ are sent to almost-$I$-edges, the new edge is also an almost-$I$-edge in the pushout $X\sqcup_{\Lambda^j[2]}\Delta[2]$ by applying the simplicial 2-out-of-3 property from Proposition \ref{prop:almost2outof3}. In each of these cases, the inclusion $X\hookrightarrow \Delta[k]\sqcup_{\Lambda^j[k]} X$ can be upgraded to an almost-$I$-augmented horn pushout because every edge of the horn in $X$ is an almost-$I$-edge. 
\end{proof}

Iterated application of Lemma \ref{lem:allalmostpushout} yields the following corollary.

\begin{cor}\label{cor:allalmostpushout}
Given a simplicial set $I$ and $\Delta[1]\hookrightarrow I$, if $A\hookrightarrow B$ is a sequence of pushouts of $k$-horns for varying $k\geq 2$, and $A\to X$ is a map such that all of the edges of $A$ are sent to almost-$I$-edges, then all of the edges of $B$ are sent to almost-$I$-edges in $B\sqcup_{A} X$. Furthermore, the inclusion $X\hookrightarrow B\sqcup_{A} X$ can be witnessed as a sequence of pushouts of almost-$I$-augmented horn inclusions.
\end{cor}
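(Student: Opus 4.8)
The plan is to run a transfinite induction along a presentation of $A\hookrightarrow B$ as a sequence of horn pushouts, feeding Lemma~\ref{lem:allalmostpushout} into each successor stage. Concretely, write $A\hookrightarrow B$ as the transfinite composite of a chain $A=B_0\hookrightarrow B_1\hookrightarrow\cdots\hookrightarrow B_\beta\hookrightarrow\cdots$ indexed by ordinals $\beta\leq\lambda$, where at a successor $B_{\beta+1}=B_\beta\sqcup_{\Lambda^{j_\beta}[k_\beta]}\Delta[k_\beta]$ is a single $k_\beta$-horn pushout with $k_\beta\geq 2$, and $B_\mu=\colim_{\beta<\mu}B_\beta$ at limits. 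Pushing out $A\to X$ stagewise, set $X_\beta=B_\beta\sqcup_A X$. Since pushout along the fixed map $A\to X$ commutes with colimits, these assemble into a chain $X=X_0\hookrightarrow X_1\hookrightarrow\cdots$ with $\colim_{\beta\leq\lambda}X_\beta=B\sqcup_A X$, and by pasting of pushout squares each map $X_\beta\hookrightarrow X_{\beta+1}$ is the pushout of the ordinary horn inclusion $\Lambda^{j_\beta}[k_\beta]\hookrightarrow\Delta[k_\beta]$ along the composite $\Lambda^{j_\beta}[k_\beta]\to B_\beta\to X_\beta$.

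I would then prove by induction on $\beta$ the conjunction of two statements: (i) every edge of $B_\beta$ is sent to an almost-$I$-edge under $B_\beta\to X_\beta$; and (ii) $X_0\hookrightarrow X_\beta$ can be witnessed as a sequence of pushouts of almost-$I$-augmented horn inclusions. The base case $\beta=0$ is exactly the hypothesis on $A\to X$, together with the empty sequence. For a successor step, (i) at stage $\beta$ shows that the composite $\Lambda^{j_\beta}[k_\beta]\to B_\beta\to X_\beta$ sends every edge of $\Lambda^{j_\beta}[k_\beta]$ to an almost-$I$-edge, since almost-$I$-edges are preserved by postcomposition with an arbitrary map of simplicial sets. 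Hence Lemma~\ref{lem:allalmostpushout} applies: every edge of $\Delta[k_\beta]$ is sent to an almost-$I$-edge in $X_{\beta+1}$, and $X_\beta\hookrightarrow X_{\beta+1}$ is a pushout of an almost-$I$-augmented horn inclusion. Every edge of $B_{\beta+1}$ lies either in $B_\beta$---hence is carried to an almost-$I$-edge of $X_{\beta+1}$ by $B_\beta\to X_\beta\to X_{\beta+1}$---or in $\Delta[k_\beta]$, so (i) holds at $\beta+1$, and appending the new pushout to the sequence from (ii) gives (ii) at $\beta+1$. At a limit ordinal $\mu$, statement (ii) holds because $X_0\hookrightarrow X_\mu$ is the transfinite composite of the sequences already built, which is again a sequence of pushouts of almost-$I$-augmented horn inclusions; and for (i), any edge of $B_\mu=\colim_{\beta<\mu}B_\beta$ lies in the image of some $B_\beta$ (colimits of simplicial sets are computed levelwise), so its image in $X_\mu$ is the image of an almost-$I$-edge of $X_\beta$ under $X_\beta\to X_\mu$, hence is an almost-$I$-edge. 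Taking $\beta=\lambda$ yields the corollary.

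The argument is essentially bookkeeping, so I do not anticipate a serious obstacle; the one point that genuinely needs care is making the inductive hypothesis strong enough. Statement (i)---that \emph{all} edges of $B_\beta$, not merely the newly created ones, become almost-$I$-edges---must be carried along, because it is precisely the hypothesis needed to invoke Lemma~\ref{lem:allalmostpushout} at the next stage; omitting it leaves a gap at the successor step. The only other things to record are the elementary facts, used throughout, that pasting pushout squares realizes each $X_\beta\hookrightarrow X_{\beta+1}$ as a single horn pushout and that a transfinite composite of sequences of pushouts of almost-$I$-augmented horn inclusions is again such a sequence.
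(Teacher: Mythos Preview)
Your proposal is correct and is exactly the approach the paper takes: the paper's proof is the single sentence ``Iterated application of Lemma~\ref{lem:allalmostpushout} yields the following corollary,'' and your argument spells out precisely that iteration via transfinite induction along the chain of horn pushouts.
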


Another ingredient to the proof of Proposition \ref{prop:JandKanodyneready} is the following observation that the given inclusions are themselves a sequence of pushouts of ordinary $k$-horns.

\begin{lem}\label{lem:JandKhornpushouts}
The inclusions $\Delta[1]\hookrightarrow J$ and $\kappa\colon \Delta[1]\hookrightarrow K$ are obtained via a sequence of pushouts of $k$-horn inclusions for $k\geq 2$.
\end{lem}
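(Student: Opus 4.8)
The goal is to show that each of the inclusions $\Delta[1]\hookrightarrow J$ and $\kappa\colon\Delta[1]\hookrightarrow K$ can be built up from $\Delta[1]$ by a finite sequence of pushouts of ordinary horn inclusions $\Lambda^k[m]\hookrightarrow\Delta[m]$ with $k$ inner or outer, $m\geq 2$. The plan is to treat the two cases separately, handling $K$ by exploiting its explicit construction in Example \ref{ex:K} and handling $J$ by a direct skeletal analysis.

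\textbf{The case of $K$.}
First I would unwind the pushout description of $K$ from Example \ref{ex:K}. There $P$ is built from $\Delta[1]$ in two stages: glue a $\Delta[2]$ along the edge $d^1$ (a $\Lambda^1[2]\hookrightarrow\Delta[2]$ pushout after first including the opposite edge), and then glue another $\Delta[2]$; then $K$ is obtained from $P$ by a further $\Delta[2]$-pushout identifying the two boundary copies appropriately. The point is that each stage attaches a single nondegenerate $2$-simplex along a subcomplex of its boundary containing at least one edge, so each stage is a pushout of a $2$-horn inclusion $\Lambda^j[2]\hookrightarrow\Delta[2]$ (possibly preceded by harmless pushouts of lower boundary inclusions, which are themselves pushouts of horn inclusions in the range $m\geq 2$ only after we absorb the $1$-dimensional part into $\Delta[1]$ — so I must be slightly careful and check that the required $1$-skeleton is already present, which it is because we start from $\Delta[1]$ and the attaching maps only ever add one new edge at a time via a $2$-simplex). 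Composing these pushouts exhibits $\Delta[1]\hookrightarrow K$ as a sequence of pushouts of $2$-horn inclusions.

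\textbf{The case of $J$.}
For $J=N(\mathbb{I})$, the nerve of the free-living isomorphism, I would proceed by a skeletal filtration: $J$ has two vertices, two nondegenerate edges (call them $e\colon 0\to 1$ and $e^{-1}\colon 1\to 0$), and in each dimension $m\geq 2$ finitely many nondegenerate simplices, each of which is a string of alternating $e$'s and $e^{-1}$'s. Starting from the edge $\Delta[1]=\langle e\rangle$, I would first attach $e^{-1}$ together with the two nondegenerate $2$-simplices witnessing $e\circ e^{-1}=\id$ and $e^{-1}\circ e=\id$; concretely, attaching $e^{-1}$ and then these $2$-simplices is a sequence of pushouts of $\Lambda^1[2]\hookrightarrow\Delta[2]$ (the new edge $e^{-1}$ can be introduced as part of the first $2$-simplex, since a $\Lambda^1[2]$ already containing $e$ has only one missing edge). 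Then I would induct on dimension: given the $(m-1)$-skeleton already built, each nondegenerate $m$-simplex $\sigma$ of $J$ has all of its faces already present (because $J$ is $2$-coskeletal — it is a nerve of a category, in fact of a groupoid — so $\partial\Delta[m]\hookrightarrow\Delta[m]$ restricted to the image is an iso on everything but the top cell once $m\geq 3$), and in fact $\sigma$ has all its $(m-1)$-faces present, so attaching $\sigma$ is a pushout of $\Lambda^k[m]\hookrightarrow\Delta[m]$ for any $k$ — indeed even of $\partial\Delta[m]\hookrightarrow\Delta[m]$, which is itself a pushout sequence of horn inclusions for $m\geq 2$. The key structural facts I am using are: (i) $J$ is the nerve of a category, hence $2$-coskeletal, so there are no "new edges" and no "new lower faces" above dimension $2$; (ii) in dimension $2$, each nondegenerate simplex has an inner face available, letting us use an inner horn; and (iii) every nondegenerate simplex of $J$ is determined by its alternating edge-word, so the filtration terminates after finitely many steps in each dimension and $J$ is genuinely exhausted.

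\textbf{Main obstacle.}
The delicate point — and the step I expect to require the most care — is the bookkeeping in dimension $2$ for both $J$ and $K$: one must verify that when a new $2$-simplex is attached, the horn it is attached along genuinely contains enough of its boundary (at least two of its three edges, with the third being the only new $1$-cell) so that the attachment is a pushout of a $2$-horn inclusion rather than merely of $\partial\Delta[2]\hookrightarrow\Delta[2]$. In higher dimensions everything is automatic from $2$-coskeletality, and boundary inclusions $\partial\Delta[m]\hookrightarrow\Delta[m]$ for $m\geq 2$ are themselves sequences of horn pushouts (e.g.\ fill $\Lambda^0[m]\hookrightarrow\Delta[m]$ after first building the missing face, inductively), so no genuine difficulty arises there. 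I would therefore organize the write-up as: (1) reduce higher-dimensional attachments to the standard fact that $\partial\Delta[m]\hookrightarrow\Delta[m]$ is a horn-pushout sequence for $m\geq 2$; (2) do the explicit low-dimensional analysis for $J$; (3) do the explicit low-dimensional analysis for $K$ using Example \ref{ex:K}.
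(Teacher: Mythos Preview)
Your treatment of $K$ is fine and matches the paper's approach. The genuine gap is in your treatment of $J$: the claim that $\partial\Delta[m]\hookrightarrow\Delta[m]$ is a sequence of pushouts of horn inclusions for $m\geq 2$ is false. Indeed, any such composite is anodyne in the Kan--Quillen model structure and hence a weak equivalence, whereas $\partial\Delta[m]\hookrightarrow\Delta[m]$ models $S^{m-1}\hookrightarrow D^m$, which is not a weak equivalence for $m\geq 2$. Your parenthetical recipe ``fill $\Lambda^0[m]\hookrightarrow\Delta[m]$ after first building the missing face, inductively'' just pushes the problem down a dimension: building the missing $(m-1)$-face requires a $\partial\Delta[m-1]\hookrightarrow\Delta[m-1]$ attachment, and the recursion never bottoms out at a horn. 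Consequently, once you have completed the $(m-1)$-skeleton of $J$, attaching either nondegenerate $m$-simplex is a genuine $\partial\Delta[m]$-cell attachment, not a horn attachment, and your skeletal filtration does not exhibit $\Delta[1]\hookrightarrow J$ as a composite of horn pushouts.

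The paper avoids this by exploiting a feature of $J$ you did not use: for each nondegenerate $n$-simplex of $J$, only the $d_0$ and $d_n$ faces are nondegenerate; all inner faces are degenerate and hence already present. The paper's induction therefore maintains, at each stage, all $(n-1)$-simplices together with \emph{exactly one} of the two nondegenerate $n$-simplices. One of the two nondegenerate $(n+1)$-simplices then has all of its faces present except for the missing $n$-simplex, so it can be attached along a single $(n+1)$-horn; this simultaneously supplies the second $n$-simplex and one $(n+1)$-simplex, and the induction continues. The point is that one must \emph{never} complete the $n$-skeleton before moving up a dimension; your filtration does exactly that, which is why it fails.
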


\begin{proof}
We first consider $\Delta[1]\hookrightarrow J$. Recall that $J$ has precisely two non-degenerate $n$-simplices for all $n\geq 0$. Furthermore, for each $n$-simplex, only the $d_0$ and $d_n$ faces are non-degenerate. We may therefore inductively build $J$ from $\Delta[1]$ as follows: for the base case, we observe that $\Delta[1]\cin J$ contains both 0-simplices and one of the non-degenerate 1-simplices. Now, assuming for $n\geq 1$ that we have glued in all $(n-1)$-simplices and exactly one of the non-degenerate $n$-simplices, we may glue in one of the $(n+1)$-simplices along an $(n+1)$-horn (where $n+1\geq 2$) since it is missing exactly one of its faces. We then have all of the $n$-simplices and exactly one of the non-degenerate $(n+1)$-simplices.

Now let us consider $\kappa\colon\Delta[1]\hookrightarrow K$. We build $K$ out of $\Delta[1]$ by pushouts along outer 2-horns where the $0\to 2$ edge is degenerate.
\end{proof}

Combining Lemma \ref{lem:JandKhornpushouts} and Corollary \ref{cor:allalmostpushout} yields the following corollary.

\begin{cor}
Given a simplicial set $X$, if an edge $\Delta[1]\to X$ is an almost-$J$-edge, then the inclusion $X\hookrightarrow J\sqcup_{\Delta[1]} X$ can be witnessed as a pushout of almost-$J$-augmented horns. Similarly, if $\Delta[1]\to X$ is an almost-$K$-edge, then the inclusion $X\hookrightarrow K\sqcup_{\Delta[1]} X$ can be witnessed as a pushout of almost-$K$-augmented horns.
\end{cor}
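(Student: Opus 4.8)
The plan is to combine Lemma \ref{lem:JandKhornpushouts} with Corollary \ref{cor:allalmostpushout}, which between them already contain all of the content. For the $J$-case, recall from Lemma \ref{lem:JandKhornpushouts} that $\Delta[1]\hookrightarrow J$ is obtained as a sequence of pushouts of $k$-horn inclusions with $k\geq 2$. This is exactly the input required by Corollary \ref{cor:allalmostpushout}, which I would apply with $A=\Delta[1]$, $B=J$, $I=J$, and with the map $A\to X$ taken to be the given almost-$J$-edge $\Delta[1]\to X$; note that then $B\sqcup_{A}X = J\sqcup_{\Delta[1]}X$.

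Before invoking the corollary I would verify its hypothesis, namely that every edge of $A=\Delta[1]$ is sent to an almost-$J$-edge of $X$. The unique non-degenerate $1$-simplex of $\Delta[1]$ is the edge $0\to 1$, which is an almost-$J$-edge by assumption. The degenerate $1$-simplices of $\Delta[1]$ are almost-$J$-edges as well: for any vertex $x$, the degenerate edge $s_0(x)$ equals the composite of $\iota\colon\Delta[1]\hookrightarrow J$ with $J\to\ast\xrightarrow{x}X$, so it is a $J$-edge and hence a $\mcT^J$-edge for the size-$1$ augmented triangulation $\mcT^J=J$. With the hypothesis checked, Corollary \ref{cor:allalmostpushout} gives precisely that $X\hookrightarrow J\sqcup_{\Delta[1]}X$ can be witnessed as a sequence of pushouts of almost-$J$-augmented horn inclusions (and, as a byproduct, that every edge of $J$ becomes an almost-$J$-edge in the pushout).

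The $K$-case is handled identically, using the other half of Lemma \ref{lem:JandKhornpushouts}: $\kappa\colon\Delta[1]\hookrightarrow K$ is a sequence of pushouts of outer $2$-horn inclusions, so Corollary \ref{cor:allalmostpushout} applies with $A=\Delta[1]$, $B=K$, $I=K$, and the almost-$K$-edge $\Delta[1]\to X$ as the attaching map, yielding that $X\hookrightarrow K\sqcup_{\Delta[1]}X$ is a sequence of pushouts of almost-$K$-augmented horn inclusions. There is essentially no obstacle here, since all the real work has already been isolated in Lemma \ref{lem:JandKhornpushouts} and Corollary \ref{cor:allalmostpushout}; the only point demanding a moment's care is the bookkeeping in the hypothesis of Corollary \ref{cor:allalmostpushout}, i.e.\ confirming that all edges of $\Delta[1]$, not merely the non-degenerate one, map to almost-$J$-edges (respectively almost-$K$-edges), which as noted is automatic for the degenerate edges.
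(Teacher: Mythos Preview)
Your proposal is correct and follows exactly the approach the paper intends: the paper states this corollary immediately after Lemma \ref{lem:JandKhornpushouts} with only the sentence ``Combining Lemma \ref{lem:JandKhornpushouts} and Corollary \ref{cor:allalmostpushout} yields the following corollary,'' and your argument spells out precisely that combination. Your extra verification that the degenerate edges of $\Delta[1]$ are automatically $J$-edges (respectively $K$-edges) is a reasonable bit of bookkeeping that the paper leaves implicit.
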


The last ingredient to the proof of Proposition \ref{prop:JandKanodyneready} is the observation that each inclusion
\[
(\partial\Delta[1]\times I ) \cup (\Delta[1]\times \{\varepsilon\})\longhookrightarrow (\Delta[1]\times I)
\]
for $I=J,K$ and $\varepsilon=0,1$ is also obtained via a sequence of pushouts of ordinary $k$-horns. The following two lemmas handle the two cases $I=J$ and $I=K$ separately.

\begin{lem}\label{lem:claimforJ}
The inclusion $(\partial \Delta[1]\times J)\cup(\Delta[1]\times \{0\})\hookrightarrow \Delta[1]\times J$ is a sequence of pushouts of $k$-horn inclusions for varying $k\geq 2$.
\end{lem}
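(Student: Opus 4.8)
The plan is to reduce the statement to the already-established prism combinatorics of Lemma~\ref{lem:an2fortherest}. By Lemma~\ref{lem:JandKhornpushouts} we may write $J=\colim_{m\geq 1}J^{[m]}$, where $J^{[1]}=\Delta[1]$ (which contains the vertex $\{0\}$) and each inclusion $J^{[m]}\hookrightarrow J^{[m+1]}$ is a pushout of a horn inclusion $\Lambda^{i_m}[k_m]\hookrightarrow\Delta[k_m]$ with $k_m\geq 2$. Set $C_m:=(\partial\Delta[1]\times J)\cup(\Delta[1]\times J^{[m]})$, so that $\colim_m C_m=\Delta[1]\times J$ and the map $(\partial\Delta[1]\times J)\cup(\Delta[1]\times\{0\})\hookrightarrow\Delta[1]\times J$ is the countable composite of $(\partial\Delta[1]\times J)\cup(\Delta[1]\times\{0\})\hookrightarrow C_1$ followed by the maps $C_m\hookrightarrow C_{m+1}$. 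It therefore suffices to exhibit each of these as a finite sequence of pushouts of $k$-horn inclusions with $k\geq 2$.

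For the first map: since $J^{[1]}=\Delta[1]$, it is obtained by gluing $\Delta[1]\times\Delta[1]$ onto $(\partial\Delta[1]\times J)\cup(\Delta[1]\times\{0\})$ along $(\partial\Delta[1]\times\Delta[1])\cup(\Delta[1]\times\{0\})$, hence is a pushout of $(\partial\Delta[1]\times\Delta[1])\cup(\Delta[1]\times\{0\})\hookrightarrow\Delta[1]\times\Delta[1]$. Viewing $\Delta[1]\times\Delta[1]$ as the nerve of $[1]\times[1]$, the subcomplex on the left is the union of three of the four boundary edges of the square, and the square is built from it by two $2$-horn pushouts: first attach the nondegenerate triangle both of whose non-diagonal edges are already present, filling an inner $\Lambda^1[2]$ and creating the diagonal; then attach the other nondegenerate triangle, which now has exactly one missing face, filling an outer $2$-horn and creating the last boundary edge.

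For the maps $C_m\hookrightarrow C_{m+1}$: since $\Delta[1]\times-$ preserves pushouts and $J^{[m+1]}=J^{[m]}\cup_{\Lambda^{i_m}[k_m]}\Delta[k_m]$, the simplicial set $C_{m+1}$ is the pushout of $C_m$ along the inclusion
\[
(\partial\Delta[1]\times\Delta[k_m])\cup(\Delta[1]\times\Lambda^{i_m}[k_m])\hookrightarrow\Delta[1]\times\Delta[k_m],
\]
and one checks this is the correct pushout corner by observing that a simplex of $\Delta[1]\times\Delta[k_m]$ lies in $\partial\Delta[1]\times J$ exactly when its $\Delta[1]$-coordinate lands in $\partial\Delta[1]$, and lies in $\Delta[1]\times J^{[m]}$ exactly when its $\Delta[k_m]$-coordinate lands in $\Lambda^{i_m}[k_m]$. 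Now this displayed inclusion is precisely the map handled by Lemma~\ref{lem:an2fortherest} in the case $I'=\Delta[1]$ and $n=k_m\geq 2$: the edge hypothesis there is vacuous because every edge is a $\Delta[1]$-edge, and a $\Delta[1]$-augmented horn inclusion is just an ordinary horn inclusion (gluing $\Delta[1]$ along an edge by the identity does nothing). So Lemma~\ref{lem:an2fortherest} presents $C_m\hookrightarrow C_{m+1}$ as a finite composite of pushouts of $k$-horn inclusions with $k\geq 2$, and concatenating everything finishes the argument.

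The only genuine subtlety --- and the reason the $C_1$-step must be handled by hand rather than folded into the colimit --- is that $J$ cannot be built out of the single vertex $\{0\}$ using only horns of dimension $\geq 2$, since the edge $0\to 1$ has to be produced somewhere. Beginning the filtration at $\Delta[1]$ avoids this at the cost of the elementary square computation above; everything else is bookkeeping around the colimit together with the prism combinatorics already carried out in Lemma~\ref{lem:an2fortherest}.
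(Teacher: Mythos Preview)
Your argument is correct and is in fact a cleaner packaging of essentially the same underlying combinatorics as the paper's proof. Both proofs use the same filtration of $J$ by horn pushouts (the paper writes it as $B_0\subset B_1\subset\cdots$ with $B_\ell$ the sub-simplicial set generated by the nondegenerate $\ell$-simplex starting at $0$; this coincides with your $J^{[\ell]}$). The difference is in how the filtration steps are handled: the paper carries out the prism combinatorics by hand at each stage, producing the explicit Figure~\ref{fig:PQRSforJodotJ}, whereas you observe that each step $C_m\hookrightarrow C_{m+1}$ is a pushout of $\bigl((\{0\}\sqcup\{1\})\times\Delta[k_m]\bigr)\cup\bigl(\Delta[1]\times\Lambda^{i_m}[k_m]\bigr)\hookrightarrow\Delta[1]\times\Delta[k_m]$ and invoke Lemma~\ref{lem:an2fortherest} with $I'=\Delta[1]$, where the edge hypothesis becomes vacuous and $\Delta[1]$-augmented horns are ordinary horns. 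Your verification that this is the correct pushout (via the levelwise description of the pushout $J^{[m+1]}=J^{[m]}\cup_{\Lambda}\Delta[k_m]$, which guarantees that a simplex of $\Delta[k_m]$ lands in $J^{[m]}$ iff it lies in $\Lambda$) is the one point that deserves the sentence you gave it, and it is sound. The separate treatment of the initial step from $\{0\}$ to $\Delta[1]$ is necessary exactly for the reason you identify, and your two $2$-horn fillings of the square agree with the paper's $\ell=0$ case. What your route buys is economy and a clear reduction to the general prism lemma already proved; what the paper's direct analysis buys is that Lemma~\ref{lem:claimforJ} becomes independent of Lemma~\ref{lem:an2fortherest}.
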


\begin{proof}
For each $\ell\geq 0$, let $B_{\ell}$ denote the non-degenerate $\ell$-simplex of $J$ whose initial vertex is $0$. Each $B_{\ell}$ contains all $m$-simplices for $m<\ell$, so $J=\bigcup_{\ell\geq 0} B_{\ell}$. Notice that $B_0=\{0\}$, so $(\partial \Delta[1]\times J)\cup(\Delta[1]\times \{0\})$ is precisely $(\partial \Delta[1]\times J)\cup(\Delta[1]\times B_0)$. It therefore suffices to show that each inclusion
\[
(\partial \Delta[1]\times J)\cup(\Delta[1]\times B_{\ell})\hookrightarrow (\partial \Delta[1]\times J)\cup(\Delta[1]\times B_{\ell+1})
\]
for all $\ell\geq 0$ is a sequence of pushouts of $(\ell+1)$-horns and $(\ell+2)$-horns, with $(\ell+1)$-horns only necessary when $\ell\geq 1$. Recalling the notation from Figure \ref{fig:PQRSforaughorns} in the proof of Lemma \ref{lem:an2fortherest}, we depict in Figure \ref{fig:PQRSforJodotJ} the simplices of $\Delta[1]\times B_{\ell+1}$ that are not in $\Delta[1]\times B_{\ell}$.
\begin{figure}[h]
\caption{}
\label{fig:PQRSforJodotJ}
\vspace{3mm}
\begin{tikzcd}[row sep=small]
P_0     &                                             & {}     &                                                                          \\
        & Q^0_1 \arrow[lu, red] \arrow[ld]                 &                               & {}                        \\
P_1     &                                             & R(0)_1 \arrow[ll, green]     &                                                                          \\
 & Q^1_2 \arrow[lu] \arrow[ld]                &                         & S(0)^1_2 \arrow[lu] \arrow[ld]    \arrow[ll, blue]                                         \\
P_{2} &                                             & R(0)_{2} \arrow[ll, green] &                                                                          \\
        & Q^{2}_{3} \arrow[lu] \arrow[ld]         &                               & S(0)^{2}_{3} \arrow[lu] \arrow[ld] \arrow[ll, blue]                \\
\vdots &                                             & \vdots & 
\end{tikzcd}
\end{figure}
We can proceed by gluing in the $Q^i_{i+1}$ simplices together with the $S(0)^i_{i+1}$ simplices for each $i\geq 1$ via pushouts of $(\ell+1)$-horns, indicated by the blue arrows. Note that this first step is only necessary if $\ell\geq 1$. Then we can glue in via an $(\ell+2)$-horn the $P_0$ simplex with $Q^0_1$, as indicated by the red arrow. Finally, we glue each $P_i$ together with $R(0)_i$ for each $i\geq 1$, also via $(\ell+2)$-horns, as indicated by the green arrows.
\end{proof}

\begin{lem}\label{lem:claimforK}
Given $\varepsilon=0,1$, the inclusion $(\partial \Delta[1]\times K)\cup(\Delta[1]\times \{\varepsilon\})\hookrightarrow \Delta[1]\times K$ is a sequence of pushouts of 2-horn and 3-horn inclusions.
\end{lem}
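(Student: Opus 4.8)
Following the template of the proof of Lemma~\ref{lem:claimforJ}, but with a much shorter filtration since $K$ is two-dimensional, the plan is to factor the inclusion through stages coming from the cell structure of $K$ and to handle each stage using the $\Delta[1]\times(-)$-versions of the results already proved for ordinary horn inclusions, namely Lemma~\ref{lem:an1satisfied} and Lemma~\ref{lem:an2fortherest} taken with $I'=\Delta[1]$ (where a ``$\Delta[1]$-augmented horn inclusion'' is just an ordinary horn inclusion, since gluing a copy of $\Delta[1]$ along an edge changes nothing, and recalling that $(\id_{\Delta[1]})\odot- = \Delta[1]\times-$).

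Recall from Example~\ref{ex:K} and the proof of Lemma~\ref{lem:JandKhornpushouts} that $K$ is two-dimensional, that its two vertices are precisely the endpoints of $\kappa\colon\Delta[1]\hookrightarrow K$ (so $\{\varepsilon\}\hookrightarrow K$ factors as $\{\varepsilon\}\hookrightarrow\Delta[1]\hookrightarrow K$, the second map being $\kappa$ and $\{\varepsilon\}$ an endpoint of $\kappa$), and that $\kappa$ fits into a chain $\Delta[1]\hookrightarrow K_1\hookrightarrow K_2=K$ in which $\Delta[1]\hookrightarrow K_1$ attaches one of the two $2$-cells along the outer horn inclusion $\Lambda^0[2]\hookrightarrow\Delta[2]$ (the $0\to 2$ edge going to a degeneracy) and $K_1\hookrightarrow K_2$ attaches the other along $\Lambda^2[2]\hookrightarrow\Delta[2]$ (again with degenerate $0\to 2$ edge). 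Set $D_0=(\partial\Delta[1]\times K)\cup(\Delta[1]\times\{\varepsilon\})$, $D_1=(\partial\Delta[1]\times K)\cup(\Delta[1]\times\kappa)$, $D_2=(\partial\Delta[1]\times K)\cup(\Delta[1]\times K_1)$, and $D_3=\Delta[1]\times K$, so that the map to be analyzed is $D_0\hookrightarrow D_3$; I would show each $D_t\hookrightarrow D_{t+1}$ is a finite composite of pushouts of 2-horn and 3-horn inclusions.

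A routine intersection computation gives $D_0\cap(\Delta[1]\times\kappa) = (\partial\Delta[1]\times\kappa)\cup(\Delta[1]\times\{\varepsilon\})$ and $D_0\cup(\Delta[1]\times\kappa)=D_1$, so $D_0\hookrightarrow D_1$ is a pushout of $(\partial\Delta[1]\times\kappa)\cup(\Delta[1]\times\{\varepsilon\})\hookrightarrow\Delta[1]\times\kappa$; identifying $\kappa$ with $\Delta[1]$ and exchanging the two factors of $\Delta[1]\times\Delta[1]$, this is the $n=1$, $I=\Delta[1]$ case of Lemma~\ref{lem:an1satisfied}, hence a composite of 2-horn pushouts (concretely: fill the square prism over $\kappa$ with an inner 2-horn adding the diagonal edge, then an outer 2-horn adding the remaining vertical edge). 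Similarly, $D_1\hookrightarrow D_2$ is a pushout of $((\{0\}\sqcup\{1\})\times\Delta[2])\cup(\Delta[1]\times\Lambda^0[2])\hookrightarrow\Delta[1]\times\Delta[2]$ along the map induced by the attaching map of the relevant $2$-cell --- here one passes from the pushout $K_1=\Delta[1]\cup_{\Lambda^0[2]}\Delta[2]$ using that $\Delta[1]\times(-)$ and $\partial\Delta[1]\times(-)$ preserve pushouts --- and by Lemma~\ref{lem:an2fortherest} with $I'=\Delta[1]$, $n=2$, $i=0$, this is a composite of 2-horn and 3-horn pushouts; the identical argument with $\Lambda^2[2]$ in place of $\Lambda^0[2]$ handles $D_2\hookrightarrow D_3$. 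Concatenating the three stages proves the lemma; both values $\varepsilon=0,1$ are handled the same way, the $\varepsilon$-dependence entering only in the first stage, where both are covered by Lemma~\ref{lem:an1satisfied}. (Alternatively, and closer in spirit to the proof of Lemma~\ref{lem:claimforJ}, one can simply list the non-degenerate simplices of $\Delta[1]\times K$ not in $D_0$ using the $P$, $Q$, $R$, $S$ notation from the proof of Lemma~\ref{lem:an2fortherest}, applied over each of the few cells of $K$, and attach them directly by 2-horn and 3-horn pushouts; this avoids the pushout-preservation bookkeeping.)

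The step to watch is the first one, $D_0\hookrightarrow D_1$: since $\{\varepsilon\}$ is a \emph{vertex} of $K$ whereas the first cell attached in building $K$ out of $\Delta[1]$ is the \emph{edge} $\kappa$, this stage is not of the form covered by Lemma~\ref{lem:an2fortherest} and must be treated separately as a prism over an edge, via Lemma~\ref{lem:an1satisfied} and the symmetry of $\Delta[1]\times\Delta[1]$. A minor point worth noting is that the two attaching maps $\Delta[2]\to K$ of the $2$-cells are not injective (their vertex strings are $a,b,a$ and $b,a,b$); this is harmless, since Lemma~\ref{lem:an2fortherest} only requires $D_t\hookrightarrow D_{t+1}$ to be a pushout of the stated map, and a pushout of a finite composite of 2-horn and 3-horn pushouts is again one.
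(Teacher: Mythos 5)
Your proof is correct, but it takes a genuinely different route from the paper's. The paper's proof is a direct, hands-on construction: it lists the non-degenerate simplices of $\Delta[1]\times K$ missing from the domain and attaches them explicitly, first the four missing $1$-simplices (each paired with a $2$-simplex) via four $2$-horn pushouts, then the six $3$-simplices (each paired with a remaining $2$-simplex) via $3$-horn pushouts in a carefully chosen order so that each $3$-simplex is missing exactly one face when it is attached. You instead filter by the cell structure of $K$, writing the inclusion as $D_0\hookrightarrow D_1\hookrightarrow D_2\hookrightarrow D_3$ along $\{\varepsilon\}\subseteq\kappa\subseteq K_1\subseteq K$, and reduce each stage to a pushout of a standard prism inclusion: the first stage to the $n=1$ case of Lemma~\ref{lem:an1satisfied} (after exchanging the two $\Delta[1]$ factors), and the two $2$-cell stages to Lemma~\ref{lem:an2fortherest} with $I'=\Delta[1]$, for which the edge hypothesis is vacuous and $\Delta[1]$-augmented horns are ordinary horns. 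Both delicate points are correctly identified and handled: the first stage is a prism over an edge rather than over a horn attachment, and the attaching maps $\Delta[2]\to K$ are non-injective, which is harmless because the cited lemma only requires the stage to be \emph{a pushout of} the prism inclusion, and $\Delta[1]\times(-)$, $\partial\Delta[1]\times(-)$ preserve the relevant pushouts. The trade-off: the paper's argument is self-contained and makes the individual horn fillings visible, whereas yours is shorter modulo the earlier lemmas, avoids the ordering bookkeeping, and generalizes immediately to any $I$ obtained from $\Delta[1]$ by attaching cells along horns (essentially the same reduction that underlies Corollary~\ref{cor:bijon0simpIaug}).
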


\begin{proof}
We start by including all of the missing 1-simplices, which we can do working in $\Delta[1]\times \sk_1 K$. For this proof, let us rename the $0$ and $1$ vertices of $K$ to $a$ and $b$, respectively, and let $f,g,h$ denote the non-degenerate edges of $K$. The 1-simplices of $\Delta[1]\times K$ that we start with can then be pictured as in the first diagram of Figure \ref{fig:pinkgreen}.
\begin{figure}[h]
\caption{}
\label{fig:pinkgreen}
\vspace{3mm}
\begin{tikzcd}[execute at end picture={
\foreach \Nombre in  {A1,A2,A3,A4,A5,A6,A7,A8,B1,B2,B3,B4,B5,B6,B7,B8,C1,C2,C3,C4,C5,C6,C7,C8,D1,D2,D3,D4,D5,D6,D7,D8}
  {\coordinate (\Nombre) at (\Nombre.center);}
\fill[greeo,opacity=0.3] 
  (B3) -- (B4) -- (C4) -- cycle;
\fill[pur,opacity=0.3] 
  (B5) -- (B6) -- (C6) -- cycle;
\fill[pur,opacity=0.3] 
  (B7) -- (B8) -- (C8) -- cycle;
\fill[greeo,opacity=0.3] 
  (B5) -- (C5) -- (C6) -- cycle;
\fill[pur,opacity=0.3] 
  (B7) -- (C7) -- (C8) -- cycle;
\fill[greeo,opacity=0.3] 
  (A7) -- (B7) -- (B8) -- cycle;
\fill[greeo,opacity=0.3] 
  (C7) -- (D7) -- (D8) -- cycle;
}
]
|[alias=A1]|{(0,b)} \arrow[r, "0\times f"]                                      & |[alias=B1]|{(0,a)} \arrow[r, "0\times g"] \arrow[d, "{\Delta[1]\times a}"] & |[alias=C1]|{(0,b)} \arrow[r, "0\times h"]                                      & |[alias=D1]|{(0,a)} \arrow[d, "{\Delta[1]\times a}"]         \\
|[alias=A2]|{(1,b)} \arrow[r, "1\times f"']                                      & |[alias=B2]|{(1,a)} \arrow[r, "1\times g"']                              & |[alias=C2]|{(1,b)} \arrow[r, "1\times h"']                                      & |[alias=D2]|{(1,a)}                   \\
|[alias=A3]|{} \arrow[r]                                      & |[alias=B3]|{} \arrow[r] \arrow[d, dashed] \arrow[rd, dotted] & |[alias=C3]|{} \arrow[r]                                      & |[alias=D3]|{} \arrow[d]         \\
|[alias=A4]|{} \arrow[r]                                      & |[alias=B4]|{} \arrow[r, dashed]                              & |[alias=C4]|{} \arrow[r]                                      & |[alias=D4]|{}                   \\
|[alias=A5]|{} \arrow[r] \arrow[d, dotted]                    & |[alias=B5]|{} \arrow[d] \arrow[r, dashed] \arrow[rd, dashed] & |[alias=C5]|{} \arrow[r] \arrow[d, dotted]                    & |[alias=D5]|{} \arrow[d]         \\
|[alias=A6]|{} \arrow[r]                                      & |[alias=B6]|{} \arrow[r]                                      & |[alias=C6]|{} \arrow[r]                                      & |[alias=D6]|{}                   \\
|[alias=A7]|{} \arrow[r, dashed] \arrow[d] \arrow[rd, dotted] & |[alias=B7]|{} \arrow[d, dashed] \arrow[r] \arrow[rd]         & |[alias=C7]|{} \arrow[r, dashed] \arrow[d] \arrow[rd, dotted] & |[alias=D7]|{} \arrow[d, dashed] \\
|[alias=A8]|{} \arrow[r]                                      & |[alias=B8]|{} \arrow[r]                                      & |[alias=C8]|{} \arrow[r]                                      & |[alias=D8]|{}                  
\end{tikzcd}
\end{figure}
The latter three diagrams in Figure \ref{fig:pinkgreen} show the order in which we can glue in four 2-horns to end up with all of the 1-simplices of $\Delta[1]\times K$.

From here, we must glue in the missing 3-simplices as well as the remaining 2-simplices. First, we attach the 3-simplices outlined by the edges
\[
\begin{tikzcd}
{} \arrow[d, "{\Delta[1]\times a}"'] &                            &    & {} \arrow[r, "0\times g"] & {} \arrow[r, "0\times h"] & {} \arrow[d, "{\Delta[1]\times a}"] \\
{} \arrow[r, "1\times g"']           & {} \arrow[r, "1\times h"'] & {} &                           &                           & {}                                 
\end{tikzcd}.
\]
The 3-simplex on the left is only missing its $d_1$ face, and the 3-simplex on the right is only missing its $d_2$ face (which are different 2-simplices), so we can attach them both via 3-horn extensions. Having done so, we can then glue in the 3-simplex
\[
\begin{tikzcd}
{} \arrow[r, "0\times g"] & {} \arrow[d, "{\Delta[1]\times b}"] &    \\
                          & {} \arrow[r, "1\times h"']          & {}
\end{tikzcd}
\]
that is now only missing its $d_0$ face (which was left empty in the diagram above). We now have all of the non-degenerate 3-simplices that involve the $h$ edges. For those involving the $f$ edges, the order
\[
\begin{tikzcd}
{} \arrow[r, "0\times f"] & {} \arrow[r, "0\times g"] & {} \arrow[d, "{\Delta[1]\times b}"] &  & {} \arrow[r, "0\times f"] & {} \arrow[d, "{\Delta[1]\times a}"] &    &  & {} \arrow[d, "{\Delta[1]\times b}"'] &                            &    \\
                          &                           & {}                                  &  &                           & {} \arrow[r, "1\times g"']          & {} &  & {} \arrow[r, "1\times f"']           & {} \arrow[r, "1\times g"'] & {}
\end{tikzcd}
\]
works because the first 3-simplex is only missing its $d_2$ face, then the second 3-simplex is only missing its $d_1$ face, and then the last 3-simplex is only missing its $d_3$ face.
\end{proof}

We now have all of the pieces to prove Proposition \ref{prop:JandKanodyneready}.

\begin{proof}[Proof of Proposition \ref{prop:JandKanodyneready}]
Let $I=J$ or $K$. We wish to show that the maps $(\partial I\odot I)\cup (I\odot \{\varepsilon\})\hookrightarrow I\odot I$ for $\varepsilon=0,1$ can be realized as a sequence of pushouts of almost-$I$-augmented horn inclusions. The central claim of the proof is that all of the missing simplices of $\Delta[1]\times I\cin I\otimes I$ can be glued in via a sequence of pushouts of $k$-horns for varying $k\geq 2$, which we proved for $I=J$ in Lemma \ref{lem:claimforJ} and for $I=K$ in Lemma \ref{lem:claimforK}. Now, we apply Lemma \ref{lem:allalmostpushout} to upgrade it to a sequence of pushouts of almost-$I$-augmented horn inclusions, and note that all of the new edges are also almost-$I$-edges. The last step is to glue in a copy of $I$ along the vertical edge that was missing when we started, i.e., the edge $\Delta[1]\times\{\varepsilon'\}$ where $\varepsilon'\neq \varepsilon$. We can witness this gluing as a sequence of pushouts of almost-$I$-augmented horn inclusions by Corollary \ref{cor:allalmostpushout}.
\end{proof}

\subsection{The resulting model structures}\label{sub:resulting}

Having proved Proposition \ref{prop:anodynereadyworks} and Proposition \ref{prop:JandKanodyneready}, and therefore Proposition \ref{prop:JandKanodyne}, Cisinski's theory gives us our desired model structures.

\begin{prop}\label{prop:JandKhtpyMS}
For $I=J$ or $K$, there is a cofibrantly generated model structure on $\sSet$ whose cofibrations are the monomorphisms, and whose fibrant objects are are the simplicial sets $X$ such that $X\to \ast$ has the right lifting property with respect to the set of almost-$I$-augmented horn inclusions.
\end{prop}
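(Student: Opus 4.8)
The plan is to apply Cisinski's Theorem \ref{thm:cisinskiMS}, feeding in the pointwise cylinders built in Subsection \ref{sub:pointwise} and the generating sets provided by Proposition \ref{prop:JandKanodyne}. Concretely, for $I=J$ I would take $\Lambda_I$ to be the almost-$J$-augmented horn inclusions together with the map $\{0\}\hookrightarrow J$, and for $I=K$ the almost-$K$-augmented horn inclusions together with the two maps $\{\e\}\hookrightarrow K$ for $\e=0,1$. Remark \ref{rmk:countability} guarantees that $\Lambda_I$ is, up to isomorphism, a countable set of monomorphisms, and Proposition \ref{prop:JandKanodyne} says that ${}^\lifts(\Lambda_I^\lifts)$ is an anodyne class for the cylinder $J\odot-$ (respectively $\kappa\odot-$). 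Theorem \ref{thm:cisinskiMS} then immediately yields a cofibrantly generated model structure on $\sSet$ whose cofibrations are the monomorphisms and whose fibrant objects are precisely the simplicial sets with the right lifting property against $\Lambda_I$.

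What is left is to identify this class of fibrant objects with the one in the statement, i.e., to show that the point-inclusions $\{\e\}\hookrightarrow I$ may be deleted from the generating set without changing the fibrant objects. One inclusion of classes is trivial. For the other, given a simplicial set $X$ with the right lifting property against every almost-$I$-augmented horn inclusion and a vertex $v\colon\{\e\}\to X$, I would first observe that the degenerate edge $s_0 v\colon\Delta[1]\to X$ is an $I$-edge: it factors as $\Delta[1]\xrightarrow{\iota}I\to\Delta[0]\xrightarrow{v}X$, since $\iota$ composed with $I\to\Delta[0]$ is forced to be the unique (hence degenerate) map $\Delta[1]\to\Delta[0]$. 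So $s_0 v$, and likewise every edge of $\Delta[1]$ under $s_0 v$, is an almost-$I$-edge.

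Then, using that $\iota\colon\Delta[1]\hookrightarrow I$ is a sequence of pushouts of $k$-horn inclusions with $k\geq 2$ (Lemma \ref{lem:JandKhornpushouts}), Corollary \ref{cor:allalmostpushout} applied to this sequence and to the map $s_0 v\colon\Delta[1]\to X$ shows that the pushout inclusion $X\hookrightarrow I\sqcup_{\Delta[1]}X$ (the pushout of $\iota$ along $s_0 v$) is a sequence of pushouts of almost-$I$-augmented horn inclusions. Since $X$ lifts against those, this inclusion admits a retraction $r\colon I\sqcup_{\Delta[1]}X\to X$, and the composite of $I\hookrightarrow I\sqcup_{\Delta[1]}X$ with $r$ restricts along $\{\e\}\hookrightarrow I$ to $v$ — because the vertex $\e$ of $I$ is glued to $v$ in the pushout and $r|_X=\id_X$ — which is the extension we need. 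I expect this last step, verifying that the generators $\{\e\}\hookrightarrow I$ are redundant, to be the only real point to get right; the bulk of the work lives in Proposition \ref{prop:JandKanodyne}, which we may assume, and everything else is a direct appeal to Cisinski's machinery. (For $I=J$ a single point-inclusion suffices in $\Lambda_J$ because the nontrivial automorphism of $J$ identifies $\{1\}\hookrightarrow J$ with $\{0\}\hookrightarrow J$.)
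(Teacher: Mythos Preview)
Your proposal is correct, and the first half---feeding $\Lambda_I$ into Theorem~\ref{thm:cisinskiMS} via Proposition~\ref{prop:JandKanodyne}---is exactly what the paper does. Where you diverge is in showing that the point-inclusions $\{\e\}\hookrightarrow I$ are redundant for detecting fibrant objects. You argue that $s_0v$ is an $I$-edge, invoke Lemma~\ref{lem:JandKhornpushouts} and Corollary~\ref{cor:allalmostpushout} to realize $X\hookrightarrow I\sqcup_{\Delta[1]}X$ as a composite of almost-$I$-augmented horn pushouts, and then retract. This works, but it is considerably more machinery than needed: the paper simply observes that $(\ast\hookrightarrow A)\lifts(Y\to\ast)$ for \emph{all} simplicial sets $A$ and $Y$, since the constant map $A\to\ast\xrightarrow{v}Y$ always solves the lifting problem. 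In particular, every $X$ whatsoever lifts against $\{\e\}\hookrightarrow I$, so these maps contribute nothing to the fibrancy condition. Your route has the virtue of staying entirely within the augmented-horn framework and not relying on the special form of the extra generators, but the paper's one-line argument is what you want here.
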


\begin{proof}
Let $S$ denote the set containing the maps $\{\varepsilon\}\hookrightarrow I$ for $\varepsilon=0,1$ together with the almost-$I$-augmented horn inclusions. By Proposition \ref{prop:JandKanodyne}, the set $S$ generates an $(I\odot -)$-anodyne class, so Theorem \ref{thm:cisinskiMS} gives us a model structure whose fibrant objects are the simplicial sets $X$ such that $S\lifts (X\to \ast)$. Since $(\ast\hookrightarrow A)\lifts(Y\to \ast)$ for all simplicial sets $A$ and $Y$, a simplicial set is fibrant in this model structure if it has lifts of almost-$I$-augmented horn inclusions.
\end{proof}

\begin{rmk}
Recall that Theorem \ref{thm:cisinskiMS} gives a description of the weak equivalences in this model structure as well, but that we do not get an explicit description of the fibrations.
\end{rmk}

To check that a simplicial set is fibrant in one of these model structures, it is actually not necessary to check all almost-$I$-augmented horn inclusions.

\begin{prop}\label{prop:almostIbecomesI}
Given $I\neq \Delta[1]$ with two 0-simplices and an inclusion $\Delta[1]\hookrightarrow I$, if $X$ is a simplicial set such that $X\to \ast$ has the right lifting property with respect to all $I$-augmented horn inclusions, then every almost-$I$-edge is an $I$-edge too.
\end{prop}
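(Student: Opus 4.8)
The plan is to induct on the size of an $I$-augmented unordered triangulation witnessing the given almost-$I$-edge, reducing the whole statement to the case of a single $2$-simplex. So I would fix an almost-$I$-edge $e\colon\Delta[1]\to X$; by definition it factors as $\Delta[1]\hookrightarrow\mcT^I\to X$ through the distinguished (un-augmented) outer edge of some $I$-augmented unordered triangulation $\mcT^I$, and I induct on the size $n$ of $\mcT^I$. If $n=1$ then $\mcT^I=I$ and the factorization is literally $\Delta[1]\hookrightarrow I\to X$, so $e$ is an $I$-edge.

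Suppose $n\geq 2$. Using the decomposition pictured in Figure \ref{fig:triangulationbreakdown}, I would write $\mcT^I$ as the union of the unique $2$-simplex $\delta$ of the underlying triangulation having $e$ as one of its edges, together with $I$-augmented unordered triangulations $\mcU^I$ and $\mcV^I$ of size strictly less than $n$ glued along the other two edges of $\delta$ (when one of those edges is already an outer edge of the polygon, the corresponding piece is simply a copy of $I$, of size $1$). Restricting $\mcT^I\to X$ to $\mcU^I$ and to $\mcV^I$ exhibits their distinguished edges as almost-$I$-edges arising from triangulations of smaller size, so by the inductive hypothesis their images in $X$ are $I$-edges; the same is true, trivially, for any of the two edges that already carries a copy of $I$. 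Hence $\delta\hookrightarrow\mcT^I\to X$ is a $2$-simplex of $X$ two of whose edges are $I$-edges, and one must show its third edge (the image of $e$) is an $I$-edge. Since that is exactly the statement in the case $n=2$, it suffices to establish the case $n=2$ directly.

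In the case $n=2$, let $\sigma\colon\Delta[2]\to X$ be a $2$-simplex two of whose edges are $I$-edges. Glue a copy of $I$ along each of those two edges and use the witnessing maps $I\to X$ to extend $\sigma$ to a map $W_2\to X$, where $W_2$ is $\Delta[2]$ with a copy of $I$ along those two edges. It is then enough to show that the inclusion $W_2\hookrightarrow W_3$, where $W_3$ is $\Delta[2]$ with a copy of $I$ glued along all three edges, can be realized as a finite sequence of pushouts of $I$-augmented horn inclusions: the right lifting property of $X\to\ast$ then yields an extension $W_3\to X$, whose restriction to the copy of $I$ along the third edge exhibits that edge as an $I$-edge. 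To build this sequence, I would attach the cells of the third copy of $I$ one at a time, each as the missing face of a higher simplex whose remaining faces are supplied by $\sigma$ together with the cells of the two copies of $I$ already present; arranging that the higher simplex carries one of those two edges in the right position makes each such step a genuine $I$-augmented horn pushout. A short case analysis on which of the three edges of $\Delta[2]$ is being augmented finishes the argument.

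I expect the real work — the main obstacle — to be exactly this last construction in the case $n=2$: ordering the cell-by-cell attachment of the third copy of $I$ so that every missing face can be filled by an honest $I$-augmented horn pushout rather than merely an almost-$I$-augmented one (which would be circular, since $X$ is only assumed injective against the honest ones). This is where the hypotheses $I\neq\Delta[1]$ and ``two $0$-simplices'' should be used, and it is carried out in the spirit of Lemmas \ref{lem:allalmostpushout} and \ref{lem:JandKhornpushouts}.
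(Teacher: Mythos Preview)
Your reduction to the $n=2$ case is correct and matches the paper's: both argue that it suffices to prove a simplicial $2$-out-of-$3$ property for $I$-edges in $X$, namely that if two edges of a $2$-simplex $\sigma\colon\Delta[2]\to X$ are $I$-edges then so is the third.

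The gap is in your treatment of that $n=2$ case. The inclusion $W_2\hookrightarrow W_3$ cannot be a composite of pushouts of $I$-augmented horn inclusions: each such pushout adds a free $n$-simplex together with its missing face, and $W_3\smallsetminus W_2$ contains no simplices of that shape at all---it is just another copy of $I$ glued along an edge, with no higher cells linking it to $\sigma$ or to the other two copies of $I$. Your description (``attach the cells of the third copy of $I$ one at a time, each as the missing face of a higher simplex whose remaining faces are supplied by $\sigma$ together with the cells of the two copies of $I$'') implicitly enlarges the target well beyond $W_3$, and even then gives no mechanism for ensuring that the new faces assemble into an honest copy of $I$ rather than merely a collection of free simplices. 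Invoking the spirit of Lemmas~\ref{lem:allalmostpushout} and~\ref{lem:JandKhornpushouts} does not help: the former produces only almost-$I$-augmented horn pushouts (which, as you note, is circular), and the latter is specific to $J$ and $K$, whereas the proposition concerns an arbitrary $I$ with two $0$-simplices.

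The paper's argument for $n=2$ is quite different and uses the pointwise cylinder. Since $I$ has exactly two $0$-simplices, the inclusion $\Delta[1]\hookrightarrow I$ is bijective on $0$-simplices, so Corollary~\ref{cor:bijon0simpIaug} shows that
\[
g_\varepsilon\colon(\{\varepsilon\}\odot I)\cup(I\odot\Delta[1])\hooklongrightarrow I\odot I
\]
is a composite of pushouts of $I$-augmented horn inclusions; hence $X\to\ast$ lifts against $g_\varepsilon$. One then chooses a surjection $f$ from the domain of $g_\varepsilon$ onto $\mcW^I$ (your $W_2$) sending the edge $\{\varepsilon'\}\times\Delta[1]$ to the edge of $\mcW^I$ lacking a copy of $I$, and pushes $g_\varepsilon$ out along $f$ to obtain an inclusion $\mcW^I\hookrightarrow P$ in which that edge has become an $I$-edge. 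Lifting the map $\mcW^I\to X$ along this inclusion finishes the proof. This is where the ``two $0$-simplices'' hypothesis is actually used.
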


\begin{proof}
The result follows if we can show that if an edge $\Delta[1]\to X$ factors through some $\mcT^I$ then it factors through a $\mcU^I$ of strictly smaller size. Since there is at least one 2-simplex of every $I$-augmented unordered triangulation where two of the edges are $I$-edges, it suffices to show that if two edges of a 2-simplex in $X$ are $I$-edges, then so is the third edge.

Since $\Delta[1]\hookrightarrow I$ is bijective on 0-simplices, we know by Corollary \ref{cor:bijon0simpIaug} that the maps $g_{\varepsilon}\colon(\{\varepsilon\}\odot I)\cup (I\odot \Delta[1])\hookrightarrow I\odot I$ can be witnessed as sequences of pushouts of $I$-augmented horn inclusions for $\varepsilon=0,1$, so $X\to \ast$ has the right lifting property with respect to $g_{\varepsilon}$. Let $\mcW^I$ be $\Delta[2]$ with a copy of $I$ glued along two edges. Then we can choose $\varepsilon\neq \varepsilon'$ and a surjective map $f\colon(\{\varepsilon\}\odot I)\cup (I\odot \Delta[1])\to \mcW^I$ that sends the $\{\varepsilon'\}\times \Delta[1]$ edge to the edge $e$ of $\mcW^I$ that is not an $I$-edge. Let $g'\colon \mcW^I\to P$ be the pushout of $g$ along $f$, so that $X\to \ast$ also has the right lifting property with respect to $g'$. Note that the edge $e$ becomes an $I$-edge in $P$, so we have shown that every $\mcW^I$-edge of $X$ is also an $I$-edge because we can extend every $\mcW^I\to X$ along $g'\colon \mcW^I\hookrightarrow P$.
\end{proof}

\begin{cor}\label{cor:almostIaugbecomesIaug}
Let $I=J$ or $K$. A simplicial set $X$ is fibrant in the model structure from Proposition \ref{prop:JandKhtpyMS} if and only if $X\to \ast$ has the right lifting property with respect to all $I$-augmented horn inclusions.
\end{cor}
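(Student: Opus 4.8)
The plan is to prove the two implications separately; one is immediate and all the content sits in the other. Since $I$ itself is an $I$-augmented unordered triangulation (of size $1$), every $I$-augmented horn inclusion is the special case $\mcT^I=I$ of an almost-$I$-augmented horn inclusion. Hence if $X\to\ast$ has the right lifting property against all almost-$I$-augmented horn inclusions it certainly has it against all $I$-augmented horn inclusions, which, combined with Proposition \ref{prop:JandKhtpyMS}, disposes of the ``only if'' direction. For the converse, assume $X\to\ast$ has the right lifting property against every $I$-augmented horn inclusion; we must upgrade this to lifts against every almost-$I$-augmented horn inclusion.

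The crucial input is Proposition \ref{prop:almostIbecomesI}: since $I=J$ or $K$ has exactly two $0$-simplices and is not $\Delta[1]$, that proposition applies to $X$ and tells us that every almost-$I$-edge of $X$ is already an $I$-edge. Now fix an almost-$I$-augmented horn inclusion $\Lambda^j[n]^{\mcT^I}_{i\to i+1}\hookrightarrow\Delta[n]^{\mcT^I}_{i\to i+1}$ (so $j=i$ or $i+1$) together with a map $u\colon\Lambda^j[n]^{\mcT^I}_{i\to i+1}\to X$. Inside $\Lambda^j[n]^{\mcT^I}_{i\to i+1}$ the $i\to(i+1)$ edge of $\Lambda^j[n]$ is identified with the distinguished edge $\Delta[1]\hookrightarrow\mcT^I$, so the edge $e$ of $X$ to which $u$ sends it factors through $\Delta[1]\hookrightarrow\mcT^I$ (via the restriction $\mcT^I\to X$ of $u$) and is therefore a $\mcT^I$-edge, hence an almost-$I$-edge, hence an $I$-edge; fix a map $\tilde v\colon I\to X$ whose restriction along $\Delta[1]\hookrightarrow I$ is $e$.

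With this in hand I would run the following three-step gluing. First, the restriction $\Lambda^j[n]\to X$ of $u$ and the map $\tilde v\colon I\to X$ agree on the $i\to(i+1)$ edge (both equal $e$), so by the pushout defining $\Lambda^j[n]^I_{i\to i+1}$ they determine a map $\Lambda^j[n]^I_{i\to i+1}\to X$. Second, since this is the domain of an $I$-augmented horn inclusion, our hypothesis produces an extension $w\colon\Delta[n]^I_{i\to i+1}\to X$; its restriction $w_0\colon\Delta[n]\to X$ extends the horn part of $u$ and still sends the $i\to(i+1)$ edge to $e$. Third, $w_0$ and the restriction $\mcT^I\to X$ of $u$ again agree on $\Delta[1]$ (both give $e$), so by the pushout defining $\Delta[n]^{\mcT^I}_{i\to i+1}$ they glue to a map $\Delta[n]^{\mcT^I}_{i\to i+1}\to X$; unwinding the three pushouts shows it restricts to $u$ on $\Lambda^j[n]^{\mcT^I}_{i\to i+1}$, which is the required lift.

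There is no genuine obstacle here beyond Proposition \ref{prop:almostIbecomesI}, which does the real work; the remaining argument is formal. The only thing that needs care is the bookkeeping: at each gluing one must confirm that the two maps being combined agree on the copy of $\Delta[1]$ lying over the $i\to(i+1)$ edge, and this holds throughout simply because that edge is sent to the single edge $e$ at every stage. Consequently the corollary follows formally from Propositions \ref{prop:JandKhtpyMS} and \ref{prop:almostIbecomesI}.
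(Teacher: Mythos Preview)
Your proof is correct and follows essentially the same approach as the paper: both directions use Proposition~\ref{prop:almostIbecomesI} as the key input, with the forward implication immediate since $I$-augmented horn inclusions are a special case of almost-$I$-augmented horn inclusions. The paper's version of the converse is terser (``the almost-$I$ edge of the horn can be turned into an $I$-edge, so we get a lift''), whereas you have carefully unpacked the gluing argument that this sentence compresses; your three-step pushout bookkeeping is exactly what is needed to make that sentence rigorous.
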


\begin{proof}
The forward implication is immediate because every $I$-augmented horn inclusion is also an almost-$I$-augmented horn inclusion. For the other implication, let us assume $X\to \ast$ has the right lifting property with respect to all $I$-augmented horn inclusions. Given an almost-$I$-augmented horn in $X$, by Proposition \ref{prop:almostIbecomesI} the almost-$I$ edge of the horn can be turned into an $I$-edge, so we get a lift.
\end{proof}

\begin{defn}
We call the model structures from Proposition \ref{prop:JandKhtpyMS} the \emph{minimal homotopically-behaved model structure} and \emph{$K$-minimal homotopically-behaved model structure}.
\end{defn}

The word ``minimal'' in the above definition is justified by the following remark.

\begin{rmk}\label{rmk:minismin}
By Corollary \ref{cor:characterizedhtpicalMS}, a Cisinski model structure is homotopically-behaved if and only if the $J$-augmented horn inclusions are weak equivalences in that model structure. The minimal homotopically-behaved model structure therefore has the smallest class of weak equivalences possible for a homotopically-behaved model structure. Similarly, if the map $K\to \ast$ is a weak equivalence in a Cisinski model structure, then that model structure is homotopically-behaved if and only if all $K$-augmented horn inclusions are weak equivalences as well, so the $K$-minimal homotopically-behaved model structure has smallest class of weak equivalences possible for a homotopically-behaved model structure where $K\to \ast$ is a weak equivalence.
\end{rmk}

\begin{cor}\label{cor:KbehavedlocalizestoJoyal}
The Joyal model structure is a localization of the $K$-minimal \break homotopically-behaved model structure.
\end{cor}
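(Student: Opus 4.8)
The plan is to reduce the statement, via Remark~\ref{rmk:MSdeterminedbyfibobj}, to a lifting property of quasi-categories. Both the Joyal model structure and the $K$-minimal homotopically-behaved model structure are Cisinski model structures, hence have the same cofibrations, so it is enough to show that every Joyal-fibrant object is fibrant in the $K$-minimal homotopically-behaved model structure: Remark~\ref{rmk:MSdeterminedbyfibobj} then gives that the Joyal weak equivalences contain those of the $K$-minimal model structure, exhibiting the former as a localization. By Corollary~\ref{cor:almostIaugbecomesIaug} (with $I=K$), a simplicial set is fibrant in the $K$-minimal homotopically-behaved model structure precisely when it has the right lifting property with respect to all $K$-augmented horn inclusions, so the whole corollary comes down to checking that every quasi-category has lifts against $K$-augmented horn inclusions.

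So fix a quasi-category $X$ and a $K$-augmented horn inclusion $\Lambda^{j}[n]^{K}_{i\to i+1}\hookrightarrow \Delta[n]^{K}_{i\to i+1}$, where $n\ge 2$, $0\le i\le n-1$, and $j=i$ or $j=i+1$. Unwinding the pushout definitions, $\Delta[n]^{K}_{i\to i+1}=\Delta[n]\sqcup_{\Delta[1]}K$ and $\Lambda^{j}[n]^{K}_{i\to i+1}=\Lambda^{j}[n]\sqcup_{\Delta[1]}K$, both glued along the $i\to(i+1)$ edge, which lies in $\Lambda^{j}[n]$ since $n\ge 2$. Hence a map $\Lambda^{j}[n]^{K}_{i\to i+1}\to X$ is a horn $u\colon\Lambda^{j}[n]\to X$ together with a map $K\to X$ agreeing with $u$ along the $i\to(i+1)$ edge, and by a pushout-pasting argument a lift $\Delta[n]^{K}_{i\to i+1}\to X$ is the same as an extension of $u$ to an $n$-simplex $\Delta[n]\to X$ (any such extension automatically still agrees with the given map $K\to X$). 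It therefore suffices to extend the horn $u$.

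This is where the invertibility built into $K$ is used. Since $K$ is the edge $\kappa$ with a left inverse and a right inverse glued in (Example~\ref{ex:K}), the class of $\kappa$ has both a left and a right inverse in $h(K)$, hence is an isomorphism, so $\kappa$ is a categorical pre-isomorphism in $K$; because the $i\to(i+1)$ edge of $u$ factors as $\Delta[1]\hookrightarrow K\to X$ with first map $\kappa$, and $h$ carries isomorphisms to isomorphisms, this edge of $X$ is a categorical pre-isomorphism as well. Now split on whether $\Lambda^{j}[n]$ is inner or outer. If it is inner, $u$ extends because $X$ is a quasi-category. It is outer exactly when $j=0$ or $j=n$; under our constraints $j=0$ forces $i=j=0$, with distinguished edge $0\to 1$, and $j=n$ forces $i=n-1$ and $j=i+1$, with distinguished edge $(n-1)\to n$. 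In both outer cases the edge whose invertibility Proposition~\ref{prop:qcatsphorn} requires is exactly the $K$-augmented edge $i\to(i+1)$, which we just saw is a categorical pre-isomorphism, so $u$ extends by Proposition~\ref{prop:qcatsphorn}. In every case $u$ extends, completing the proof.

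The main obstacle is the final case analysis rather than anything deep: one has to match up, for each admissible pair $(i,j)$, the $K$-augmented edge $i\to(i+1)$ with the particular edge that Proposition~\ref{prop:qcatsphorn} asks to be invertible — the $0\to 1$ edge when the horn is $\Lambda^{0}[n]$ and the $(n-1)\to n$ edge when it is $\Lambda^{n}[n]$ — and confirm there are no other outer cases among the allowed $(i,j)$. The supporting pieces (the pushout-pasting identification of the two lifting problems, and the functoriality argument that the distinguished edge of $X$ is a categorical pre-isomorphism) are routine.
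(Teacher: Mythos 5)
Your proposal is correct and follows essentially the same route as the paper: reduce to showing quasi-categories lift $K$-augmented horn inclusions, handle inner horns by the quasi-category condition, and handle the two outer cases by observing that the $K$-augmented edge is a categorical pre-isomorphism so that Proposition \ref{prop:qcatsphorn} applies. Your write-up is simply more explicit about the pushout bookkeeping and the $(i,j)$ case analysis, which the paper leaves implicit.
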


\begin{proof}
Let $X$ be a quasi-category. Given a $K$-augmented horn $\Lambda^j[n]^{K}_{i\to i+1}\to X$, there are two cases: either $0<j<n$, in which case the horn is inner so there is a lift, or the horn is an outer horn with edge $0\to 1$ or $n-1\to n$ factoring through $K$. In the latter case, the edge factoring through $K$ means it is sent to a categorical pre-isomorphism, making the horn a special outer horn, so there is a lift in this case as well.

We have shown that every fibrant object in the Joyal model structure is also fibrant in the $K$-minimal homotopically-behaved model structure, which implies that the $K$-minimal homotopically-behaved model structure is a localization of the Joyal model structure since their cofibrations are the same.
\end{proof}

Furthermore, these model structures are also minimal with respect to the exact cylinders $J\odot-$ and $\kappa\odot-$, as we now explain.

\begin{rmk}\label{rmk:retractargument}
Observe that for all $n\geq 2$, the $I$-augmented horn inclusion $\Lambda^{1}[n]^I_{0\to 1}\hookrightarrow \Delta[n]^I_{0\to 1}$ is a retract of the inclusion
\[
(\{0\}\odot \Delta[n-1])\cup (I\odot \partial\Delta[n-1])\hookrightarrow I\odot \Delta[n-1]\nospace{.}
\]
The same cannot be said of $\Lambda^{j}[n]^I_{0\to 1}\hookrightarrow \Delta[n]^I_{j-1\to j}$ for $1<j\leq n$. However, it is instead true that it is a retract of a closely related inclusion. To illustrate, recall the notation from Lemma \ref{lem:an1satisfied}, along with the diagram
\[
\begin{tikzcd}[row sep=0em]
        & Q_0 \arrow[ld, red]                       \\
P_0     &                                              \\
        & Q^0_1 \arrow[ld, red] \arrow[lu] \\
P_1     &                                              \\
\vdots  & \vdots \arrow[ld, red] \arrow[lu]         \\
P_{n-1} &                                              \\
        & \ Q^{n-1}_n \arrow[ld, red, shorten <=-2mm] \arrow[lu, shorten <=-2mm]      \\
P_n     &                             \quad\nospace{,}                              
\end{tikzcd}
\]
showing the simplices of $\Delta[1]\odot \Delta[n]$ that are not in $(\{0\}\odot \Delta[n])\cup (I\odot \partial\Delta[n])$. Let us denote by $A_j$ the union of $(\{0\}\odot \Delta[n])\cup (I\odot \partial\Delta[n])$ with the simplices $P_{n-j}, P_{n-j+1},\ldots,P_{n}$. Then $\Lambda^{j}[n]^I_{0\to 1}\hookrightarrow \Delta[n]^I_{j-1\to j}$ is a retract of $A_j\hookrightarrow I\odot \Delta[n-1]$.
\end{rmk}

The above remark sets us up to prove the following proposition inductively.

\begin{prop}\label{prop:retractargument}
Given a simplicial set $I$ and $\Delta[1]\hookrightarrow I$ and $n\geq 2$, if the map
\[
(\{0\}\odot \Delta[n-1])\cup (I\odot \partial\Delta[n-1])\hookrightarrow I\odot \Delta[n-1]
\]
is a weak equivalence, then so is every $I$-augmented horn inclusion of the form
\[
\Lambda^{j}[n]^I_{0\to 1}\hookrightarrow \Delta[n]^I_{j-1\to j}.
\]
Similarly, if the map
\[
(\{1\}\odot \Delta[n-1])\cup (I\odot \partial\Delta[n-1])\hookrightarrow I\odot \Delta[n-1]
\]
is a weak equivalence, then so is every $I$-augmented horn inclusion of the form
\[
\Lambda^{j}[n]^I_{0\to 1}\hookrightarrow \Delta[n]^I_{j\to j+1}.
\]
\end{prop}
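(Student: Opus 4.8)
The plan is to derive both assertions from Remark~\ref{rmk:retractargument} by induction, using only three standard facts about the weak equivalences of a model category: closure under retracts, two-out-of-three, and the fact that a pushout of a monomorphism which is a weak equivalence is again a weak equivalence (trivial cofibrations are stable under pushout; compare Lemma~\ref{lem:leftproper}). I would run the two assertions through a single induction, since---as the prism combinatorics of Lemma~\ref{lem:an1satisfied} reveals---the filtration steps needed to prove one assertion are pushouts of augmented horn inclusions of the shape appearing in the other, so the two families of horns have to be handled simultaneously and in a carefully chosen order.

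First the base cases, $j=1$ in the first assertion and $j=0$ in the second. Here Remark~\ref{rmk:retractargument} does everything: the augmented horn inclusion in question is a retract of $(\{\varepsilon\}\odot\Delta[n-1])\cup(I\odot\partial\Delta[n-1])\hookrightarrow I\odot\Delta[n-1]$ with $\varepsilon=0$ (resp.\ $\varepsilon=1$), which is a weak equivalence by hypothesis, so its retract is too.

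For the inductive step, consider $\Lambda^{j}[n]^I_{0\to1}\hookrightarrow\Delta[n]^I_{j-1\to j}$ with $1<j\le n$; the second assertion is identical after reflecting the cylinder. By Remark~\ref{rmk:retractargument} this map is a retract of an inclusion $A_j\hookrightarrow I\odot\Delta[n-1]$, where $A_j$ is obtained from $A_1:=(\{0\}\odot\Delta[n-1])\cup(I\odot\partial\Delta[n-1])$ by successively attaching the slanted prism simplices $P_{n-1},P_{n-2},\dots$ of $\Delta[1]\times\Delta[n-1]$ together with their free faces, exactly as in the proof of Lemma~\ref{lem:an1satisfied}. By the bookkeeping discussed below, each inclusion $A_k\hookrightarrow A_{k+1}$ is a pushout of an augmented horn inclusion already covered by the inductive hypothesis, hence a trivial cofibration; so $A_1\hookrightarrow A_j$ is a trivial cofibration, in particular a weak equivalence. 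Since $A_1\hookrightarrow I\odot\Delta[n-1]$ is a weak equivalence by hypothesis, two-out-of-three applied to $A_1\hookrightarrow A_j\hookrightarrow I\odot\Delta[n-1]$ yields that $A_j\hookrightarrow I\odot\Delta[n-1]$ is a weak equivalence, and therefore so is its retract $\Lambda^{j}[n]^I_{0\to1}\hookrightarrow\Delta[n]^I_{j-1\to j}$.

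The main obstacle is precisely the bookkeeping just invoked. For each prism simplex $P_\ell$ one has to pin down its unique free face in the current stage $A_k$ and the unique vertical (hence invertible) edge adjacent to the omitted vertex, check that the resulting pushout really is an $I$-augmented horn inclusion of one of the two shapes in the statement, and then verify that it has strictly smaller complexity in the ordering used for the joint induction---so that the two families of horns genuinely feed into each other without circularity. This is the combinatorial heart of the argument; once the ordering is set up, retracts, two-out-of-three, and pushout-stability of trivial cofibrations finish the job mechanically.
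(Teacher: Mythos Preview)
Your overall strategy---induction on $j$ via the retract relationship of Remark~\ref{rmk:retractargument}, combined with two-out-of-three and pushout stability of trivial cofibrations---is exactly the paper's. The paper, however, treats the two assertions by \emph{separate} self-contained inductions: in the proof of the first assertion, each filtration step $A_\ell\hookrightarrow A_{\ell+1}$ is a pushout of an augmented horn inclusion of the \emph{same} shape $\Lambda^{\ell}[n]^I_{\ell-1\to\ell}$ already handled at an earlier stage of that same induction, not of the shape from the second assertion. Concretely, when one attaches the prism simplex $P_{\ell-1}$ along its unique missing face, the vertical $I$-edge sits adjacent to the omitted vertex on the correct side to yield a horn of the first shape; the symmetry you invoke (``reflecting the cylinder'') is precisely what carries this over to the second assertion with its own hypothesis. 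So your claim that the two families must be interleaved in a single joint induction is a misreading of the prism combinatorics.

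This does not make your argument wrong---a joint induction certainly works---but it is an unnecessary complication, and as written it only establishes the weaker statement ``both hypotheses together imply both conclusions'' rather than the two independent implications the proposition asserts. (For the subsequent corollary, where both $\varepsilon=0$ and $\varepsilon=1$ hypotheses hold anyway, this distinction is immaterial.) Relatedly, your base case $j=0$ for the second assertion is off: by the cylinder reflection, the second assertion's base case sits at the opposite end of the range of $j$ from the first assertion's $j=1$.
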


\begin{proof}
We prove the first claim, as the second is similar. We proceed by induction on $j$. As observed in Remark \ref{rmk:retractargument}, for the base case $j=1$, the inclusion
\[
\Lambda^{j}[n]^I_{0\to 1}\hookrightarrow \Delta[n]^I_{j-1\to j}
\]
is a retract of
\[
(\{0\}\odot \Delta[n-1])\cup (I\odot \partial\Delta[n-1])\hookrightarrow I\odot \Delta[n-1],
\]
so is a weak equivalence. Now, assuming $1<j\leq n$ and each
\[
\Lambda^{\ell}[n]^I_{0\to 1}\hookrightarrow \Delta[n]^I_{\ell-1\to \ell}
\]
for $1\leq \ell<j$ is a weak equivalence, then, using the notation from Remark \ref{rmk:retractargument}, the inclusion
\[
(\{0\}\odot \Delta[n-1])\cup (I\odot \partial\Delta[n-1])\hookrightarrow A_j
\]
is a weak equivalence, and so $A_j\hookrightarrow I\odot \Delta[n-1]$ is by the 2-out-of-3 property, and therefore
\[
\Lambda^{j}[n]^I_{0\to 1}\hookrightarrow \Delta[n]^I_{j-1\to j}
\]
is a weak equivalence since it is a retract of $A_j\hookrightarrow I\odot \Delta[n-1]$.
\end{proof}

\begin{cor}
Let $S$ be a set of monomorphisms.
\begin{enumerate}
    \item If $S$ generates an $(I\odot-)$-anodyne class for some $I$, then the corresponding Cisinski model structure $\mathcal{M}$ (whose fibrant objects are simplicial sets with the right lifting property with respect to $S$) is a localization of the minimal homotopically-behaved model structure.
    \item If $S$ generates a $(\kappa\odot-)$-anodyne class, then the corresponding Cisinski model structure $\mathcal{M}$ is a localization of the $K$-minimal homotopically-behaved model structure.
\end{enumerate}
\end{cor}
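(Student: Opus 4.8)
The plan is to reduce both parts to the single claim that the Cisinski model structure $\mathcal{M}$ associated to an $(I\odot-)$-anodyne class is automatically homotopically-behaved, and then to read off the conclusion from the minimality recorded in Remark \ref{rmk:minismin}. For the reduction, note that $\mathcal{M}$ and the (in part (2), $K$-)minimal homotopically-behaved model structure are both Cisinski, hence have the same cofibrations (the monomorphisms), so ``$\mathcal{M}$ is a localization'' of the latter means exactly that every fibrant object of $\mathcal{M}$ is fibrant there (Remark \ref{rmk:MSdeterminedbyfibobj}). In a Cisinski model structure the trivial cofibrations are the monomorphisms that are weak equivalences and an object is fibrant iff its map to $\ast$ has the right lifting property against the trivial cofibrations; therefore it suffices to show that the weak equivalences of $\mathcal{M}$ contain those of the (in part (2), $K$-)minimal homotopically-behaved model structure. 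By Remark \ref{rmk:minismin} this containment follows once $\mathcal{M}$ is homotopically-behaved, together with, for part (2), the fact that $K\to\ast$ is a weak equivalence in $\mathcal{M}$.

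So I would next prove that $\mathcal{M}$ is homotopically-behaved, for an arbitrary $\Delta[1]\hookrightarrow I$ with $\mathcal{A}={}^\lifts(S^\lifts)$ the $(I\odot-)$-anodyne class it generates (this handles part (1) for the given $I$ and part (2) for $(I,\iota)=(K,\kappa)$). By Cisinski's Theorem \ref{thm:cisinskiMS} the members of $\mathcal{A}$ are trivial cofibrations of $\mathcal{M}$, in particular weak equivalences. Since $\mathcal{A}$ satisfies axiom (An1), it satisfies (An1$'$) by Lemma \ref{lem:altaxioms}; its $n=0$ instance is the map $\{\varepsilon\}\hookrightarrow I$, which therefore lies in $\mathcal{A}$ for $\varepsilon=0,1$, and as $\{\varepsilon\}=\ast$ the two-out-of-three property makes $I\to\ast$ a weak equivalence in $\mathcal{M}$. (Applying this with $I=K$ records that $K\to\ast$ is a weak equivalence in $\mathcal{M}$, the extra ingredient needed for part (2).) With $I\to\ast$ a weak equivalence in hand, Corollary \ref{cor:characterizedhtpicalMS} reduces homotopical behavedness of $\mathcal{M}$ to showing that every $I$-augmented horn inclusion is a weak equivalence in $\mathcal{M}$.

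To get that, I would feed the hypotheses of Proposition \ref{prop:retractargument} from (An1$'$): for every $m\geq 1$ and $\varepsilon=0,1$ the map $(\{\varepsilon\}\odot\Delta[m])\cup(I\odot\partial\Delta[m])\hookrightarrow I\odot\Delta[m]$ lies in $\mathcal{A}$ and so is a weak equivalence in $\mathcal{M}$. Applying the first half of Proposition \ref{prop:retractargument} (using the $\varepsilon=0$ maps) shows that every $I$-augmented horn inclusion $\Lambda^{j}[n]^I_{i\to i+1}\hookrightarrow\Delta[n]^I_{i\to i+1}$ with $j=i+1$ is a weak equivalence in $\mathcal{M}$, and the second half (using the $\varepsilon=1$ maps) gives the same for $j=i$; together these account for all $I$-augmented horn inclusions. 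Hence $\mathcal{M}$ is homotopically-behaved by Corollary \ref{cor:characterizedhtpicalMS}, and both parts of the corollary follow from the reduction of the first paragraph.

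The whole argument is bookkeeping on top of results already in place; the one point requiring care is the logical direction in the first paragraph. Joyal's fact (Remark \ref{rmk:MSdeterminedbyfibobj}) is stated as ``containment of fibrant objects implies containment of weak equivalences,'' whereas here one needs the converse implication, which is precisely what the short argument through the class of trivial cofibrations supplies; one should also check that the two halves of Proposition \ref{prop:retractargument}, with the two values of $\varepsilon$, between them cover both cases $j=i$ and $j=i+1$ of the definition of an $I$-augmented horn inclusion.
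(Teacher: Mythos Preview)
Your proof is correct and follows essentially the same approach as the paper: you use (An1$'$) to get $\{\varepsilon\}\hookrightarrow I$ in the anodyne class (hence $I\to\ast$ a weak equivalence by 2-out-of-3), invoke Proposition~\ref{prop:retractargument} with both values of $\varepsilon$ to make all $I$-augmented horn inclusions weak equivalences, and then conclude via Corollary~\ref{cor:characterizedhtpicalMS} and Remark~\ref{rmk:minismin}. Your first paragraph is slightly more explicit than the paper about why ``$\mathcal{M}$ is homotopically-behaved'' implies ``$\mathcal{M}$ is a localization of the minimal homotopically-behaved model structure,'' but this is the same content the paper packages into its appeal to Remark~\ref{rmk:minismin}.
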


\begin{proof}
We first prove (1). The maps $(\{\varepsilon\}\odot \Delta[n-1])\cup (I\odot \partial\Delta[n-1])\hookrightarrow I\odot \Delta[n-1]$ are necessarily in ${}^{\lifts}(S^{\lifts})$ and hence weak equivalences for $\varepsilon=0,1$ and $n\geq 2$ in $\mathcal{M}$, and so by Proposition \ref{prop:retractargument} all of the $I$-augmented horn inclusions are also weak equivalences. Since the inclusion $\{0\}\hookrightarrow I$ is also in ${}^{\lifts}(S^{\lifts})$ and hence a weak equivalence, by the 2-out-of-3 property the map $I\to \ast$ is also a weak equivalence. We may therefore apply Corollary \ref{cor:characterizedhtpicalMS} to see that $\mathcal{M}$ is homotopically-behaved, and so is a localization of the minimal homotopically-behaved model structure by Remark \ref{rmk:minismin}.

To prove (2), we apply (1) with $I=K$ to see that $\mathcal{M}$ is homotopically-behaved. In our proof of (1) we also see that $K\to\ast$ is a weak equivalence in $\mathcal{M}$, so by Remark \ref{rmk:minismin} we can conclude that $\mathcal{M}$ is a localization of the $K$-minimal homotopically-behaved model structure.
\end{proof}

\begin{rmk}\label{rmk:differentMSminKbehaved}
We note that $K$-augmented horns $\Lambda^i[n]^K_{i\to i+1}$ are themselves fibrant in the minimal homotopically-behaved model structure, but not in the $K$-minimal homotopically-behaved model structure, so these model structures are distinct. Furthermore, the map $K\to \ast$ is not a weak equivalence in the minimal homotopically-behaved model structure because otherwise Proposition \ref{prop:generalizedIaugmented} would imply that all $K$-augmented horn inclusions would be as well and so the model structures would be the same.
\end{rmk}

We summarize the results of this section with the following theorem.

\begin{thm}\label{thm:summarythm}
Let $\mathcal{M}_{\operatorname{mhb}}$ be the minimal homotopically-behaved model structure on $\sSet$, and let $\mathcal{M}_{K,\operatorname{mhb}}$ be the $K$-minimal homotopically-behaved model structure on $\sSet$.
\begin{enumerate}
    \item     \begin{enumerate}
        \item Every homotopically-behaved model structure is a localization of $\mathcal{M}_{\operatorname{mhb}}$.
        \item Every Cisinski model structure corresponding to an $(I\odot -)$-anodyne class for some $I$ is a localization of $\mathcal{M}_{\operatorname{mhb}}$.
        \item The fibrant objects in $\mathcal{M}_{\operatorname{mhb}}$ are the simplicial sets with the right lifting property with respect to all $J$-augmented horn inclusions.
        \item The Joyal model structure is a localization of $\mathcal{M}_{\operatorname{mhb}}$.
    \end{enumerate}
    \vspace{3mm}
    \item      \begin{enumerate}
        \item The model structure $\mathcal{M}_{K,\operatorname{mhb}}$ is the localization of $\mathcal{M}_{\operatorname{mhb}}$ with respect to $K\to \ast$.
        \item Every Cisinski model structure corresponding to a $(K\odot -)$-anodyne class is a localization of $\mathcal{M}_{K,\operatorname{mhb}}$.
        \item The fibrant objects in $\mathcal{M}_{K,\operatorname{mhb}}$ are the simplicial sets with the right lifting property with respect to all $K$-augmented horn inclusions.
        \item The Joyal model structure is a localization of $\mathcal{M}_{K,\operatorname{mhb}}$.
    \end{enumerate}
\end{enumerate}
\end{thm}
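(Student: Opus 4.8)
The plan is to read this theorem as a bookkeeping statement: nearly every clause has already been proved in the section, so most of the work is assembling citations, and there is exactly one clause — part (2)(a) — that requires a short genuine argument identifying $\mathcal{M}_{K,\operatorname{mhb}}$ with a left Bousfield localization. First I would dispose of the routine pieces. Parts (1)(c) and (2)(c) follow by combining Proposition \ref{prop:JandKhtpyMS} with Corollary \ref{cor:almostIaugbecomesIaug}, applied with $I=J$ and $I=K$ respectively: a simplicial set is fibrant in $\mathcal{M}_{\operatorname{mhb}}$ (resp.\ $\mathcal{M}_{K,\operatorname{mhb}}$) exactly when it has the right lifting property against all $J$-augmented (resp.\ $K$-augmented) horn inclusions. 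Parts (1)(b) and (2)(b) are literally the two statements of the (unnumbered) corollary preceding Remark \ref{rmk:differentMSminKbehaved}, so nothing further is needed there.

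Next I would handle part (1)(a). Let $\mathcal{M}$ be a homotopically-behaved Cisinski model structure. Its cofibrations are the monomorphisms, matching those of $\mathcal{M}_{\operatorname{mhb}}$, so it suffices to show every $\mathcal{M}$-fibrant object is $\mathcal{M}_{\operatorname{mhb}}$-fibrant. By Corollary \ref{cor:characterizedhtpicalMS} the $J$-augmented horn inclusions are weak equivalences in $\mathcal{M}$, hence trivial cofibrations; so if $X$ is $\mathcal{M}$-fibrant then $X\to\ast$ lifts against all of them, whence $X$ is $\mathcal{M}_{\operatorname{mhb}}$-fibrant by part (1)(c). Running the same observation the other way shows that any localization of $\mathcal{M}_{\operatorname{mhb}}$ is homotopically-behaved (its weak equivalences contain those of $\mathcal{M}_{\operatorname{mhb}}$, among them the $J$-augmented horn inclusions, so Corollary \ref{cor:characterizedhtpicalMS} applies); thus the class of localizations of $\mathcal{M}_{\operatorname{mhb}}$ coincides with the class of homotopically-behaved Cisinski model structures on $\sSet$. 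This identification is what makes the two ``minimality'' characterizations compatible in the next step.

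For part (2)(a), I would first check that $\mathcal{M}_{K,\operatorname{mhb}}$ is itself homotopically-behaved: the maps $\{\varepsilon\}\hookrightarrow K$ are trivial cofibrations by construction, so $K\to\ast$ is a weak equivalence by $2$-out-of-$3$, and all $K$-augmented horn inclusions are trivial cofibrations, so Corollary \ref{cor:characterizedhtpicalMS} applies with $I=K$. Let $\mathcal{N}$ be the left Bousfield localization of $\mathcal{M}_{\operatorname{mhb}}$ at $K\to\ast$, which exists by \cite{Hirschhorn}; it is again Cisinski, with the monomorphisms as cofibrations, and is characterized as the one with the smallest class of weak equivalences among Cisinski model structures whose weak equivalences contain those of $\mathcal{M}_{\operatorname{mhb}}$ and in which $K\to\ast$ is a weak equivalence. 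Since $\mathcal{M}_{K,\operatorname{mhb}}$ is such a model structure — it is a localization of $\mathcal{M}_{\operatorname{mhb}}$ by part (1)(a), and $K\to\ast$ is a weak equivalence in it — its weak equivalences contain those of $\mathcal{N}$. Conversely $\mathcal{N}$ is homotopically-behaved (being a localization of $\mathcal{M}_{\operatorname{mhb}}$) and $K\to\ast$ is a weak equivalence in it, so Remark \ref{rmk:minismin} forces the weak equivalences of $\mathcal{M}_{K,\operatorname{mhb}}$ into those of $\mathcal{N}$. Hence $\mathcal{M}_{K,\operatorname{mhb}}$ and $\mathcal{N}$ have the same cofibrations and the same weak equivalences, so they are the same model structure. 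Finally, part (2)(d) is exactly Corollary \ref{cor:KbehavedlocalizestoJoyal}, and part (1)(d) follows since the Joyal model structure is a localization of $\mathcal{M}_{K,\operatorname{mhb}}$, which is a localization of $\mathcal{M}_{\operatorname{mhb}}$, and ``is a localization of'' is transitive for model structures sharing their cofibrations.

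I expect the only real obstacle to be part (2)(a): matching the concrete, Cisinski-machine construction of $\mathcal{M}_{K,\operatorname{mhb}}$ (via the $(\kappa\odot-)$-anodyne class) with the abstract left Bousfield localization of $\mathcal{M}_{\operatorname{mhb}}$ at $K\to\ast$. The argument above routes around any explicit comparison of generating trivial cofibrations by playing off the two minimality properties — $\mathcal{N}$ is minimal among localizations of $\mathcal{M}_{\operatorname{mhb}}$ inverting $K\to\ast$, and $\mathcal{M}_{K,\operatorname{mhb}}$ is minimal among homotopically-behaved model structures inverting $K\to\ast$ — which agree once one knows (from part (1)(a)) that these two classes of model structures are the same. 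Everything else in the theorem is a direct appeal to results already in hand.
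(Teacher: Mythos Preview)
Your proposal is correct and matches the paper's approach: the theorem is stated in the paper as a summary with no separate proof, and your citations to Proposition~\ref{prop:JandKhtpyMS}, Corollary~\ref{cor:almostIaugbecomesIaug}, the corollary preceding Remark~\ref{rmk:differentMSminKbehaved}, Remark~\ref{rmk:minismin}, and Corollary~\ref{cor:KbehavedlocalizestoJoyal} are exactly the intended ingredients. The only place you add something beyond the paper is part~(2)(a), where you route through the existence of the Bousfield localization $\mathcal{N}$ via \cite{Hirschhorn} and then compare; the paper leaves this implicit in Remark~\ref{rmk:minismin}, and one could equally well argue directly that $\mathcal{M}_{K,\operatorname{mhb}}$ satisfies the universal property of the localization without first naming $\mathcal{N}$, but your version is perfectly sound.
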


We conclude this section with one more useful corollary to this theorem.

\begin{cor}
Given a set of monomorphisms $S=\{A_i\hookrightarrow B_i\}$ of simplicial sets such that the inclusions
\[
(A_i\times \Delta[1])\cup (B_i\times \partial\Delta[1])\hookrightarrow B_i\times \Delta[1]
\]
are in ${ }^{\lifts}(S^{\lifts})$, there exists a homotopically-behaved model structure on $\sSet$ whose fibrant objects are those with lifts against $S$ and all $J$-augmented horn inclusions.
\end{cor}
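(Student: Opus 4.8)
The plan is to invoke Cisinski's Theorem \ref{thm:cisinskiMS} directly, using the pointwise cylinder $J\odot-$ and the set of monomorphisms
\[
\Lambda \;=\; \{\text{almost-}J\text{-augmented horn inclusions}\}\ \cup\ \{\,\{0\}\hookrightarrow J\,\}\ \cup\ S,
\]
which is a set by Remark \ref{rmk:countability} together with the hypothesis that $S$ is a set. First I would identify the fibrant objects, i.e.\ the $X$ with $\Lambda\lifts(X\to\ast)$: the map $\{0\}\hookrightarrow J$ imposes no condition since $(\ast\hookrightarrow A)\lifts(Y\to\ast)$ always, and by Proposition \ref{prop:almostIbecomesI} (exactly as in Corollary \ref{cor:almostIaugbecomesIaug}) having the right lifting property against every almost-$J$-augmented horn inclusion is equivalent to having it against every $J$-augmented horn inclusion; hence the fibrant objects are precisely the simplicial sets with lifts against $S$ and all $J$-augmented horn inclusions, as required. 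Granting that ${}^{\lifts}(\Lambda^{\lifts})$ is a $(J\odot-)$-anodyne class, homotopical-behavedness is then automatic: $\Lambda$ contains all $J$-augmented horn inclusions (they are in particular almost-$J$-augmented), so these are trivial cofibrations, hence weak equivalences, and since $J\to\ast$ is a weak equivalence in every Cisinski model structure, Corollary \ref{cor:characterizedhtpicalMS} applies.

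So the real content is to check that $\Lambda$ generates a $(J\odot-)$-anodyne class, i.e.\ axioms (An1$'$) and (An2$'$) of Lemma \ref{lem:altaxioms}. Axiom (An1$'$), and axiom (An2$'$) for the maps of $\Lambda$ lying in $\{\text{almost-}J\text{-augmented horn inclusions}\}\cup\{\{0\}\hookrightarrow J\}$, come for free: this subset of $\Lambda$ already generates a $(J\odot-)$-anodyne class by Proposition \ref{prop:JandKanodyne}(1), and passing to the larger generating set only enlarges the generated class, so the relevant maps stay in ${}^{\lifts}(\Lambda^{\lifts})$. The one genuinely new thing to verify is axiom (An2$'$) for each $A_i\hookrightarrow B_i$ in $S$, namely that
\[
(J\odot A_i)\cup(\partial J\odot B_i)\ \longhookrightarrow\ J\odot B_i
\]
lies in ${}^{\lifts}(\Lambda^{\lifts})$. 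This is where the hypothesis on $S$ enters. Unwinding the pushout defining $J\odot(-)$, one sees that $(J\odot A_i)\cup(\partial J\odot B_i)$ equals $(\Delta[1]\times A_i)\cup(\partial\Delta[1]\times B_i)$ with a copy of $J$ adjoined along the vertical edge over each $0$-simplex of $A_i$, so the displayed inclusion factors as a pushout of
\[
(\Delta[1]\times A_i)\cup(\partial\Delta[1]\times B_i)\ \longhookrightarrow\ \Delta[1]\times B_i
\]
— which, up to transposing the two factors, is exactly the map assumed to lie in ${}^{\lifts}(S^{\lifts})\subseteq{}^{\lifts}(\Lambda^{\lifts})$ — followed by the inclusion that glues in the remaining copies of $J$, one along the vertical edge over each $0$-simplex of $B_i$ not coming from $A_i$. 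When $A_i\hookrightarrow B_i$ is bijective on $0$-simplices this second inclusion is an identity and nothing more is needed; otherwise those extra copies of $J$ must be adjoined using the combinatorial bookkeeping of Lemmas \ref{lem:an1satisfied} and \ref{lem:an2fortherest} and Corollary \ref{cor:bijon0simpIaug}.

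The hard part will be precisely this last verification of (An2$'$) for the maps of $S$: the interaction between the pointwise cylinder $J\odot-$, whose whole purpose is to glue in a $J$ over every $0$-simplex, and the $0$-simplices freshly introduced by $A_i\hookrightarrow B_i$ is delicate, since $\Delta[1]\hookrightarrow J$ on its own is not anodyne, so one cannot simply glue the new copies of $J$ in blindly. Everything else is formal: the identification of the fibrant objects from Corollary \ref{cor:almostIaugbecomesIaug}, the inheritance of (An1$'$) and (An2$'$) for the augmented-horn part of $\Lambda$ from the already-constructed minimal homotopically-behaved model structure via Proposition \ref{prop:JandKanodyne}(1), and the homotopical-behavedness from Corollary \ref{cor:characterizedhtpicalMS}. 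With (An1$'$) and (An2$'$) in hand, Theorem \ref{thm:cisinskiMS} produces the model structure, and Remark \ref{rmk:MSdeterminedbyfibobj} guarantees it is the unique one with these cofibrations and fibrant objects.
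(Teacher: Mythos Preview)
Your strategy is exactly the paper's: take $\Lambda$ to be the almost-$J$-augmented horn inclusions together with $\{0\}\hookrightarrow J$ and $S$, inherit (An1$'$) and most of (An2$'$) from Proposition~\ref{prop:JandKanodyne}(1), reduce to checking (An2$'$) for the maps in $S$, and identify the fibrant objects via Corollary~\ref{cor:almostIaugbecomesIaug}. The paper dispatches the remaining case of (An2$'$) in one sentence, asserting that $(J\odot A_i)\cup(\partial J\odot B_i)\hookrightarrow J\odot B_i$ is a pushout of the map $(A_i\times\Delta[1])\cup(B_i\times\partial\Delta[1])\hookrightarrow B_i\times\Delta[1]$ assumed to lie in ${}^{\lifts}(S^{\lifts})$. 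You go further than the paper by noticing that this pushout identification is only literally correct when $A_i\hookrightarrow B_i$ is bijective on $0$-simplices: otherwise the pushout yields $(\Delta[1]\times B_i)\cup(J\times\sk_0 A_i)$, which is strictly smaller than $J\odot B_i=(\Delta[1]\times B_i)\cup(J\times\sk_0 B_i)$, and one must still adjoin a copy of $J$ along $\Delta[1]\times\{b\}$ for each new $0$-simplex $b$.

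However, your proposed remedy does not close this gap. Lemmas~\ref{lem:an1satisfied} and~\ref{lem:an2fortherest} and Corollary~\ref{cor:bijon0simpIaug} produce sequences of $I$-augmented horn pushouts; they do not help you realize $\Delta[1]\hookrightarrow J$ as a composite of maps in ${}^{\lifts}(\Lambda^{\lifts})$. The mechanism that \emph{does} do this---Corollary~\ref{cor:allalmostpushout} combined with Lemma~\ref{lem:JandKhornpushouts}---requires the edge being glued onto to already be an almost-$J$-edge, and $\Delta[1]\times\{b\}$ over a freshly introduced $0$-simplex need not be one (take $b$ isolated in $B_i$). Indeed $\Delta[1]\hookrightarrow J$ is not even a Joyal weak equivalence, so it cannot lie in ${}^{\lifts}(\Lambda^{\lifts})$ unless $S$ itself forces it. Your honest flag that this is ``the hard part'' is therefore well placed: as written, both your argument and the paper's one-line proof are complete only under the additional assumption that each $A_i\hookrightarrow B_i$ is bijective on $0$-simplices---which does cover the intended applications (inner horns, spine inclusions, the $2$-Segal maps), but not the corollary as stated.
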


\begin{proof}
We claim that the set $S$ together with $\{0\}\hookrightarrow J$ and the set of almost-$J$-augmented horn inclusions generates a $(J\otimes -)$-anodyne class. Because we know that $\{0\}\hookrightarrow J$ together with the $J$-augmented horn inclusions generate such a class, it suffices to check that $S$ satisfies axiom (An2$'$). However, the maps that we must show are in ${ }^{\lifts}(S^{\lifts})$ are pushouts of the maps we have assumed are in ${ }^{\lifts}(S^{\lifts})$, so we are done.
\end{proof}

\section{A model structure for special horn inclusions}\label{sec:special}

Intuitively, considering $I$-edges to be ``invertible'' implies we want $I$-augmented horn inclusions to be weak equivalences. So far, the invertibility of $I$-edges has come from $I\to \ast$ being a weak equivalence. In particular, since every $\mcT^K\to \ast$ is a weak equivalence in the Joyal model structure, the almost-$K$-augmented horn inclusions are also Joyal weak equivalences. However, the Joyal model structure comes with its own notion of invertible edges, the categorical pre-isomorphisms. The following example demonstrates that, in an arbitrary simplicial set, a categorical pre-isomorphism need not be an almost-$K$-edge.

\begin{ex}\label{ex:badT}
Let $T$ be the simplicial set depicted by
\[
\begin{tikzcd}
z \arrow[d] & y \arrow[l]                                                 & y \arrow[l, dotted] \arrow[d] \\
x           & x \arrow[l, dotted] \arrow[u, "e_T" description] \arrow[lu] & w \arrow[l] \arrow[lu]       \nospace{,}
\end{tikzcd}
\]
with $e_T\colon\Delta[1]\hookrightarrow T$ the vertical edge in the middle. We have a non-degenerate 2-simplex for each of the four triangles in the picture. The dotted arrows indicate degenerate edges. If $T$ were Joyal equivalent to $\Delta[0]$, then every edge of $T$ would be a categorical pre-isomorphism, but in fact $e_T$ is the only non-degenerate categorical pre-isomorphism in $T$ as the functor $T\to \Set$
\[
\begin{tikzcd}
{\{a,c\}} \arrow[d] & \{a'\} \arrow[l, "a"']                             & \{a'\} \arrow[l, dotted] \arrow[d] \\
\{a\}               & \{a\} \arrow[l, dotted] \arrow[u] \arrow[lu, "a"'] & {\{a,c\}}\nospace{,} \arrow[l] \arrow[lu]    
\end{tikzcd}
\]
which sends every other non-degenerate edge of $T$ to a non-isomorphism in $\Set$, demonstrates.

Furthermore, there is no simplicial set $T'$ that is Joyal equivalent to $\Delta[0]$ such that the inclusion $e_T\colon \Delta[1]\hookrightarrow T$ from the above example factors through $e_{T'}\colon\Delta[1]\hookrightarrow T'$, because the sequence of edges in $T'$ that provide a left inverse to $e_{T'}$ are all categorical pre-isomorphisms, but there is no directed sequence of categorical pre-isomorphisms in $T$ from $y$ to $x$. Therefore, while all $T$-edges are necessarily categorical pre-isomorphisms, this example shows that they need not be almost-$K$-edges.
\end{ex}

The goal of this section is to address this disparity. We identify a set of inclusions $\mcE=\{\Delta[1]\hookrightarrow T\}$ such that an edge in an arbitrary simplicial set is a categorical equivalence if and only if it is a $T$-edge for some $\Delta[1]\hookrightarrow T$ in $\mcE$. We then define the set of \emph{special horn inclusions} to be the set of $T$-augmented horn inclusions for all $\Delta[1]\hookrightarrow T$ in $\mcE$. There turns out to be an intermediate model structure between the $K$-minimal homotopically-behaved model structure and the Joyal model structure where the fibrant objects are precisely the simplicial sets with lifts of special horn inclusions.

We begin by establishing notation and terminology. This first definition is standard.

\begin{defn}
For $n\geq 1$, let $Sp[n]$ denote the \emph{spine} of $\Delta[n]$, the union of the edges $i\to (i+1)$ in $\Delta[n]$ ranging over $0\leq i\leq n-1$.
\end{defn}

For the purposes of this section, we introduce some new notions in the following definitions.

\begin{defn}
For $n\geq 1$ and $1\leq i\leq n$, let $C_i[n,n+1]$ denote the union in $\Delta[n+1]$ of $Sp[n+1]$ with the 2-simplex $(i-1)\to i\to (i+1)$. Call $C_i[n,n+1]$ a \emph{composition tile}.
\end{defn}

\begin{rmk}
There are two inclusions of spines into the composition tile $C_i[n,n+1]$ that preserve the initial and final vertex, the inclusion $Sp[n+1]\hookrightarrow C_i[n,n+1]$ that hits every vertex of $C_i[n,n+1]$ and the inclusion $Sp[n]\hookrightarrow C_i[n,n+1]$ that avoids the $i$th vertex. These inclusions are depicted in Figure \ref{fig:comptile}.
\begin{figure}[h]
\caption{}
\label{fig:comptile}
\vspace{3mm}
\begin{tikzcd}[row sep=small, column sep=small]
{Sp[n]}                                    &  & {} \arrow[r, "{C_i[n,n+1]}", phantom]     & {}           &  & {Sp[n+1]}                                  \\
n                                          &  & n+1 \arrow[ll, no head, dotted]           &              &  & n+1 \arrow[lll, no head, dotted]           \\
\vdots \arrow[u]                           &  & \vdots \arrow[u]                          &              &  & \vdots \arrow[u]                           \\
i \arrow[u]                                &  & i+1 \arrow[u] \arrow[ll, no head, dotted] &              &  & i+1 \arrow[u] \arrow[lll, no head, dotted] \\
                                           &  &                                           & i \arrow[lu] &  & i \arrow[u] \arrow[ll, no head, dotted]    \\
i-1 \arrow[uu] \arrow[rr, no head, dotted] &  & i-1 \arrow[ru] \arrow[uu]                 &              &  & i-1 \arrow[u] \arrow[lll, no head, dotted] \\
\vdots \arrow[u]                           &  & \vdots \arrow[u]                          &              &  & \vdots \arrow[u]                           \\
0 \arrow[u] \arrow[rr, no head, dotted]    &  & 0 \arrow[u] \arrow[rrr, no head, dotted]  &              &  & 0 \arrow[u]                               
\end{tikzcd}
\end{figure}
\end{rmk}

\begin{defn}
Call a simplicial set a \emph{composition tiling} if it is a colimit of a diagram of the form
\[
\adjustbox{scale=0.7}{
\begin{tikzcd}
{C_i[n,n+1]} &                                            & {C_{i'}[n',n'+1]} &                                             & \ldots &                                                    & {C_{i^{(k)}}[n^{(k)},n^{(k)}+1]} \\
             & {Sp[m]} \arrow[lu, hook'] \arrow[ru, hook] &                   & {Sp[m']} \arrow[lu, hook'] \arrow[ru, hook] &        & {Sp[m^{(k-1)}]} \arrow[lu, hook'] \arrow[ru, hook] &                                 
\end{tikzcd}
}
\]
built out of the inclusions from above. (For such a diagram to make sense, we must have $n^{(j)}=n^{(j+1)}\pm 1$ and $m^{(j)}=\max (n^{(j)},n^{(j+1)})$ for all $0\leq j\leq k-1$.)

A composition tiling $C$ comes with two important inclusions of spines, coming from the unused inclusions of the composition tiles on the left and right in the diagram above. These spines must start at the same vertex and end at the same vertex, and their union is precisely the outer edges of the composition tiling. We view a composition tiling as linking these two spines. For our purposes, those two spines are the crucial data to keep track of in a composition tiling, so we use $C^{r,s}$ to denote a composition tiling linking a length $r$ spine to a length $s$ spine.
\end{defn}

\begin{ex}\label{ex:compositiontiling}
We visualize the components of the diagram
\[
\adjustbox{scale=0.7}{
\begin{tikzcd}
{C_3[3,4]} &                                            & {C_2[3,4]} &                                            & {C_1[3,4]} &                                            & {C_2[2,3]} &                                            & {C_1[1,2]} \\
           & {\color{red}{Sp[3]}} \arrow[lu, hook'] \arrow[ru, hook] &            & {\color{red}{Sp[4]}} \arrow[lu, hook'] \arrow[ru, hook] &            & {\color{red}{Sp[3]}} \arrow[ru, hook] \arrow[lu, hook'] &            & {\color{red}{Sp[2]}} \arrow[ru, hook] \arrow[lu, hook'] &           
\end{tikzcd}
}
\]
along with the leftmost and rightmost spines as shown in Figure \ref{fig:ungluedcomptiling},
\begin{figure}[h]
\caption{}
\label{fig:ungluedcomptiling}
\vspace{3mm}
\begin{tikzcd}[column sep=small]
{} \arrow[rr, no head, dotted]                    &               & {} \arrow[r, no head, dotted]                       & {} \arrow[r, no head, dotted]                    & {} \arrow[rr, no head, dotted]                       &                                          & {} \arrow[rr, no head, dotted]                    &               & {} \arrow[r, no head, dotted]                       & {} \arrow[rr, no head, dotted]                    &                & {} \arrow[r, no head, dotted]                        & {} \arrow[rr, no head, dotted]                    &                 & {} \arrow[r, no head, dotted]                           & {}                       \\
{} \arrow[u, red] \arrow[r, no head, dotted]   & {} \arrow[ru] &                                                     &                                                  &                                                      &                                          &                                                   &               &                                                     &                                                   &                &                                                      &                                                   &                 &                                                         &                          \\
{} \arrow[u, red] \arrow[rr, no head, dotted]  &               & {} \arrow[lu] \arrow[uu] \arrow[r, no head, dotted] & {} \arrow[uu, red] \arrow[r, no head, dotted] & {} \arrow[uu] \arrow[rr, no head, dotted]            &                                          & {} \arrow[uu, red] \arrow[rr, no head, dotted] &               & {} \arrow[r, no head, dotted] \arrow[uu]            & {} \arrow[uu, red] \arrow[r, no head, dotted]  & {} \arrow[ruu] &                                                      &                                                   &                 &                                                         &                          \\
                                                  &               &                                                     &                                                  &                                                      & {} \arrow[lu] \arrow[r, no head, dotted] & {} \arrow[u, red] \arrow[rr, no head, dotted]  &               & {} \arrow[r, no head, dotted] \arrow[u]             & {} \arrow[u, red] \arrow[rr, no head, dotted]  &                & {} \arrow[lu] \arrow[uuu] \arrow[r, no head, dotted] & {} \arrow[uuu, red] \arrow[r, no head, dotted] & {} \arrow[ruuu] &                                                         &                          \\
{} \arrow[uu, red] \arrow[rr, no head, dotted] &               & {} \arrow[r, no head, dotted] \arrow[uu]            & {} \arrow[uu, red] \arrow[r, no head, dotted] & {} \arrow[uu] \arrow[ru] \arrow[rr, no head, dotted] &                                          & {} \arrow[u, red] \arrow[r, no head, dotted]   & {} \arrow[ru] &                                                     &                                                   &                &                                                      &                                                   &                 &                                                         &                          \\
{} \arrow[u, red] \arrow[rr, no head, dotted]  &               & {} \arrow[r, no head, dotted] \arrow[u]             & {} \arrow[u, red] \arrow[r, no head, dotted]  & {} \arrow[u] \arrow[rr, no head, dotted]             &                                          & {} \arrow[u, red] \arrow[rr, no head, dotted]  &               & {} \arrow[lu] \arrow[uu] \arrow[r, no head, dotted] & {} \arrow[uu, red] \arrow[rr, no head, dotted] &                & {} \arrow[r, no head, dotted] \arrow[uu]             & {} \arrow[uu, red] \arrow[rr, no head, dotted] &                 & {} \arrow[luu] \arrow[uuuuu] \arrow[r, no head, dotted] & {} \arrow[uuuuu, red]\nospace{,}
\end{tikzcd}
\end{figure}
and then taking the colimit we get a composition tiling $C^{4,1}$
\[
\begin{tikzcd}
                                     & {} \arrow[rr, green]            &  & {}                                                     \\
                                     &                                  &  &                                                        \\
{} \arrow[ruu, green] \arrow[rrruu] &                                  &  &                                                        \\
                                     & {} \arrow[lu] \arrow[rruuu]      &  &                                                        \\
                                     & {} \arrow[luu, green] \arrow[u] &  & {} \arrow[ll, green] \arrow[llu] \arrow[uuuu, blue]\nospace{\ .}
\end{tikzcd}
\]
The green edges show the spine $Sp[4]\hookrightarrow C^{4,1}$ and the blue edge shows $Sp[1]\hookrightarrow C^{4,1}$.
\end{ex}

\begin{ex}
An unordered triangulation need not be a composition tiling. For example, in the unordered triangulation
\[
\begin{tikzcd}
	\bullet & \bullet \\
	\bullet & \bullet
	\arrow[from=1-1, to=2-2]
	\arrow[from=1-1, to=2-1]
	\arrow[from=2-2, to=2-1]
	\arrow[from=2-2, to=1-2]
	\arrow[from=1-1, to=1-2]
\end{tikzcd}
\]
there is not a choice of precisely two spines whose union is the set of outer edges. However, every (ordered) triangulation of the $(n+1)$-gon is a composition tiling, linking the spine $0\to 1\to \ldots (n-1)\to n$ with the spine $0\to n$. At the same time, not every composition tiling from $Sp[n]$ to $Sp[1]$ is a triangulation since in general composition tilings can have interior vertices.
\end{ex}

\begin{rmk}\label{rmk:compositiontilingswork}
Recall that $h\colon \sSet\to\Cat$ is the left adjoint of the nerve functor. Given a simplicial set $X$, we can construct $hX$ explicitly by first letting the set of objects of $hX$ equal the set of 0-simplices $X_0$. To define $\Hom_{hX}(x,y)$ for $x,y\in X_0$, we take the set of all maps $Sp[n]\to X$ (for varying $n$) that start at $x$ and end at $y$ and then quotient out by the equivalence relation where $f\colon Sp[r]\to X$ is equivalent to $g\colon Sp[s]\to X$ if there exists a composition tiling $C^{r,s}$ and a map $C^{r,s}\to X$ such that restricting along $Sp[r]\hookrightarrow C^{r,s}$ is $f$ and restricting along $Sp[s]\hookrightarrow C^{r,s}$ is $g$. The composition functions are induced by concatenation of spines.

Although this construction of $hX$ is nonstandard, one can check that is just another way of phrasing the more standard explicit construction given in \cite{Rezk}.
\end{rmk}

\begin{defn}
For any $r\geq 1$ and composition tiling $C^{r,1}$, call the pushout
\[
\begin{tikzcd}
{Sp[1]=\Delta[1]} \arrow[d, hook] \arrow[r] & {\Delta[0]} \arrow[d, hook] \\
{C^{r,1}} \arrow[r]                         & \widetilde{C}^r            
\end{tikzcd}
\]
a \emph{pinched tiling}. Call the inclusions $\Delta[1]\hookrightarrow Sp[r]\hookrightarrow \widetilde{C}^r$ coming from the $0\to 1$ and $(r-1)\to r$ edge inclusions $\Delta[1]\to Sp[r]$ the \emph{first edge inclusion} and \emph{last edge inclusion}, respectively.
\end{defn}

\begin{ex}
In Example \ref{ex:compositiontiling}, we collapse the rightmost arrow of $C^{4,1}$ to a degeneracy to get a pinched tiling $\widetilde{C}^4$. The first edge inclusion is the bottom-most edge, and the last edge inclusion is the top-most edge in the picture.
\end{ex}

\begin{ex}
The standard 2-simplex $\Delta[2]$ is itself a composition tiling $C^{2,1}$. We collapse the $0\to 2$ edge to get a pinched tiling $\widetilde{C}^2$, whose first edge inclusion is $0\to 1$ and last edge inclusion is $1\to 2$.
\end{ex}

Since a map from a composition tiling $C^{r,1}\to X$ is capturing that the restriction to $Sp[r]\to X$ and to $\Delta[1]\to X$ correspond to the same morphism in $hX$, we can see that a map from the respective pinched tiling $\widetilde{C}^r\to X$ is capturing that the restriction to $Sp[r]\to X$ becomes the identity in $hX$. In particular, the first edge inclusion of a pinched tiling (the $0\to 1$ edge of the spine $Sp[r]$) has a left inverse (coming from the $1\to 2\to \ldots \to r$ edges), and the last edge inclusion (the $(r-1)\to r$ edge) has a right inverse ($0\to 1\to \ldots \to (r-1)$). In fact, by Remark \ref{rmk:compositiontilingswork}, an edge $\Delta[1]\to X$ has a right or left inverse in $hX$ if and only if it extends along a pinched tiling. We record this observation as a lemma.

\begin{lem}\label{lem:leftrightpreinv}
Given an edge $e\colon \Delta[1]\to X$, the morphism $h(e)$ has left (right) inverse in $hX$ if and only if there exists a pinched tiling $\widetilde{C}^r$ and a map $\widetilde{C}^r\to X$ that restricts to $e$ along the first (last) edge inclusion.
\end{lem}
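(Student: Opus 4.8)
The plan is to unwind the explicit model of the homotopy category $hX$ recalled in Remark~\ref{rmk:compositiontilingswork} and to match, step by step, the two sides of the claimed biconditional with the combinatorial data of a composition tiling. I will write out the argument for left inverses and the first edge inclusion; the case of right inverses and the last edge inclusion is entirely symmetric, the only change being which of the two spines linked by the composition tiling the edge $e$ lies on.

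Write $e\colon x\to y$, so $x=e(0)$ and $y=e(1)$. By Remark~\ref{rmk:compositiontilingswork}, $h(e)\colon x\to y$ is the class of the length-$1$ spine $e$, the identity $\id_x$ is the class of the degenerate spine $s_0x\colon\Delta[1]\to X$, and any morphism $g\colon y\to x$ of $hX$ is represented by some spine $\sigma\colon Sp[r-1]\to X$ from $y$ to $x$. Concatenating $e$ with $\sigma$ produces a spine $\tau\colon Sp[r]\to X$ from $x$ to $x$ whose first edge (the $0\to 1$ edge) is $e$, and by the definition of the composition functions in Remark~\ref{rmk:compositiontilingswork} the class of $\tau$ in $hX$ is $g\circ h(e)$. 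Thus $g$ is a left inverse of $h(e)$ precisely when $\tau$ and $s_0x$ represent the same morphism of $hX$, i.e.\ precisely when there is a composition tiling $C^{r,1}$ and a map $C^{r,1}\to X$ restricting to $\tau$ along $Sp[r]\hookrightarrow C^{r,1}$ and to $s_0x$ along $Sp[1]\hookrightarrow C^{r,1}$.

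The remaining step is to identify such maps $C^{r,1}\to X$ with maps out of the pinched tiling $\widetilde{C}^r$. Since $\widetilde{C}^r$ is the pushout of $C^{r,1}\hookleftarrow Sp[1]=\Delta[1]\to\Delta[0]$ collapsing the length-$1$ spine, the universal property says that a map $\widetilde{C}^r\to X$ is the same thing as a map $C^{r,1}\to X$ for which the composite $\Delta[1]=Sp[1]\hookrightarrow C^{r,1}\to X$ factors through $\Delta[0]$, that is, equals $s_0x$ for some $x\in X_0$; and under this identification the first edge inclusion $\Delta[1]\hookrightarrow Sp[r]\hookrightarrow\widetilde{C}^r$ composed with $\widetilde{C}^r\to X$ is exactly the first edge of $\tau$, namely $e$. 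Combining this with the previous paragraph yields the equivalence: $h(e)$ has a left inverse in $hX$ iff there is a composition tiling $C^{r,1}$ and a map $C^{r,1}\to X$ as above iff there is a pinched tiling $\widetilde{C}^r$ and a map $\widetilde{C}^r\to X$ restricting to $e$ along the first edge inclusion. The parenthetical statement follows by running the same argument with $h(e)\circ g$ in place of $g\circ h(e)$: then $e$ appears as the \emph{last} edge of the concatenated spine, so the last edge inclusion takes the place of the first.

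I do not expect a genuine obstacle: once the description of $hX$ from Remark~\ref{rmk:compositiontilingswork} is in hand, the lemma is a bookkeeping exercise. The one point that needs care in the write-up is keeping the variances straight --- making sure that precomposition by $h(e)$ places $e$ at the start of the spine (hence the first edge inclusion) while postcomposition places it at the end (hence the last edge inclusion), and that the collapsed length-$1$ spine of $\widetilde{C}^r$ is precisely the one representing an identity morphism.
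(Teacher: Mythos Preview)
Your proposal is correct and matches the paper's approach: the paper does not actually give a proof of this lemma, but rather states just before it that ``by Remark~\ref{rmk:compositiontilingswork}, an edge $\Delta[1]\to X$ has a right or left inverse in $hX$ if and only if it extends along a pinched tiling,'' and then records this observation as the lemma. Your write-up is precisely the unwinding of that observation, spelling out how the explicit description of $hX$ in Remark~\ref{rmk:compositiontilingswork} translates the existence of a one-sided inverse into the existence of a composition tiling $C^{r,1}\to X$ whose length-$1$ spine is degenerate, and then using the pushout defining $\widetilde{C}^r$ to repackage this as a map out of a pinched tiling.
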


Since we can use pinched tilings to identify edges of a simplicial set $X$ that have left or right inverses, we can use a pushout of pinched tilings to identify edges that have both inverses.

\begin{defn}
Given two pinched tilings $\widetilde{C}^{r}$ and $(\widetilde{C}')^{s}$, let $T$ be the pushout
\[
\begin{tikzcd}
{\Delta[1]} \arrow[d, "\text{last}"', hook] \arrow[r, "\text{first}", hook] & (\widetilde{C}')^s \arrow[d, hook] \\
\widetilde{C}^r \arrow[r, hook]                                             & T\nospace{.}                                 
\end{tikzcd}
\]
Call $T$ an \emph{inverting tiling}, and let $e_T$ denote the diagonal composite map $\Delta[1]\hookrightarrow T$. Call $e_T$ the \emph{inverting inclusion} of $T$.
\end{defn}

The following proposition shows that categorical pre-isomorphisms are characterized by maps out of inverting tilings.

\begin{prop}
An edge $e\colon \Delta[1]\to X$ is a categorical pre-isomorphism if and only if there exists an inverting tiling $T$ such that $e$ extends along the inclusion $e_T$.
\[
\begin{tikzcd}
{\Delta[1]} \arrow[d, "e_T"'] \arrow[r, "e"] & X \\
T \arrow[ru, dotted]                         &  
\end{tikzcd}
\]
\end{prop}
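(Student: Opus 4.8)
The plan is to reduce the statement to Lemma \ref{lem:leftrightpreinv} together with the elementary fact that a morphism in a category is an isomorphism precisely when it admits both a left inverse and a right inverse. First I would record the key bookkeeping observation: writing $e_T$ for the inverting inclusion of an inverting tiling $T$ built from pinched tilings $\widetilde{C}^r$ and $(\widetilde{C}')^s$ as in the defining pushout, the inclusion $e_T\colon \Delta[1]\hookrightarrow T$ is simultaneously the last edge inclusion of the copy of $\widetilde{C}^r$ inside $T$ and the first edge inclusion of the copy of $(\widetilde{C}')^s$ inside $T$. This is immediate from the placement of the ``first'' and ``last'' legs in the pushout square.

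For the ``if'' direction, suppose $e$ extends along $e_T$ via some map $\varphi\colon T\to X$. Precomposing $\varphi$ with the inclusion $\widetilde{C}^r\hookrightarrow T$ yields a map $\widetilde{C}^r\to X$ that restricts to $e$ along the last edge inclusion, so by Lemma \ref{lem:leftrightpreinv} the morphism $h(e)$ has a right inverse in $hX$. Symmetrically, precomposing $\varphi$ with $(\widetilde{C}')^s\hookrightarrow T$ shows $h(e)$ has a left inverse. A morphism with both a left and a right inverse is an isomorphism, so $e$ is a categorical pre-isomorphism.

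For the ``only if'' direction, suppose $e$ is a categorical pre-isomorphism, so $h(e)$ is an isomorphism in $hX$ and in particular has both a left inverse and a right inverse. By Lemma \ref{lem:leftrightpreinv}, the existence of a right inverse produces a pinched tiling $\widetilde{C}^r$ together with a map $\widetilde{C}^r\to X$ restricting to $e$ along the last edge inclusion, and the existence of a left inverse produces a pinched tiling $(\widetilde{C}')^s$ together with a map $(\widetilde{C}')^s\to X$ restricting to $e$ along the first edge inclusion. Forming the inverting tiling $T$ from $\widetilde{C}^r$ and $(\widetilde{C}')^s$, both maps restrict to $e$ on the common $\Delta[1]$ identified by the defining pushout, so the universal property of the pushout produces a map $T\to X$, which by construction restricts to $e$ along $e_T$.

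The whole argument is essentially bookkeeping, and I expect the only point requiring care — hence the ``main obstacle,'' such as it is — to be matching up ``left inverse''/``right inverse'' in Lemma \ref{lem:leftrightpreinv} with ``first edge inclusion''/``last edge inclusion'' and with the two legs of the pushout defining $T$, so that the glued map genuinely restricts to $e$ along $e_T$ rather than along some other edge. Once these identifications are fixed, no further subtlety arises.
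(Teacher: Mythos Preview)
Your proposal is correct and follows essentially the same approach as the paper: reduce to Lemma~\ref{lem:leftrightpreinv}, use that an isomorphism is precisely a morphism with both a left and a right inverse, and invoke the universal property of the pushout defining $T$. The paper's proof compresses all of this into a single sentence, but the content is identical.
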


\begin{proof}
By Lemma \ref{lem:leftrightpreinv}, the morphism $h(e)$ has a left and a right inverse if and only if there exist two pinched tilings such that $e$ extends along the first inclusion of one and the last inclusion of the other, which happens if and only if $e$ extends along the pushout of those inclusions.
\end{proof}

\begin{ex}
The simplicial set $K$ from Example \ref{ex:K} is an inverting tiling built out of the pinched tiling $\widetilde{C}^2$.
\end{ex}

This set of inverting tilings $\{T\}$ characterizes which edges of a simplicial set we want to think of as invertible, in the context of the Joyal model structure. So, in the spirit of Section \ref{sec:behaved}, let us consider the set of $T$-augmented horn inclusions from Definition \ref{def:pinchedandaugmentedhorns}.

\begin{defn}
Let $\mcE=\{\Delta[1]\hookrightarrow T\}$ be the set of all inverting inclusions into inverting tilings. Given $\Delta[1]\hookrightarrow T$ in $\mcE$, we say that a $T$-augmented horn inclusion is a \emph{special horn inclusion}. If the horn is outer, we say that it is a \emph{special outer horn inclusion}. Let $\SpHorn$ be the set of all special horn inclusions and let $\SpOutHorn$ be the set of all special outer horn inclusions.
\end{defn}

\begin{rmk}\label{rmk:sphorncountable}
The sets $\SpHorn$ and $\SpOutHorn$ are countable by Remark \ref{rmk:countability}.
\end{rmk}

Recall that the standard phrasing of the special outer horn lifting property of quasi-categories is that there exist lifts of outer horns so long as they satisfy the additional property that a certain edge is sent to a categorical pre-isomorphism. We can now rephrase this condition directly as a lifting condition with respect to the set of special horn inclusions.

\begin{prop}\label{prop:qcatsphorn2}
If $Q$ is a quasi-category, then $Q\to \ast$ has the right lifting property with respect to $\SpOutHorn$.
\end{prop}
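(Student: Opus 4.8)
The plan is to reduce the claim to Joyal's special outer horn lifting property, Proposition~\ref{prop:qcatsphorn}, by unwinding the pushout that defines a $T$-augmented simplex. Recall first that every element of $\SpOutHorn$ has the form $\Lambda^0[n]^T_{0\to 1}\hookrightarrow\Delta[n]^T_{0\to 1}$ or $\Lambda^n[n]^T_{n-1\to n}\hookrightarrow\Delta[n]^T_{n-1\to n}$ for some $n\geq 2$ and some inverting tiling $T$ with inverting inclusion $e_T\colon\Delta[1]\hookrightarrow T$. I would treat the first family in detail, the second being symmetric.

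Given a map $u\colon\Lambda^0[n]^T_{0\to 1}\to Q$, I would first use the pushout square defining $A^X_{i\to i+1}$ to see that $u$ is the same data as a pair $\bar u\colon\Lambda^0[n]\to Q$ and $v\colon T\to Q$ agreeing, on the edge $\Delta[1]$, along the $0\to 1$ edge inclusion into $\Lambda^0[n]$ and along $e_T$ respectively; call this common edge $e\colon\Delta[1]\to Q$. The next step is the crucial one: since $e$ factors through the inverting inclusion $e_T$ (via $v$), the proposition characterizing categorical pre-isomorphisms by maps out of inverting tilings shows that $e$ is a categorical pre-isomorphism in $Q$. Consequently the $0\to 1$ edge of $\Lambda^0[n]$ is sent by $\bar u$ to a categorical pre-isomorphism, so Proposition~\ref{prop:qcatsphorn} applies (as $Q$ is a quasi-category) and yields an extension $\bar w\colon\Delta[n]\to Q$ of $\bar u$ along $\Lambda^0[n]\hookrightarrow\Delta[n]$.

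To finish, I would observe that $\bar w$, extending $\bar u$, still restricts to $e$ on the $0\to 1$ edge, so the pair $(\bar w,v)$ is compatible and, by the pushout defining $\Delta[n]^T_{0\to 1}$, induces a map $w\colon\Delta[n]^T_{0\to 1}\to Q$; a direct check with the two pushout squares shows $w$ restricts to $u$ along $\Lambda^0[n]^T_{0\to 1}\hookrightarrow\Delta[n]^T_{0\to 1}$, giving the required lift. The case of $\Lambda^n[n]^T_{n-1\to n}$ runs the same way, using the last-edge inclusion $\Delta[1]\hookrightarrow T$, the resulting fact that the image of the $(n-1)\to n$ edge is a categorical pre-isomorphism, and the second half of Proposition~\ref{prop:qcatsphorn}.

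I do not expect a genuine obstacle here: the argument is mostly bookkeeping about the universal properties of the two pushouts. The one point that deserves care is that the $T\to Q$ component $v$ of the data is unchanged when passing from the horn $\Lambda^0[n]^T_{0\to 1}$ to the simplex $\Delta[n]^T_{0\to 1}$, so the only compatibility that needs checking for the induced map on the pushout is precisely that $\bar w$ extends $\bar u$ --- which is exactly what Proposition~\ref{prop:qcatsphorn} provides.
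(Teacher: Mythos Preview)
Your proposal is correct and follows essentially the same approach as the paper: reduce to Proposition~\ref{prop:qcatsphorn} by observing that the $0\to 1$ (resp.\ $(n-1)\to n$) edge factors through the inverting inclusion $e_T$ and is therefore a categorical pre-isomorphism, then use the pushout description of $\Delta[n]^T$ to assemble the lift. The paper's proof is terser about the pushout bookkeeping you spell out, and one small terminological slip: in the $\Lambda^n[n]$ case you should still be gluing along the inverting inclusion $e_T$, not a ``last-edge inclusion'' (that phrase belongs to pinched tilings $\widetilde{C}^r$, not to $T$).
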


\begin{proof}
All inner special horns are pushouts of ordinary inner horns, so it suffices just to check outer special horns. By symmetry, it suffices to consider an inverting inclusion $\Delta[1]\hookrightarrow T$ and a special horn map $\Lambda^0[n]^T_{0\to 1}\to Q$. Because the edge $0\to 1$ factors through $T\to Q$, it is sent to a categorical pre-isomorphism in $Q$, and so by the special horn lifting property, we get an extension of the horn $\Lambda^0[n]\to Q$ to $\Delta[n]\to Q$, inducing a lift of the original special horn map $\Delta[n]^T_{0\to 1}\to Q$.
\end{proof}

We now turn to constructing the special horn model structure using Cisinski's theory.

\begin{lem}\label{lem:sphornAn2'}
The class generated by the set of special horn inclusions satisfies axiom (An2$'$) from Lemma \ref{lem:altaxioms}.
\end{lem}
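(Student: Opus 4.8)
The plan is to deduce this in exactly the way Corollary~\ref{cor:an2fortherest} deduces the corresponding statement for almost-$I$-augmented horn inclusions: by applying Lemma~\ref{lem:an2fortherest} with its parameter $I'$ taken to be an inverting tiling. Working with the pointwise cylinder $\kappa\odot-$ (recall that $K$ is itself an inverting tiling), I would fix a special horn inclusion $\Lambda^j[n]^{T}_{i\to i+1}\hookrightarrow\Delta[n]^{T}_{i\to i+1}$, so that $T$ is an inverting tiling with inverting inclusion $e_T\colon\Delta[1]\hookrightarrow T$, $n\geq 2$, $0\leq i\leq n-1$, and $j\in\{i,i+1\}$, and then show that
\[
\bigl(\kappa\odot\Lambda^j[n]^{T}_{i\to i+1}\bigr)\cup\bigl(\partial K\odot\Delta[n]^{T}_{i\to i+1}\bigr)\hookrightarrow \kappa\odot\Delta[n]^{T}_{i\to i+1}
\]
lies in ${}^{\lifts}(\SpHorn^{\lifts})$.

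The first step is to pin down which simplices this inclusion attaches. Write $A_0=\Lambda^j[n]^{T}_{i\to i+1}$ and $B_0=\Delta[n]^{T}_{i\to i+1}$, and let $\Delta[n]\cin B_0$ be the underlying $n$-simplex. Because $n\geq 2$, the subcomplexes $A_0$ and $B_0$ have the same $0$-simplices, so the extra copies of $K$ attached by the pointwise cylinder along the vertical edges already lie inside $\kappa\odot A_0$; and because $A_0$ contains the glued-in copy of $T$, the whole product $\Delta[1]\times T$ lies inside $\kappa\odot A_0$ as well. It follows that every simplex of $\kappa\odot B_0$ missing from $(\kappa\odot A_0)\cup(\partial K\odot B_0)$ lies in $\Delta[1]\times\Delta[n]\cin\kappa\odot B_0$, and that the intersection of $\Delta[1]\times\Delta[n]$ with $(\kappa\odot A_0)\cup(\partial K\odot B_0)$ is precisely $\bigl((\{0\}\sqcup\{1\})\times\Delta[n]\bigr)\cup\bigl(\Delta[1]\times\Lambda^j[n]\bigr)$. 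Hence the displayed inclusion is a pushout of
\[
\bigl((\{0\}\sqcup\{1\})\times\Delta[n]\bigr)\cup\bigl(\Delta[1]\times\Lambda^j[n]\bigr)\hookrightarrow\Delta[1]\times\Delta[n].
\]

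To finish, I would verify the edge hypothesis of Lemma~\ref{lem:an2fortherest} with $I'=T$ and horn index $j$. Inside $B_0$ the $i\to(i+1)$ edge of $\Delta[n]$ is the image of $e_T$, so in each copy $\{\varepsilon\}\times B_0\cin\partial K\odot B_0$ it is a $T$-edge; and since $j=i$ or $j=i+1$, this edge is one of the two admissible families $(\varepsilon,j)\to(\varepsilon,j+1)$ or $(\varepsilon,j-1)\to(\varepsilon,j)$. Lemma~\ref{lem:an2fortherest} then applies and expresses the pushout above as a finite composite of pushouts of $T$-augmented horn inclusions; since $T$ is an inverting tiling, each of these is a special horn inclusion, so the whole map lies in ${}^{\lifts}(\SpHorn^{\lifts})$, as required. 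I expect the only delicate point to be the middle step --- one must confirm carefully that folding together the pointwise cylinder and the glued-in inverting tiling $T$ adds nothing beyond the usual prisms over the underlying simplex $\Delta[n]$, so that Lemma~\ref{lem:an2fortherest} can be invoked essentially verbatim; once that is in hand the rest is formal.
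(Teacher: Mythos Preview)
Your proposal is correct and follows exactly the paper's approach: the paper's proof is the single sentence ``Apply Lemma~\ref{lem:an2fortherest} where $I'=T$, the inverting tiling for a given special horn.'' You have simply unpacked in detail what that application entails---identifying the (An2$'$) map as a pushout of the prism inclusion and verifying the edge hypothesis---which the paper leaves implicit (just as it does in the parallel Corollary~\ref{cor:an2fortherest}).
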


\begin{proof}
Apply Lemma \ref{lem:an2fortherest} where $I'=T$, the inverting tiling for a given special horn.
\end{proof}

\begin{cor}\label{cor:sphornanodyne}
The class generated by $\SpHorn\cup \{\{\varepsilon\}\hookrightarrow K\}$ together with the set of almost-$K$-augmented horn inclusions is $(\kappa\odot-)$-anodyne.
\end{cor}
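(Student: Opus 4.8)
The plan is to invoke Lemma \ref{lem:altaxioms}: writing $\Lambda$ for the set $\SpHorn \cup \{\{\varepsilon\}\hookrightarrow K \mid \varepsilon = 0,1\}$ together with the almost-$K$-augmented horn inclusions (all of which are monomorphisms), it suffices to verify axioms (An1$'$) and (An2$'$) for $\Lambda$ with respect to the pointwise cylinder $\kappa\odot-$. The one general fact I would use repeatedly is monotonicity of the saturation operator: if $S\subseteq \Lambda$, then ${}^{\lifts}(S^{\lifts}) \subseteq {}^{\lifts}(\Lambda^{\lifts})$, since ${}^{\lifts}(\Lambda^{\lifts})$ is the closure of $\Lambda$ under pushouts, transfinite composites, and retracts, and hence contains the closure of any subset.

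First I would dispose of everything except the special horn inclusions. Let $S$ be the subset of $\Lambda$ consisting of the maps $\{\varepsilon\}\hookrightarrow K$ together with the almost-$K$-augmented horn inclusions. By Proposition \ref{prop:JandKanodyne}(2), $S$ already generates a $(\kappa\odot-)$-anodyne class, so by Lemma \ref{lem:altaxioms} axiom (An1$'$) holds for $S$ and axiom (An2$'$) holds for every generator of $S$, with the relevant induced maps lying in ${}^{\lifts}(S^{\lifts})$. Since the maps appearing in (An1$'$) depend only on the cylinder, and the (An2$'$) maps attached to generators in $S$ are unchanged when we enlarge the generating set to $\Lambda$, monotonicity immediately gives (An1$'$) for $\Lambda$ together with the part of (An2$'$) that concerns generators lying in $S$.

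The only remaining case is axiom (An2$'$) for the special horn inclusions, and this is exactly the content of Lemma \ref{lem:sphornAn2'}: for each special horn inclusion $A\hookrightarrow B$, the induced map $(\kappa\odot A) \cup (\partial K\odot B) \hookrightarrow \kappa\odot B$ lies in ${}^{\lifts}(\SpHorn^{\lifts})$, hence in ${}^{\lifts}(\Lambda^{\lifts})$ by monotonicity. Having verified (An1$'$) and (An2$'$) for $\Lambda$, Lemma \ref{lem:altaxioms} and Definition \ref{def:anodyneclass} yield that ${}^{\lifts}(\Lambda^{\lifts})$ is a $(\kappa\odot-)$-anodyne class, which is the assertion. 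I do not expect a genuine obstacle here: the two substantive inputs---Proposition \ref{prop:JandKanodyne}(2), which supplies (An1$'$) and the earlier half of (An2$'$), and Lemma \ref{lem:sphornAn2'}, which supplies the new half of (An2$'$) for special horns---have already absorbed all of the real difficulty, so what is left is only the bookkeeping observation that enlarging a generating set cannot invalidate conditions already verified for a subset.
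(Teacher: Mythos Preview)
Your proposal is correct and follows essentially the same approach as the paper: invoke Proposition~\ref{prop:JandKanodyne}(2) to handle (An1$'$) and the portion of (An2$'$) coming from the almost-$K$-augmented horn inclusions and $\{\varepsilon\}\hookrightarrow K$, then use Lemma~\ref{lem:sphornAn2'} for the remaining (An2$'$) cases arising from $\SpHorn$. The only difference is that you spell out the monotonicity of saturation explicitly, which the paper leaves implicit.
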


\begin{proof}
We already knew from Proposition \ref{prop:JandKanodyne} that axiom (An1$'$) is satisfied, as well as (An2$'$) for $\{\{\varepsilon\}\hookrightarrow K\}$ together with the set of almost-$K$-augmented horn inclusions. Lemma \ref{lem:sphornAn2'} tells us that (An2$'$) is satisfied for the remaining maps.
\end{proof}

\begin{thm}\label{thm:sphornMS}
There is a Cisinski model structure on $\sSet$ whose fibrant objects are are the simplicial sets $X$ such that $X\to \ast$ has the right lifting property with respect to the set of special horn inclusions.
\end{thm}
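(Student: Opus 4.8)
The plan is to obtain the model structure directly from Cisinski's Theorem~\ref{thm:cisinskiMS} using the anodyne class produced in Corollary~\ref{cor:sphornanodyne}, and then to simplify the resulting description of the fibrant objects.

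First I would set $\Lambda$ to be $\SpHorn$ together with the maps $\{\varepsilon\}\hookrightarrow K$ for $\varepsilon = 0,1$ and the set of almost-$K$-augmented horn inclusions. By Corollary~\ref{cor:sphornanodyne} the class ${}^\lifts(\Lambda^\lifts)$ is $(\kappa\odot-)$-anodyne, and $\Lambda$ is genuinely a set ($\SpHorn$ is countable by Remark~\ref{rmk:sphorncountable} and the almost-$K$-augmented horn inclusions are countable by Remark~\ref{rmk:countability}), so since $\kappa\odot-$ is an exact cylinder, Theorem~\ref{thm:cisinskiMS} yields a cofibrantly generated model structure on $\sSet$ whose cofibrations are the monomorphisms---that is, a Cisinski model structure---and whose fibrant objects are precisely the simplicial sets $X$ such that $X\to\ast$ has the right lifting property against $\Lambda$.

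It then remains to check that the right lifting property against all of $\Lambda$ is equivalent to the right lifting property against $\SpHorn$ alone; since $\SpHorn\subseteq\Lambda$, only one implication requires work. So suppose $X\to\ast$ has the right lifting property against every special horn inclusion. The maps $\{\varepsilon\}\hookrightarrow K$ are of the form $\ast\hookrightarrow A$, against which every $X\to\ast$ lifts by the constant map, exactly as in the proof of Proposition~\ref{prop:JandKhtpyMS}, so these impose no condition. For the almost-$K$-augmented horn inclusions, I would use that $K$ is an inverting tiling with inverting inclusion $\kappa$, so every $K$-augmented horn inclusion already belongs to $\SpHorn$; hence $X$ has the right lifting property against all $K$-augmented horn inclusions, and Proposition~\ref{prop:almostIbecomesI} (applied with $I=K\neq\Delta[1]$, which has two $0$-simplices) shows that every almost-$K$-edge of $X$ is in fact a $K$-edge. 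Given an almost-$K$-augmented horn $\Lambda^j[n]^{\mcT^K}_{i\to i+1}\to X$, the image of the $i\to(i+1)$ edge is then a $K$-edge, so the data of the horn factors through a genuine $K$-augmented horn $\Lambda^j[n]^K_{i\to i+1}\to X$; lifting along $\Lambda^j[n]^K_{i\to i+1}\hookrightarrow\Delta[n]^K_{i\to i+1}$ and then regluing the given map out of $\mcT^K$ along the $i\to(i+1)$ edge produces the required extension $\Delta[n]^{\mcT^K}_{i\to i+1}\to X$, which one checks restricts correctly on both the horn and the copy of $\mcT^K$. This identifies the fibrant objects with the simplicial sets having the right lifting property against $\SpHorn$, as claimed.

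Since every substantive ingredient---the anodyne class of Corollary~\ref{cor:sphornanodyne}, Cisinski's machine, and Proposition~\ref{prop:almostIbecomesI}---is already available, I do not anticipate a real obstacle; the only point demanding care is the final gluing argument reducing lifts against almost-$K$-augmented horns to lifts against honest $K$-augmented horns, and even that is essentially bookkeeping.
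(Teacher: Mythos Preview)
Your proposal is correct and follows essentially the same approach as the paper: apply Theorem~\ref{thm:cisinskiMS} via the anodyne class of Corollary~\ref{cor:sphornanodyne}, then reduce the fibrancy condition to $\SpHorn$ alone using that $K$ is itself an inverting tiling and that almost-$K$-edges become $K$-edges. The only cosmetic difference is that the paper cites Corollary~\ref{cor:almostIaugbecomesIaug} directly for the reduction of almost-$K$-augmented horn lifts, whereas you reprove that corollary by hand via Proposition~\ref{prop:almostIbecomesI} and an explicit gluing argument.
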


\begin{proof}
By Theorem \ref{thm:cisinskiMS}, we get a Cisinski model structure from Corollary \ref{cor:sphornanodyne} whose fibrant objects are those with lifts against $\SpHorn\cup \{\{\varepsilon\}\hookrightarrow K\}$ as well as the set of $K$-augmented horn inclusions. We claim that simply knowing $X\to \ast$ has lifts of special horn inclusions is enough to conclude that $X$ is fibrant in this model structure. We first note that $X\to \ast$ has the right lifting property with respect to $\{\{\varepsilon\}\hookrightarrow K\}$ for all simplicial sets $X$. Now, note that $K$ is itself an inverting tiling, so if $X\to\ast$ has lifts of special horn inclusions, it in particular has lifts of $K$-augmented horn inclusions, so by Corollary \ref{cor:almostIaugbecomesIaug} we see that $X\to \ast$ has lifts of all almost-$K$-augmented horn inclusions.
\end{proof}

\begin{defn}
We call the model structure in Theorem \ref{thm:sphornMS} the \emph{special horn model structure}. We say a simplicial set is \emph{special horn fibrant} if it is fibrant in this model structure.
\end{defn}

\begin{rmk}\label{rmk:specialdifferentMS}
The special outer horn lifting property of quasi-categories implies that the Joyal model structure is a localization of the special horn model structure. While these model structures have a close relationship in sharing a notion of ``invertible edges,'' they are distinct because $\Lambda^1[2]$ is fibrant in the special horn model structure but not in the Joyal model structure.

The fact that $K$ is itself an inverting tiling means that every special horn fibrant simplicial set has lifts of $K$-augmented horns. Therefore, the special horn model structure is a localization of the $K$-minimal homotopically-behaved model structure. These model structures are also distinct; if $T$ is as in Example \ref{ex:badT}, then the special horns $\Lambda^i[n]^T_{i\to i+1}$ are fibrant in the $K$-minimal homotopically-behaved model structure but not in the special horn model structure.

The special horn model structure is therefore a curious intermediate between the $K$-minimal homotopically-behaved model structure and the Joyal model structure. The fibrant objects are very similar to those of the $K$-minimal homotopically-behaved model structure, making it tempting to claim it as a model structure with ``the homotopical properties of quasi-categories without the composition aspects.'' However, compositionality actually does play a subtle but key role in determining the notion of homotopy for the special horn model structure.
\end{rmk}

We conclude this section by conjecturing a partial characterization of the trivial cofibrations in the Joyal model structure.

\begin{conj}\label{conj:joyalbijon0simp}
The class of trivial cofibrations in the Joyal model structure that are bijective-on-0-simplices is generated by the set of inner horn inclusions together with the set of special outer horn inclusions. That is, the bijective-on-0-simplices trivial cofibrations are precisely $^{\lifts}((\InnHorn\cup \SpOutHorn)^{\lifts})$.
\end{conj}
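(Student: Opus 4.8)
Write $\mcA := \InnHorn \cup \SpOutHorn$ and $\ol{\mcA} := {}^{\lifts}(\mcA^{\lifts})$, and let $\mcW_0$ denote the class of bijective-on-$0$-simplices trivial cofibrations in the Joyal model structure. First I would check $\ol{\mcA}\cin\mcW_0$. Each inner horn inclusion $\Lambda^i[n]\hookrightarrow\Delta[n]$ is a Joyal trivial cofibration, and it is bijective on $0$-simplices since $\Lambda^i[n]$ contains every vertex of $\Delta[n]$ for $n\geq 2$. Each special outer horn inclusion is a trivial cofibration of the special horn model structure (these generate its anodyne class), hence a Joyal trivial cofibration since the Joyal model structure is a localization of the special horn one (Remark \ref{rmk:specialdifferentMS}); and it is bijective on $0$-simplices because $\Lambda^0[n]$ and $\Lambda^n[n]$ contain all vertices of $\Delta[n]$ for $n\geq 2$ and gluing in a copy of an inverting tiling $T$ adds the same new $0$-simplices to source and target. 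Finally, the class of bijective-on-$0$-simplices maps is closed under pushout, transfinite composition, and retract (at the level of $0$-simplices these are pushouts, colimits, and retracts of bijections in $\Set$), so $\mcW_0$, being its intersection with the saturated class of Joyal trivial cofibrations, is itself saturated; containing $\mcA$, it contains $\ol{\mcA}$.

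\textbf{Reduction to a lifting statement.} For the reverse containment I would run the retract argument. Given $f\colon X\to Y$ in $\mcW_0$, use the small object argument for the (countable, by Remark \ref{rmk:sphorncountable}) set $\mcA$ to factor $f$ as $X\xrightarrow{\,i\,}Z\xrightarrow{\,p\,}Y$ with $i\in\ol{\mcA}$ and $p\in\mcA^{\lifts}$. Then $i\in\ol{\mcA}\cin\mcW_0$, so $p$ is bijective on $0$-simplices (both $f$ and $i$ are) and a Joyal weak equivalence (two-out-of-three). If one can show such a $p$ automatically has the right lifting property against $f$ — equivalently, against every map of $\mcW_0$ — then $f$ is a retract of $i$ and hence lies in $\ol{\mcA}$. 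So the conjecture reduces to the following assertion: \emph{any map $p$ that has the right lifting property against $\InnHorn\cup\SpOutHorn$, is bijective on $0$-simplices, and is a Joyal weak equivalence, has the right lifting property against every bijective-on-$0$-simplices Joyal trivial cofibration.}

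\textbf{Toward the lifting statement.} Such a $p\colon Z\to Y$ is in particular an inner fibration, and it lifts against all of $\SpHorn$ (special inner horns are pushouts of ordinary inner horns) and against all almost-$K$-augmented horns (their augmenting edge is a categorical pre-isomorphism, so it factors through an inverting tiling, making these retracts of special horns, cf. Corollary \ref{cor:almostIaugbecomesIaug}). The key auxiliary tool is that the small object argument for $\mcA$ produces, functorially, a bijective-on-$0$-simplices Joyal-fibrant replacement $X\hookrightarrow\ol{X}$: the object $\ol{X}$ lifts against $\InnHorn$, so it is a quasi-category; it lifts against $\SpOutHorn$, so (with special inner horns automatic) it is special horn fibrant; and $X\hookrightarrow\ol{X}$ lies in $\ol{\mcA}\cin\mcW_0$, so it is a bijective-on-$0$-simplices Joyal trivial cofibration. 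Applying this to a lifting problem for $g\colon A\to B$ in $\mcW_0$ against $p$, one gets a square over the original problem whose right-hand map $\ol{g}\colon\ol{A}\to\ol{B}$ is an equivalence of quasi-categories that is bijective on objects. The crux is then a recognition statement: a bijective-on-$0$-simplices inner fibration that lifts special outer horns and is a Joyal weak equivalence behaves, fiberwise over each $0$-simplex, enough like a trivial fibration, and lifts equivalences well enough, that the classical recognition argument for categorical (Joyal) fibrations — inner fibration plus lifting of equivalences — can be carried out without ever adjoining a $0$-simplex; from this one extracts the lift of $g$ against $p$. Concretely, for a bijective-on-$0$-simplices Joyal trivial cofibration $g$ I would try to build an explicit filtration of $B$ over $A$ by pushouts of inner horns and special outer horns, using the quasi-category structure of $\ol{A}$ to locate the invertible edges that the special outer horns require.

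\textbf{The main obstacle.} The recognition statement above is precisely the content of the conjecture and is where the real difficulty lies. The standard technique of passing to a fibrant replacement and comparing is exactly what fails: a Joyal-fibrant replacement that is allowed to add $0$-simplices reintroduces maps like $\{0\}\hookrightarrow J$ and thus the full (and genuinely harder) problem of generating \emph{all} Joyal trivial cofibrations, while demanding a bijective-on-$0$-simplices fibrant replacement with the needed lifting properties is close to assuming what must be proved. I would also attempt the route through marked simplicial sets, or through Joyal's analysis of the weak saturation of the inner anodyne maps together with the equivalences, hoping to import existing machinery; but I expect the bijective-on-$0$-simplices constraint to be exactly the feature that obstructs those arguments, so that a genuinely new combinatorial input — an explicit anodyne-factorization algorithm for bijective-on-$0$-simplices trivial cofibrations — will be needed.
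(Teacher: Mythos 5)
This statement is a \emph{conjecture} in the paper, not a theorem: the author proves neither direction beyond noting, in the remark immediately following it, that the special outer horn inclusions are Joyal weak equivalences (hence the generated class sits inside the bijective-on-$0$-simplices trivial cofibrations), and explicitly flags the reverse containment as the open part. Your treatment of the easy containment is correct and matches what the paper records: inner horns and special outer horns are bijective on $0$-simplices, they are Joyal trivial cofibrations (the special outer horns via the special horn model structure of Theorem \ref{thm:sphornMS} and Remark \ref{rmk:specialdifferentMS}), and the class of bijective-on-$0$-simplices trivial cofibrations is saturated, so it contains $^{\lifts}((\InnHorn\cup \SpOutHorn)^{\lifts})$.

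For the reverse containment, your retract-argument reduction is the standard and correct first move, but the ``recognition statement'' you reduce to --- that a bijective-on-$0$-simplices Joyal weak equivalence with the right lifting property against $\InnHorn\cup\SpOutHorn$ lifts against every map of $\mcW_0$ --- is logically equivalent to the conjecture itself, and nothing in your third and fourth paragraphs establishes it; you say as much. So the proposal is not a proof, and no proof exists in the paper to compare it against. One small caution in your sketch: the claim that almost-$K$-augmented horn inclusions are ``retracts of special horns'' is not what Corollary \ref{cor:almostIaugbecomesIaug} says (that corollary is about fibrant \emph{objects}, i.e., lifting against the terminal map, and its proof uses Proposition \ref{prop:almostIbecomesI}, which also only concerns maps to $\ast$); transporting it to relative lifting against a general $p$ would need a separate argument. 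Your honest identification of the obstacle --- that any fibrant-replacement or marked-simplicial-set route founders on the bijective-on-$0$-simplices constraint --- is consistent with the paper's framing of this as open, including the fact that Campbell's counterexample rules out generation by inner horns alone but is itself a pushout of a special $2$-horn.
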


\begin{rmk}
The special outer horn inclusions are weak equivalences in the special horn model structure, and so are also Joyal weak equivalences. The uncertain aspect of the conjecture is whether the containment of $^{\lifts}((\InnHorn\cup \SpOutHorn)^{\lifts})$ in the class of bijective-on-0-simplices trivial cofibrations in the Joyal model structure is strict.
\end{rmk}

Joyal \cite{Joyal:notes} left open whether the inner horn inclusions alone generated the bijective-on-0-simplices trivial cofibrations in his model structure, but Campbell \cite{Campbell} recently provided a counter-example of a map that is bijective-on-0-simplices and a weak equivalence in the Joyal model structure but is not in $^{\lifts}(\InnHorn^{\lifts})$. Since Campbell's map is in fact a pushout of a special 2-horn, it is not a counter-example to Conjecture \ref{conj:joyalbijon0simp}.

An intuitive argument for Conjecture \ref{conj:joyalbijon0simp} is that this set of maps seems as close as possible to the set of ordinary horn inclusions (which generate the trivial cofibrations of the Kan-Quillen model structure on $\sSet$) while still being weak equivalences in the Joyal model structure.

We also state a similar conjecture for the special horn model structure.

\begin{conj}\label{conj:sphornbijon0simp}
The class of bijective-on-0-simplices trivial cofibrations in the special horn model structure is generated by the set of special horn inclusions. That is, the bijective-on-0-simplices trivial cofibrations are precisely $^{\lifts}(\SpHorn^{\lifts})$.
\end{conj}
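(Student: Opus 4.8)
The plan has two parts: the easy containment ${}^{\lifts}(\SpHorn^{\lifts})\cin\mathcal{T}_0$, where $\mathcal{T}_0$ denotes the class of bijective-on-$0$-simplices trivial cofibrations, and the hard containment $\mathcal{T}_0\cin{}^{\lifts}(\SpHorn^{\lifts})$. For the easy containment I would first check that each special horn inclusion lies in $\mathcal{T}_0$: it is a monomorphism, hence a cofibration; it is bijective on $0$-simplices because for $n\geq 2$ the horn $\Lambda^j[n]$ already contains every vertex of $\Delta[n]$ and the $T$-augmentation glues the same copy of $T$ onto domain and codomain; and it is a weak equivalence since, by Corollary \ref{cor:sphornanodyne}, it lies in the anodyne class from which the special horn model structure is built via Theorem \ref{thm:cisinskiMS}, and such anodyne maps are trivial cofibrations. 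Then I would note that $\mathcal{T}_0$ is closed under the operations generating ${}^{\lifts}(\SpHorn^{\lifts})$ — pushouts, transfinite compositions, and retracts: a pushout of a monomorphism is a monomorphism and a pushout of a trivial cofibration is a trivial cofibration, while a pushout of a map that is bijective on $0$-simplices is again bijective on $0$-simplices since $(-)_0\colon\sSet\to\Set$ preserves colimits and the pushout of a bijection is a bijection; transfinite compositions and retracts are handled in the same spirit. Hence $\mathcal{T}_0\supseteq{}^{\lifts}(\SpHorn^{\lifts})$.

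For the hard containment I would run the retract argument. Given $f\colon X\to Y$ in $\mathcal{T}_0$, the set $\SpHorn$ is countable (Remark \ref{rmk:sphorncountable}), so the small object argument factors $f$ as
\[
X\xrightarrow{\ i\ }Z\xrightarrow{\ p\ }Y,\qquad i\in{}^{\lifts}(\SpHorn^{\lifts}),\ \ p\in\SpHorn^{\lifts}.
\]
Because every map in $\SpHorn$ is bijective on $0$-simplices, $i$ is too, hence so is $p$; and since $f$ and $i$ are weak equivalences, so is $p$ by $2$-out-of-$3$. By the retract argument it suffices to fill every lifting square with $f$ on the left and $p$ on the right, and since $f$ is a trivial cofibration such a lift exists provided $p$ is a fibration, in which case $f$ is a retract of $i$ and we are done. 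So the conjecture reduces to the single statement I regard as the crux: \emph{a bijective-on-$0$-simplices weak equivalence with the right lifting property against all special horn inclusions is a fibration in the special horn model structure.}

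This crux is the analogue, for the special horn model structure and restricted to bijective-on-$0$-simplices maps, of the classical fact that Kan fibrations are exactly the fibrations of the Kan--Quillen model structure, and I expect it to be the main obstacle. The natural route is a minimal-fibration argument: show that every fibration between special-horn-fibrant objects admits a minimal model, that a minimal fibration which is a bijective-on-$0$-simplices weak equivalence must be a trivial fibration (right lifting against all monomorphisms), and conclude that a bijective-on-$0$-simplices weak equivalence in $\SpHorn^{\lifts}$ has the right lifting property against every trivial cofibration. An alternative is to exploit the comparison with the Joyal model structure: a map in $\SpHorn^{\lifts}$ is in particular an inner fibration with the special outer horn lifting property, hence close to an isofibration, and one would try to promote such a bijective-on-$0$-simplices weak equivalence to a special-horn fibration. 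The genuine difficulty, and the reason the statement is only conjectural, is that having the right lifting property against $\SpHorn$ is a priori strictly weaker than being a fibration, so one must rule out the trivial cofibrations that $\SpHorn$ might fail to generate; the fact that the inner-horns-only analogue fails for the Joyal model structure (Campbell's counterexample) shows that any proof must genuinely use the special \emph{outer} horns, although the observation that Campbell's witnessing map is itself a pushout of a special $2$-horn is consistent with the conjecture. The same strategy, with the analogous crux for the Joyal model structure, would address Conjecture \ref{conj:joyalbijon0simp}.
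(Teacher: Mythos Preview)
The statement you are addressing is a \emph{conjecture} in the paper, not a theorem: the paper offers no proof, and indeed explicitly frames it (together with the analogous Conjecture~\ref{conj:joyalbijon0simp}) as an open problem. There is therefore no ``paper's own proof'' to compare your proposal against.

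Your proposal is not a proof but a strategy outline, and as such it is accurate. The easy containment ${}^{\lifts}(\SpHorn^{\lifts})\subseteq\mathcal{T}_0$ is correctly argued. For the hard containment, your reduction via the small object argument and the retract argument to the single crux statement---that a bijective-on-$0$-simplices weak equivalence in $\SpHorn^{\lifts}$ is a fibration---is standard and correct. You then rightly identify this crux as the genuine obstacle and explain why it is nontrivial, which is exactly why the paper leaves the statement as a conjecture. In short: your analysis of what a proof would require is sound, and you correctly recognize that the decisive step remains open; you have not proved the conjecture, but neither has anyone else.
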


\section{Comparing model structures}\label{sec:comparison}

In this section, we compare the fibrant objects in the minimal model structure to the fibrant objects of homotopically-behaved model structures to get a better understanding of what it means to be homotopically-behaved. We begin by explaining the horn-based characterization of the minimal model structure's fibrant objects from \cite{Feller:minimal}.

\begin{defn}\label{def:isoplex}
Fix $n\geq 1$ and $0\leq i\leq n-1$. Let $[n]_{i}$ denote the category $[n]$ with the morphism $c_{i}\to c_{i+1}$ inverted
\[
\begin{tikzcd}[column sep=small]
c_0 \arrow[r] & c_1 \arrow[r] & \ldots \arrow[r] & c_{i-1} \arrow[r] & c_i \arrow[r, shift left] & c_{i+1} \arrow[l, shift left] \arrow[r] & c_{i+2} \arrow[r] & \ldots \arrow[r] & c_{n-1} \arrow[r] & c_n\nospace{,}
\end{tikzcd}
\]
and let $\tri_{i}[n]$ denote the nerve of $[n]_i$. Call $\tri_{i}[n]$ an \emph{$n$-isoplex}, or simply an \emph{isoplex}.

Let $d_j \tri_{i}[n]$ denote the nerve of the full subcategory of $[n]_{i}$ that includes all but the $j$th vertex. Call $d_j \tri_{i}[n]$ the \emph{$j$th face} of $\tri_{i}[n]$.
\end{defn}

For $j\neq i, i+1$, the $d_j$ face of the isoplex $\tri_{i}[n]$ is an $(n-1)$-isoplex, while the $d_i$ and $d_{i+1}$ faces are standard $(n-1)$-simplices. We can therefore think of the $n$-isoplex as an ``isomorphism of $(n-1)$-simplices'' between its $d_{i+1}$ face and its $d_i$ face.

Having defined our ``iso'' analogue of simplices and faces, we can now define ``iso-horns'' to be the union of all but one face of an isoplex. However, just like we saw with augmented horns, we want to limit ourselves to horns that omit the $d_k$ or $d_{k+1}$ face, where $k\to k+1$ is a $J$-edge. Furthermore, due to the symmetry of isoplexes, it suffices just to consider horns where the $d_k$ face is missing.

\begin{defn}\label{def:isohorn}
Let $n\geq 1$ and $0\leq i\leq n-1$ be as in Definition \ref{def:isoplex}. Let $\bV_{i}[n]$ be the union of all but the $i$th face of $\tri_{i}[n]$. We call $\bV_{i}[n]$ an \emph{iso-horn}, and call the inclusion $\bV_{i}[n]\hookrightarrow \tri_{i}[n]$ an \emph{iso-horn inclusion}.
\end{defn}

We can now state the main result of \cite{Feller:minimal}.

\begin{thm}\label{thm:isohornminimalfibrant}
A simplicial set $X$ is fibrant in the minimal model structure if and only if it has lifts of iso-horn inclusions.
\end{thm}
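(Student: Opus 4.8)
The plan is to produce, via Cisinski's Theorem~\ref{thm:cisinskiMS}, a Cisinski model structure $\mathcal{M}_{\bV}$ on $\sSet$ whose fibrant objects are exactly the simplicial sets with lifts of iso-horn inclusions, and then to show that $\mathcal{M}_{\bV}$ coincides with the minimal model structure; by Joyal's uniqueness result recorded in Remark~\ref{rmk:MSdeterminedbyfibobj}, the theorem then follows. For the construction, fix a suitable exact cylinder $I_0\otimes-$ --- one in which the ``vertical'' edges of each prism are invertible, so that the faces one must attach can be arranged to sit inside genuine isoplexes --- let $\Lambda_{\bV}$ be the set of iso-horn inclusions $\bV_i[n]\hookrightarrow\tri_i[n]$ together with the endpoint inclusion(s) $\{\varepsilon\}\hookrightarrow I_0$ that the cylinder forces, and verify axioms (An1$'$) and (An2$'$) from Lemma~\ref{lem:altaxioms}. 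Concretely, this amounts to decomposing the corner maps $(I_0\otimes\partial\Delta[n])\cup(\{\varepsilon\}\otimes\Delta[n])\hookrightarrow I_0\otimes\Delta[n]$, and the maps $(I_0\otimes A)\cup(\partial I_0\otimes B)\hookrightarrow I_0\otimes B$ for $A\hookrightarrow B$ in $\Lambda_{\bV}$, into transfinite composites of pushouts of iso-horn inclusions, in the style of Lemmas~\ref{lem:an1satisfied} and~\ref{lem:an2fortherest}. The new subtlety is that an isoplex carries a genuinely inverted edge, so the attaching order must be organized so that at each step the newly filled simplex is a face of an honest isoplex whose inverted edge lands on an edge that is already invertible in the simplicial set built so far; here one exploits that the $d_i$ and $d_{i+1}$ faces of $\tri_i[n]$ are ordinary simplices while all other faces are lower-dimensional isoplexes.

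For the second half, one shows $\mathcal{M}_{\bV}$ is minimal --- equivalently, that every Cisinski model structure $\mathcal{N}$ is a localization of it. By Remark~\ref{rmk:MSdeterminedbyfibobj} it suffices to show that each iso-horn inclusion $\bV_i[n]\hookrightarrow\tri_i[n]$ is a weak equivalence (hence, being a monomorphism, a trivial cofibration) in every Cisinski model structure $\mathcal{N}$. The key observation is that the iso-horn $\bV_i[n]$ \emph{already} contains the data exhibiting its $i\to(i+1)$ edge as invertible --- for $n\geq 3$ an entire copy of $J$, and for $n=2$ the copy of $J$ sitting on the $d_2$ face of $\tri_i[2]$ --- so that the inclusion behaves like a special outer horn inclusion with an honestly invertible edge rather than like ``gluing in contractible stuff.'' Since $J\to\ast$ has the right lifting property against all monomorphisms, it is a trivial fibration, and $\{0\}\hookrightarrow J$ is a trivial cofibration, in $\mathcal{N}$. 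Using this, together with left properness (Lemma~\ref{lem:leftproper}) and two-out-of-three, I would argue by induction on $n$: feeding in the lower iso-horn inclusions as the inductive hypothesis and realizing $\bV_i[n]\hookrightarrow\tri_i[n]$ as a retract of a composite of pushouts of those maps and of $\{0\}\hookrightarrow J$ --- in the manner of Remark~\ref{rmk:retractargument} and Proposition~\ref{prop:retractargument} --- gives that $\bV_i[n]\hookrightarrow\tri_i[n]$ is a weak equivalence in $\mathcal{N}$. It is essential that no appeal to inner horn lifting enters here: inner horn inclusions are \emph{not} weak equivalences in the minimal model structure, so everything must be reduced to the universally available contractibility of $J$.

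I expect the main obstacle to be the combinatorial heart of the first half: exhibiting an explicit, provably exhaustive scheme for filling the cylinder's corner maps by iso-horn pushouts. In Lemma~\ref{lem:an1satisfied} it was enough to attach each prism simplex along an ordinary horn together with a single designated degenerate-or-$I$-edge; here the attaching simplex must instead be a face of a full isoplex, so the scheme must track simultaneously the dimension of the isoplex, the position of its inverted edge, and the fact that this edge is already invertible at the relevant stage --- a considerably tighter bookkeeping problem than in Section~\ref{sec:behaved}. Once that combinatorial lemma is in place and the basic trivial cofibration $\{0\}\hookrightarrow J$ is available, the minimality argument should be a matter of careful but routine retract-and-two-out-of-three bookkeeping.
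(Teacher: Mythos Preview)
The paper does not prove this statement at all: Theorem~\ref{thm:isohornminimalfibrant} is quoted from the companion paper \cite{Feller:minimal} (``We can now state the main result of \cite{Feller:minimal}''), and no argument is given here. So there is no in-paper proof to compare your proposal against; you are attempting something the present paper explicitly outsources.

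That said, a brief comment on your plan. The overall architecture --- build a Cisinski model structure whose fibrant objects are characterized by iso-horn lifting, then show it is minimal by proving every iso-horn inclusion is a trivial cofibration in \emph{every} Cisinski model structure --- is the natural strategy, and your instinct that the argument must bottom out in the universal contractibility of $J$ (and nothing stronger, in particular no inner horn lifting) is exactly right. The genuine work, as you anticipate, is the combinatorics of (An1$'$) and (An2$'$): unlike the augmented-horn case, the codomain $\tri_i[n]$ is infinite, so the filling scheme cannot be a finite list of horn pushouts but must be organized as a transfinite (in practice countable) composite, with careful tracking of which faces of each isoplex are already present. Your sketch does not yet indicate a concrete choice of cylinder $I_0\otimes-$ or a filling order, and that is where the substance lies; the retract-style minimality argument in the second half is more routine once one has a good handle on how isoplexes sit inside products with $J$. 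If you want to carry this out rather than cite it, you will need to consult or reproduce the arguments of \cite{Feller:minimal}.
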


The takeaway of this theorem is that, from a certain perspective, isoplexes are the fundamental building blocks of homotopies in the minimal model structure, and hence in any Cisinski model structure. They are inherently equipped with all the higher invertibility data we want from a good notion of homotopy.

To understand what is happening when we localize to a homotopically-behaved model structure, consider the diagram
\[
\begin{tikzcd}
	{\Delta[n]^J_{i\to i+1}} & {\tri_i[n]} \\
	& {\Delta[n-1]}\nospace{.}
	\arrow[hook, from=1-1, to=1-2]
	\arrow[from=1-1, to=2-2]
	\arrow["\sim", from=1-2, to=2-2]
\end{tikzcd}
\]
Since the map on the right is a weak equivalence in any Cisinski model structure, by localizing with respect to the maps on the left to yield a homotopically-behaved model structure, we are equivalently localizing with respect to the horizontal maps. These maps being weak equivalences is effectively saying that ``$n$-simplices with a $J$-edge along $i\to (i+1)$ extend to full-fledged homotopies of $(n-1)$-simplices.'' In other words, we are justified in considering a $n$-simplex with a $J$-edge in its spine to be a ``homotopy'' since all of the higher invertibility data comes along for free.

We conclude by reviewing the broader picture of Cisinski model structures on $\sSet$ that localize to the Joyal model structure. Figure \ref{fig:MSno2seg} shows the model structures discussed in this paper, with an arrow drawn to indicate that the target is a localization of the source. (The minimal model structure is denoted by $\Min(\varnothing)$, and its localization with respect to $K\to \ast$ by $\Min(K\to \ast)$. The minimal and $K$-minimal homotopically-behaved model structures are depicted in the second column.)
\begin{figure}[h]
\caption{}
\label{fig:MSno2seg}
\vspace{3mm}
\begin{tikzcd}
\Min(\varnothing) \arrow[d] \arrow[r] & \Min(\text{HtpyBehaved}) \arrow[d] &                   &       \\
\Min(K\to \ast) \arrow[r]             & \Min(K,\text{HtpyBehaved}) \arrow[r]             & \text{Special} \arrow[r] & \text{Joyal}
\end{tikzcd}
\end{figure}
By Remark \ref{rmk:specialdifferentMS}, the two localizations on the right are nontrivial. The vertical arrows indicate nontrivial localizations because $K\to \ast$ is not a weak equivalence in the two upper model structures by Remark \ref{rmk:differentMSminKbehaved}. The upper horizontal arrow indicates a nontrivial localization because the simplicial set $\Delta[2]^{\ast}_{0\to 1}$ is fibrant in the minimal model structure but not in the minimal homotopically-behaved model structure. The remaining localization is nontrivial by the following lemma.

\begin{lem}
Given a simplicial set $I$ with exactly two vertices $a$ and $b$ and with 1-simplices $a\to b$ and $b\to a$, the localization of the minimal model structure at the map $I\to \ast$ is not homotopically-behaved.
\end{lem}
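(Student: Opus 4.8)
The plan is to exhibit a single simplicial set that is fibrant in $\mcM_I$ (the localization of the minimal model structure at $I\to\ast$) yet detects a failure of homotopical behavedness, and the candidate is $W:=\Delta[2]^{\ast}_{0\to1}$ itself, the pinched $2$-simplex. This $W$ has two vertices, say $v_0$ (the image of $0=1$) and $v_2$; two non-degenerate edges $\alpha,\beta\colon v_0\to v_2$ (the images of the $02$- and $12$-edges); and one non-degenerate $2$-simplex $\sigma$, with $d_2\sigma$ degenerate, $d_1\sigma=\alpha$, $d_0\sigma=\beta$. By the definition of homotopically-behaved it suffices to show that the surjection $p\colon\Delta[2]^{\ast}_{0\to1}\to\Delta[1]$ (one of the maps $\Delta[m+1]^{\ast}_{k\to k+1}\to\Delta[m]$, here $m=1$, $k=0$) is not a weak equivalence in $\mcM_I$. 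By the standard characterization of weak equivalences via homotopy classes of maps into fibrant objects (cf.\ Theorem~\ref{thm:cisinskiMS}), it is enough to check that $W$ is fibrant in $\mcM_I$ and that precomposition $p^{\ast}\colon[\Delta[1],W]\to[\Delta[2]^{\ast}_{0\to1},W]$ fails to be surjective.

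\textbf{Fibrancy of $W$ in $\mcM_I$.} The key observation is a rigidity statement: \emph{every map $I\to W$ is constant}, and likewise every map $J\to W$. Indeed, the only non-degenerate edges of $W$ run from $v_0$ to $v_2$, so $W$ has no edge $v_2\to v_0$; since $I$ has exactly two vertices with $1$-simplices in both directions, any map $I\to W$ must collapse both vertices of $I$ to a single vertex of $W$, after which every simplex of $I$ is forced onto a degenerate simplex of $W$ (the only $2$-simplex of $W$ with all vertices equal is totally degenerate, and similarly in higher degrees). Consequently $W$ is fibrant in $\mcM_I$: it is fibrant in the minimal model structure (recorded in the discussion of Figure~\ref{fig:MSno2seg}), and it is local with respect to the localizing map $I\to\ast$ because the constant-map comparison $\Hom(\ast,W)\to\Hom(I,W)$ is already a bijection before passing to homotopy classes.

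\textbf{$p^{\ast}$ is not surjective.} A map $\Delta[2]^{\ast}_{0\to1}\to W$ is a $2$-simplex of $W$ with degenerate $0\to1$ edge, and for an edge $e$ of $W$ one has $p^{\ast}[e]=[s_0e]$, the ``front-degenerate'' $2$-simplex whose $d_1$ and $d_0$ both equal $e$. Hence if $[\id_W]$ — which corresponds to $\sigma$ — lay in the image of $p^{\ast}$, then $\sigma$ would be $\mcM_I$-homotopic to some $s_0e$, and restricting along the two sections $\Delta[1]\hookrightarrow\Delta[2]^{\ast}_{0\to1}$ of $p$ (restriction preserves homotopy) would yield $\alpha=d_1\sigma\simeq e\simeq d_0\sigma=\beta$. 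So it is enough to show $\alpha$ and $\beta$ are \emph{not} homotopic as maps $\Delta[1]\to W$ in $\mcM_I$. I would do this with the cylinder $\Delta[1]\times J$: its projection to $\Delta[1]$ has the right lifting property against all monomorphisms (since $J\to\ast$ does, as observed in the proof of Corollary~\ref{cor:minimalhtpicalimpliesaughorns}), hence is a trivial fibration in every Cisinski model structure, and together with $\Delta[1]\sqcup\Delta[1]\hookrightarrow\Delta[1]\times J$ picked out by the two vertices of $J$ this is a good cylinder object for $\Delta[1]$; so for the fibrant object $W$, homotopy via it is the genuine left-homotopy relation. Given a putative homotopy $H\colon\Delta[1]\times J\to W$ whose two restrictions are $\alpha$ and $\beta$, restrict $H$ to the square $\Delta[1]\times\{u\}\cong\Delta[1]\times\Delta[1]$ over a non-degenerate edge $u$ of $J$; the two edges lying over the endpoints of $u$ sit inside copies of $J$ and so are sent to degenerate edges, forcing each of the two triangles of the square onto one of the few $2$-simplices of $W$ with a degenerate face. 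Chasing this along the edge of $J$ running from the vertex carrying $\beta$ back to the vertex carrying $\alpha$ (recall $J$, being the nerve of the free-living isomorphism, has a non-degenerate edge in each direction) forces a $2$-simplex of $W$ with degenerate $0\to1$ edge, $d_1=\beta$ and $d_0=\alpha$; but $\sigma$ is the only non-degenerate $2$-simplex of $W$ and it has $d_1\sigma=\alpha$, $d_0\sigma=\beta$, so no such $2$-simplex exists. This contradiction gives $\alpha\not\simeq\beta$ in $\mcM_I$, hence $p^{\ast}$ is not surjective, $p$ is not an $\mcM_I$-weak equivalence, and $\mcM_I$ is not homotopically-behaved.

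\textbf{Expected main obstacle.} The delicate step is the middle one: justifying cleanly that $\Delta[2]^{\ast}_{0\to1}$ really is fibrant in the localization $\mcM_I$ and that the homotopy relation there is computed by the concrete cylinder $\Delta[1]\times J$. Both rest on the rigidity observation (every map out of $I$, and out of $J$, into $\Delta[2]^{\ast}_{0\to1}$ is constant) together with the standard facts that for a cofibrant source and fibrant target left homotopy is a cylinder-independent equivalence relation compatible with composition; the final square-chase is routine but must be carried out with care about the orientations of the two non-degenerate edges of $J$.
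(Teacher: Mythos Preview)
Your overall strategy is the same as the paper's---show that $W:=\Delta[2]^{\ast}_{0\to 1}$ is fibrant in $\mcM_I$ and then deduce that $\mcM_I$ is not homotopically-behaved---but your fibrancy step has a genuine gap. You argue that $W$ is $I$-local because $\Hom(\ast,W)\to\Hom(I,W)$ is a bijection of sets. That is only a $\pi_0$ statement; Bousfield locality requires the comparison of homotopy function complexes $\operatorname{RMap}(\ast,W)\to\operatorname{RMap}(I,W)$ to be a weak equivalence, not just a bijection on components. Concretely, your rigidity observation handles only the $n=0$ case of the lifting problems that characterise fibrant objects in $\mcM_I$: one still has to extend maps $(I\times\partial\Delta[n])\cup(\{v\}\times\Delta[n])\to W$ over $I\times\Delta[n]$ for all $n\geq 1$, and for $n\geq 1$ such maps need not factor through the collapse $I\times\Delta[n]\to\Delta[n]$ (for instance, with the minimal $I$, one can send $(a,0)\!\to\!(a,1)$ to $\alpha$ and $(b,0)\!\to\!(b,1)$ to $\beta$). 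The paper closes this gap by invoking the explicit lifting characterisation of $\mcM_I$-fibrant objects from \cite{Feller:minimal} and then arguing, using the bidirectionality of edges in $I$, that any such map either factors through the collapse to $\Delta[n]$ or lands in a single $\Delta[1]\hookrightarrow W$; in either case the lift is immediate.

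Once fibrancy is established, the two arguments diverge in style. The paper finishes in one line: since $W$ is not fibrant in any homotopically-behaved model structure, $\mcM_I$ cannot be homotopically-behaved. Your route---computing $[\Delta[1],W]$ via the cylinder $\Delta[1]\times J$ and showing $\alpha\not\simeq\beta$ by chasing the square over the ``backwards'' edge of $J$---is correct and more hands-on, but it still presupposes fibrancy of $W$ (to know that left homotopy via any good cylinder computes the genuine homotopy relation). So both routes ultimately hinge on the fibrancy step you left incomplete; your own ``expected main obstacle'' diagnosis is right, but the rigidity observation alone does not resolve it.
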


\begin{proof}
Since the map $\Delta[2]^{\ast}_{0\to 1}\to \ast$ is a weak equivalence but not a trivial fibration in a homotopically-behaved model structure, it cannot be a fibration. So, it suffices to show that $\Delta[2]^{\ast}_{0\to 1}$ is fibrant in the minimal model structure localized at $I\to \ast$. A similar argument as in Section 3 of \cite{Feller:minimal} shows that a simplicial set is fibrant in this model structure if and only if it has lifts with respect to all maps in the set
\[
\mcA_I=\{(I\times \partial \Delta[n])\cup(\{v\}\times \Delta[n])\hookrightarrow I\times \Delta[n]\}_{v\in\{a,b\}, n\geq 0}.
\]
We therefore see that $\Delta[2]^{\ast}_{0\to 1}$ is fibrant by observing that any map
\[
(I\times \partial \Delta[n])\cup(\{v\}\times \Delta[n])\to \Delta[2]^{\ast}_{0\to 1}
\]
either factors through the collapse map $(I\times \partial \Delta[n])\cup(\{v\}\times \Delta[n])\to \Delta[n]$ or factors through $\Delta[1]\hookrightarrow \Delta[2]^{\ast}_{0\to 1}$ because the 1-simplices of $I$ go in opposite directions.
\end{proof}

In Figure \ref{fig:MSwith2seg}, we indicate how we expect the 2-Segal analogue of these model structures to fit in.
\begin{figure}[h]
\caption{}
\label{fig:MSwith2seg}
\vspace{3mm}
\begin{tikzcd}[column sep=small]
\Min(\varnothing) \arrow[rr] \arrow[d] &  & \Min(\text{HtpyBehaved}) \arrow[d]             &  &                                    &  &                       \\
\Min(K'\to \ast) \arrow[rr] \arrow[d]  &  & \Min(K',\text{HtpyBehaved}) \arrow[d] \arrow[rr] &  & \text{2-Special} \arrow[d] \arrow[rr] &  & \text{Quasi-2-Seg} \arrow[d] \\
\Min(K\to \ast) \arrow[rr]             &  & \Min(K,\text{HtpyBehaved}) \arrow[rr]            &  & \text{Special} \arrow[rr] & & \text{Joyal}                
\end{tikzcd}
\end{figure}
In particular, it appears likely that there is a separate simplicial set $K'$ that plays the same role for the 2-Segal situation as $K$ does for quasi-categories. It also seems likely that there be a notion of 2-Segal pre-isomorphism that is distinct from categorical pre-isomorphisms, and hence a notion of 2-special horns that is distinct from that of special horns. These model structures are the topic of future work.

\end{document}